\numberwithin{equation}{section}
\theoremstyle{plain}
\newtheorem{theorem}{Theorem}[section]
\newtheorem{lemma}[theorem]{Lemma}
\theoremstyle{definition}
\numberwithin{equation}{section}
\newcommand{\HT}{\CJKfamily{hei}}
\newcommand{\R}{\mathbb{R}}
\def\be{\begin{equation}}
\def\ee{\end{equation}}
\def\bea{\begin{eqnarray}}
\def\eea{\end{eqnarray}}
\def\bes{\begin{eqnarray*}}
\def\ees{\end{eqnarray*}}
\def\y{\begin{eqnarray*}}
\def\ey{\end{eqnarray*}}
\begin{document}
\title{\HT {Singular Trudinger-Moser inequality involving $L^{p}$ norm in bounded domain}}
\author{\small {Kaiwen Guo$^a$, Yanjun Liu$^b$ }\\
\small $^a$Chern Institute of Mathematics, Nankai University, Tianjin  300071,  P. R. China\\
\small $^b$School of Mathematical Sciences, Chongqing Normal University,\\
\small Chongqing 401331, P. R. China}
\date{}
\maketitle

\date{}
\maketitle
\footnote[0]{This work is supported by the National Natural Science Foundation of China(No. 12201089, No. 12371051), the Natural Science Foundation Project of Chongqing(No. CSTB2022NSCQ-MSX0226), the Science and Technology Research Program of Chongqing Municipal Education Commission(No. KJQN202200513), the Innovation projects for studying abroad and returning to China(No. cx2023097) and Chongqing Normal University Foundation(No. 21XLB039).}\\
\footnote[0]{~~~~~~Kaiwen Guo}
\footnote[0]{~~~~~~gkw17853142261@163.com }
\footnote[0]{\Letter ~ Yanjun Liu}
\footnote[0]{~~~~~~liuyj@mail.nankai.edu.cn }

\noindent{\small
{\bf Abstract:} In this paper, we use the methods of blow-up analysis and capacity estimate to derive singular Trudinger-Moser inequality involving $N$-Finsler-Laplacian and $L^{p}$ norm in bounded domain, precisely, for any $p>1$, $0\leq\gamma<\gamma_{1}:=\inf\limits_{u\in W_{0}^{1,N}(\Omega)\backslash\{0\}}\frac{\int_{\Omega}F(\nabla u)^{N}\;\mathrm{d}x}{\|u\|_{p}^{N}}$ and $0\leq\beta<N$, we  have
\begin{align}
    \sup_{u\in W_{0}^{1,N}(\Omega),\;\int_{\Omega}F(\nabla u)^{N}\;\mathrm{d}x-\gamma\| u\|_{p}^{N}\leq1}\int_{\Omega}\frac{e^{\lambda_{N}(1-\frac{\beta}{N})\lvert u\rvert^{\frac{N}{N-1}}}}{F^{o}(x)^{\beta}}\;\mathrm{d}x<+\infty\notag
    \end{align}
where $\lambda_{N}=N^{\frac{N}{N-1}} \kappa_{N}^{\frac{1}{N-1}}$ and $\kappa_{N}$ is the volume of unit Wulff ball. Moreover, the extremal functions for the inequality are also obtained. When $F=\lvert\cdot\rvert$ and $p=N$, we can obtain the singular version of Tintarev type inequality, namely, for any $0\leq\alpha<\alpha_{1}(\Omega):=\inf\limits_{u\in W_{0}^{1,N}(\Omega)\backslash \{0\}}\frac{\int_{\Omega}\lvert\nabla u\rvert^{N}\;\mathrm{d}x}{\|u\|_{N}^{N}}$ and $0\leq\beta<N$, there holds
$$
\sup_{u\in W_{0}^{1,N}(\Omega),\;\int_{\Omega}\lvert\nabla u\rvert^{N}\;\mathrm{d}x-\alpha\|u\|_{N}^{N}\leq1}\int_{\Omega}\frac{e^{\alpha_{N}(1-\frac{\beta}{N})\lvert u\rvert^{\frac{N}{N-1}}}}{\lvert x\rvert^{\beta}}\;\mathrm{d}x<+\infty,
$$
where $\alpha_{N}:=N^{\frac{N}{N-1}}\omega_{N}^{\frac{1}{N-1}}$ and $ \omega_{N}$ is the volume of unit ball.\\
\noindent{\bf Keywords:}  Singularity  and  anisotropy; Trudinger-Moser inequality; Extremal functions; Tintarev type inequality; Blow-up analysis

\noindent{\bf MSC2010:} 26D10, 35J70, 46E35

\section{Introduction and main results}\label{section1}
As well as we know, in the limiting case, the Sobolev embeddings are replaced by the Trudinger-Moser inequality. Let $\Omega$ be a bounded domain in $\R^{N}$ where $N\geq2$, Trudinger \cite{25}(see also Yudovi\v{c} \cite{12}) proved $W^{1,N}_{0}(\Omega)$ is embedded in the Orlicz space $L_{\varphi_{\alpha}}(\Omega)$ which is determined by Young function $\varphi_{\alpha}(t)=e^{\alpha|t|^{\frac{N}{N-1}}}-1$ for some positive number $\alpha$. In particular,  Moser \cite{19} obtained the sharp constant  $\alpha_{N}=N^{\frac{N}{N-1}}\omega_{N}^{\frac{1}{N-1}}$ such that
\begin{equation}\label{eq1.1}
\sup_{u\in W_0^{1,N}(\Omega),\;\|\nabla u\|_{N}\leq1}\int_{\Omega}e^{\alpha_{N}|u|^{\frac{N}{N-1}}}dx<+\infty,
\end{equation}
where $\omega_{N}$ is the volume of
unit ball in $\mathbb{R}^{N}$.

One important problem on Trudinger-Moser inequality is whether extremal functions exist or not. The existence of extremal functions of inequality \eqref{eq1.1} was firstly obtained by Carleson-Chang \cite{5} when $\Omega$ is the unit ball, then obtained by Struwe \cite{22} when $\Omega$ is close to the unit ball in the sense of measure, finally obtained by Flucher \cite{11} and Lin \cite{14} when $\Omega$ is a general smooth bounded domain.

There are several other related expansions.

When $\Omega$ contains the origin, Adimurthi-Sandeep \cite{2} generalized the Trudinger-Moser inequality to singular version, namely, for any $0\leq\beta<N$,
\begin{align}\label{eq1.2}
    \sup\limits_{u\in W_{0}^{1,N}(\Omega),\;\|\nabla u\|_{N}\leq 1}
    \int_{\Omega}\frac{e^{\alpha_{N}(1-\frac{\beta}{N})\lvert u\rvert^{\frac{N}{N-1}}}}{\lvert x\rvert^{\beta}}\;\mathrm{d}x<+\infty.
\end{align}
Futhermore, when $\Omega$ is smooth, it was proven in Csato-Nguyen-Roy \cite{7} that the supremum can be attained.

Another version was established by Tintarev \cite{23}, namely,
\begin{align}
    \sup_{u\in W_{0}^{1,2}(\Omega),\;\int_{\Omega}\lvert\nabla u\rvert^{2}\;\mathrm{d}x-\alpha\|u\|_{2}^{2}\leq1}\int_{\Omega}e^{4\pi u^{2}}\;\mathrm{d}x<+\infty\;\;\mathrm{for}\;\;0\leq\alpha<\alpha(\Omega),\notag
\end{align}
where $\alpha(\Omega)$ is the first eigenvalue of Laplacian. Yang \cite{31} obtained extremal functions for the inequality, then Nguyen \cite{20} extended the results of Tintarev and Yang to the higher dimension as follows:
\begin{align}\label{eq1.3}
    \sup_{u\in W_{0}^{1,N}(\Omega),\;\int_{\Omega}\lvert\nabla u\rvert^{N}\;\mathrm{d}x-\alpha\|u\|_{N}^{N}\leq1}\int_{\Omega}e^{\alpha_{N}\lvert u\rvert^{\frac{N}{N-1}}}\;\mathrm{d}x<+\infty\;\;\mathrm{for}\;\;0\leq\alpha<\alpha_{1}(\Omega),
\end{align}
where $\alpha_{1}(\Omega)=\inf\limits_{u\in W_{0}^{1,N}(\Omega)\backslash\{0\}}\frac{\int_{\Omega}\lvert\nabla u\rvert^{N}\;\mathrm{d}x}{\|u\|_{N}^{N}}$, moreover, the supremum can be attained.

In addition, Wang-Xia \cite{27} investigated the Trudinger-Moser inequality involving the anisotropic Dirichlet norm $\left(\int_{\Omega}F(\nabla u)^{N}\;\mathrm{d}x\right)^{\frac{1}{N}}$, precisely,
\begin{align}\label{eq1.4}
    \sup\limits_{u\in W_{0}^{1,N}(\Omega),\;\|F(\nabla u)\|_{N}\leq 1}
    \int_{\Omega}e^{\lambda_{N}\lvert u\rvert^{\frac{N}{N-1}}}\;\mathrm{d}x<+\infty,
\end{align}
where $\lambda_{N}=N^{\frac{N}{N-1}}\kappa_{N}^{\frac{1}{N-1}}$ and $\kappa_{N}$ is as described below. Then Zhou-Zhou \cite{33} obtained the existence of extremal functions for the inequality.

Combining \eqref{eq1.3} and \eqref{eq1.4}, Liu \cite{17} established anisotropic Trudinger-Moser inequality involving $L^{p}$ norm as follows:
\begin{align}\label{eq1.5}
    \sup\limits_{u\in W_{0}^{1,N}(\Omega),\;\int_{\Omega}F(\nabla u)^{N}\;\mathrm{d}x-\gamma\|u\|_{p}^{N}\leq 1}
    \int_{\Omega}e^{\lambda_{N}\lvert u\rvert^{\frac{N}{N-1}}}\;\mathrm{d}x<+\infty\;\;\mathrm{for}\;\;0\leq\gamma<\gamma_{1},
\end{align}
where $\gamma_{1}$ is as described below, and the
existence of extremal functions for the inequality.

Now, combining \eqref{eq1.2} and \eqref{eq1.5}, we investigate anisotropic singular Trudinger-Moser inequality involving $L^{p}$ norm in bounded domain and the existence of extremal functions.

Throughout this paper, let $\Omega$ be a smooth bounded domain containing the origin in $\R^{N}$ with $N\geq2$. Denote
$$
\gamma_{1}=\inf_{u\in W_{0}^{1,N}(\Omega)\backslash\{0\}}\frac{\int_{\Omega}F(\nabla u)^{N}\;\mathrm{d}x}{\|u\|_{p}^{N}}
$$
and
$$
\|u\|_{\gamma,p,F}=\left(\int_{\Omega}F(\nabla u)^{N}\;\mathrm{d}x-\gamma\|u\|_{p}^{N}\right)^{\frac{1}{N}}.
$$

\begin{theorem}\label{th1.1}
For any $0\leq\gamma<\gamma_{1}$, $0\leq\beta<N$ and $p>1$, there holds
\begin{equation}\label{eq1.6}
    \Lambda:=\sup_{u\in W_{0}^{1,N}(\Omega),\;\|u\|_{\gamma,p,F}\leq1}\int_{\Omega}\frac{e^{\lambda_{N}(1-\frac{\beta}{N})\lvert u\rvert^{\frac{N}{N-1}}}}{F^{o}(x)^{\beta}}\;\mathrm{d}x<+\infty.
\end{equation}
\end{theorem}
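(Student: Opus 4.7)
The plan is to combine an anisotropic convex symmetrization with a Lions-type concentration-compactness argument for the singular weighted exponential integral, and then to isolate the critical concentration regime for a finer analysis.

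First, I would apply the convex (Wulff) symmetrization of Alvino-Ferone-Lions-Trombetti to replace $\Omega$ by the Wulff ball $\Omega^{\star}=\{F^{o}\le R\}$ of equal measure and $u$ by its anisotropic decreasing rearrangement. This preserves $\|u\|_{p}$, does not increase $\int_{\Omega}F(\nabla u)^{N}\,\mathrm{d}x$, and does not decrease $\int_{\Omega}e^{\lambda_{N}(1-\beta/N)|u|^{N/(N-1)}}F^{o}(x)^{-\beta}\,\mathrm{d}x$ (by a Hardy-Littlewood type inequality, since $F^{o}(x)^{-\beta}$ is decreasing in the Wulff sense). The a priori bound $(\gamma_{1}-\gamma)\|u\|_{p}^{N}\le\|u\|_{\gamma,p,F}^{N}\le 1$ then gives $\|u\|_{p}$ and $\int F(\nabla u)^{N}\,\mathrm{d}x$ uniformly controlled.

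Arguing by contradiction, assume $\Lambda=+\infty$ and take a sequence $u_{k}$ with $\|u_{k}\|_{\gamma,p,F}\le 1$ whose exponential integral diverges. Passing to a subsequence, $u_{k}\rightharpoonup u_{0}$ weakly in $W_{0}^{1,N}(\Omega)$ and $u_{k}\to u_{0}$ in $L^{p}(\Omega)$ and a.e. In the case $u_{0}\ne 0$ I would normalize $v_{k}=u_{k}/M_{k}^{1/N}$ with $M_{k}=\int F(\nabla u_{k})^{N}\,\mathrm{d}x$, so that $\int F(\nabla v_{k})^{N}\,\mathrm{d}x = 1$ and $v_{k}\rightharpoonup v_{0}\ne 0$, and apply a singular anisotropic version of P.-L.~Lions' concentration-compactness principle: for any $q$ satisfying $q^{N-1}<(1-\|F(\nabla v_{0})\|_{N}^{N})^{-1}$,
\[
\sup_{k}\int_{\Omega}\frac{e^{q\lambda_{N}(1-\beta/N)|v_{k}|^{N/(N-1)}}}{F^{o}(x)^{\beta}}\,\mathrm{d}x<\infty.
\]
The strict inequality $\gamma<\gamma_{1}$, together with $\|F(\nabla u_{0})\|_{N}^{N}\ge\gamma_{1}\|u_{0}\|_{p}^{N}$, gives $\lim M_{k}-\|F(\nabla u_{0})\|_{N}^{N}\le 1-(\gamma_{1}-\gamma)\|u_{0}\|_{p}^{N}<1$, which places $M_{k}^{1/(N-1)}$ strictly below the Lions threshold and yields a uniform bound contradicting divergence.

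When $u_{0}=0$, one has $\|u_{k}\|_{p}\to 0$ and therefore $M_{k}\le 1+\gamma\|u_{k}\|_{p}^{N}\to 1$. If $\limsup M_{k}<1$ the bound follows immediately from the $\gamma=0$ singular anisotropic Trudinger-Moser inequality, obtainable by combining \eqref{eq1.2} with the arguments behind \eqref{eq1.4}. The critical sub-case $M_{k}\to 1$ is the principal obstacle: via the Moser substitution $r=Re^{-t/N}$, $\phi(t)=(N\kappa_{N})^{1/N}U(r)$ the problem reduces to the one-dimensional inequality
\[
\sup\int_{0}^{\infty}e^{\frac{N-\beta}{N}(|\phi|^{N/(N-1)}-t)}\,\mathrm{d}t<\infty
\]
subject to $\int_{0}^{\infty}|\phi'|^{N}\,\mathrm{d}t-\tilde\gamma\bigl(\int_{0}^{\infty}|\phi|^{p}e^{-t}\,\mathrm{d}t\bigr)^{N/p}\le 1$, which I would attack by a Carleson-Chang type estimate combined with a capacity/blow-up analysis at the origin that absorbs the $L^{p}$-perturbation. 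This critical concentration argument, adapting the strategy used by Liu for \eqref{eq1.5} but with the singular weight incorporated throughout, is the heart of the proof.
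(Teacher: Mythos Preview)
Your outline diverges from the paper's route: you argue by contradiction on an arbitrary diverging sequence and sift cases via a Lions alternative, whereas the paper works with maximizers $u_{\epsilon}$ of the subcritical functionals $\Lambda_{\epsilon}$ (exponent $\lambda_{N}(1-\tfrac{\beta}{N}-\epsilon)$), shows $\Lambda_{\epsilon}\to\Lambda$, and then performs the blow-up/capacity analysis on these specific $u_{\epsilon}$, which satisfy an explicit Euler--Lagrange equation.

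The genuine gap lies in your critical sub-case $u_{0}=0$, $M_{k}\to 1$. The Carleson--Chang lemma controls $\int_{0}^{\infty} e^{(1-\beta/N)(|\phi|^{N/(N-1)}-t)}\,\mathrm{d}t$ under the constraint $\int|\phi'|^{N}\,\mathrm{d}t\le 1$, but your constraint is $\int|\phi'|^{N}\,\mathrm{d}t\le 1+\gamma\|u_{k}\|_{p}^{N}$; after normalizing, the exponent becomes $(1+\gamma\|u_{k}\|_{p}^{N})^{1/(N-1)}|\tilde\phi|^{N/(N-1)}$, which is slightly supercritical. Whether the product $\gamma\|u_{k}\|_{p}^{N}\cdot c_{k}^{N/(N-1)}$ stays bounded along a concentrating sequence is exactly the Adimurthi--Druet obstruction, and a bare Carleson--Chang estimate does not resolve it. The ``capacity/blow-up analysis'' you invoke (following Liu for \eqref{eq1.5}) is not available for an arbitrary sequence: the identification of the bubble $\Phi$, the energy-splitting lemma (the paper's Lemma~\ref{le4.3}), the Green-function limit (Lemma~\ref{le4.5}), and the capacity estimate (Lemma~\ref{le4.6}) all rely essentially on the fact that $u_{\epsilon}$ solves a PDE, which supplies elliptic regularity, comparison, and the precise asymptotics needed. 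Your $u_{k}$ carry no equation, so none of this machinery applies.

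A secondary issue: Wulff symmetrization maps $\Omega$ to $\Omega^{\star}$ with $\gamma_{1}(\Omega^{\star})\le\gamma_{1}(\Omega)$ (the Rayleigh quotient can only drop under rearrangement), so for $\gamma$ close to $\gamma_{1}(\Omega)$ one may have $\gamma\ge\gamma_{1}(\Omega^{\star})$. Then $\|\cdot\|_{\gamma,p,F}$ is no longer coercive on $W_{0}^{1,N}(\Omega^{\star})$ and the one-dimensional inequality you write down is not well posed as stated. This can be patched by carrying the a priori bound on $\|u_{k}\|_{p}$ from the original domain, but it shows the symmetrization step is not as clean as suggested.
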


One corollary of Theorem \ref{th1.1} is the following Tintarev type inequality.
\begin{theorem}\label{th1.2}
For any $0\leq\alpha<\alpha_{1}(\Omega)$, $0\leq\beta<N$, there holds
\begin{equation}\label{eq1.7}
    \sup_{u\in W_{0}^{1,N}(\Omega),\;\int_{\Omega}\lvert\nabla u\rvert^{N}\;\mathrm{d}x-\alpha\|u\|_{N}^{N}\leq1}\int_{\Omega}\frac{e^{\alpha_{N}(1-\frac{\beta}{N})\lvert u\rvert^{\frac{N}{N-1}}}}{\lvert x\rvert^{\beta}}\;\mathrm{d}x<+\infty.
\end{equation}
\end{theorem}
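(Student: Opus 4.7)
The plan is to derive Theorem \ref{th1.2} as a direct specialization of Theorem \ref{th1.1}, by choosing the Finsler norm $F$ to be the standard Euclidean norm $|\cdot|$ on $\R^{N}$ and taking the Lebesgue exponent $p=N$. Once the correspondence of all quantities is verified, there is nothing further to prove.

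First I would spell out what happens to each ingredient of Theorem \ref{th1.1} under this choice. Since the Euclidean norm is self-dual, the polar (dual) Finsler norm satisfies $F^{o}(x)=|x|$, so the singular weight $F^{o}(x)^{-\beta}$ in \eqref{eq1.6} reduces to $|x|^{-\beta}$. The anisotropic Dirichlet integrand collapses to $F(\nabla u)^{N}=|\nabla u|^{N}$, and the unit Wulff ball coincides with the unit Euclidean ball, so $\kappa_{N}=\omega_{N}$ and consequently
\begin{equation*}
\lambda_{N}=N^{\frac{N}{N-1}}\kappa_{N}^{\frac{1}{N-1}}=N^{\frac{N}{N-1}}\omega_{N}^{\frac{1}{N-1}}=\alpha_{N}.
\end{equation*}
Next I would match the eigenvalue constraint: with $p=N$ the quantity $\gamma_{1}$ becomes exactly
\begin{equation*}
\gamma_{1}=\inf_{u\in W_{0}^{1,N}(\Omega)\setminus\{0\}}\frac{\int_{\Omega}|\nabla u|^{N}\,\mathrm{d}x}{\|u\|_{N}^{N}}=\alpha_{1}(\Omega),
\end{equation*}
so the admissible range $0\leq\gamma<\gamma_{1}$ of Theorem \ref{th1.1} is precisely $0\leq\alpha<\alpha_{1}(\Omega)$, and the normalization $\|u\|_{\gamma,p,F}\leq 1$ reduces to $\int_{\Omega}|\nabla u|^{N}\,\mathrm{d}x-\alpha\|u\|_{N}^{N}\leq 1$. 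The hypothesis $p>1$ from Theorem \ref{th1.1} is trivially satisfied since $N\geq 2$.

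With these identifications in place, the supremum on the left of \eqref{eq1.7} is nothing other than the supremum appearing in \eqref{eq1.6} for $F=|\cdot|$, $p=N$, and $\gamma=\alpha$. Applying Theorem \ref{th1.1} therefore yields
\begin{equation*}
\sup_{u\in W_{0}^{1,N}(\Omega),\;\int_{\Omega}|\nabla u|^{N}\,\mathrm{d}x-\alpha\|u\|_{N}^{N}\leq 1}\int_{\Omega}\frac{e^{\alpha_{N}(1-\frac{\beta}{N})|u|^{\frac{N}{N-1}}}}{|x|^{\beta}}\,\mathrm{d}x<+\infty,
\end{equation*}
which is exactly \eqref{eq1.7}. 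There is no essential obstacle to this reduction; the only thing that genuinely needs care is confirming that the Euclidean norm does satisfy the hypotheses imposed on $F$ in the statement of Theorem \ref{th1.1} (convexity, positive $1$-homogeneity, positivity away from the origin, and self-duality giving $F^{o}=|\cdot|$), all of which are standard. The real mathematical content lies in Theorem \ref{th1.1} itself; Theorem \ref{th1.2} is a corollary.
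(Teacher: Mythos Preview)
Your proposal is correct and matches the paper's approach exactly: the paper presents Theorem \ref{th1.2} as a corollary of Theorem \ref{th1.1} obtained by specializing to $F=|\cdot|$ and $p=N$, just as you do. The only hypotheses on $F$ you did not explicitly name are the $C^{2}(\R^{N}\setminus\{0\})$ regularity and the positive definiteness of $\mathrm{Hess}(F^{2})$, both of which are trivially satisfied by the Euclidean norm (since $|x|^{2}$ has constant Hessian $2I$), so there is no gap.
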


Another corollary of Theorem \ref{th1.1} is the Adimurthi-Druet type inequality.
\begin{theorem}\label{th1.3}
For any $0\leq\gamma<\gamma_{1}$, $0\leq\beta<N$ and $p>1$, there holds
\begin{equation}\label{eq1.8}
    \sup_{u\in W_{0}^{1,N}(\Omega),\;\int_{\Omega}F(\nabla u)^{N}\;\mathrm{d}x\leq1}\int_{\Omega}\frac{e^{\lambda_{N}(1-\frac{\beta}{N})\lvert u\rvert^{\frac{N}{N-1}}(1+\gamma\|u\|_{p}^{N})^{\frac{1}{N-1}}}}{F^{o}(x)^{\beta}}\;\mathrm{d}x<+\infty.
\end{equation}
\end{theorem}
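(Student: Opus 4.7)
The plan is to deduce Theorem \ref{th1.3} directly from Theorem \ref{th1.1} by a simple normalization. Given any test function $u\in W_0^{1,N}(\Omega)$ with $\int_\Omega F(\nabla u)^N\,\mathrm{d}x\leq 1$, set
$$
a=\int_\Omega F(\nabla u)^N\,\mathrm{d}x\leq 1,\qquad b=\gamma\|u\|_p^N\geq 0.
$$
Since $0\leq\gamma<\gamma_1$, the very definition of $\gamma_1$ forces $b\leq (\gamma/\gamma_1)a<a$, so $\|u\|_{\gamma,p,F}^N=a-b>0$ whenever $u\not\equiv 0$ (the zero case is trivial because $\beta<N$ makes $F^o(x)^{-\beta}$ integrable on $\Omega$). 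Then the rescaled function $v=u/\|u\|_{\gamma,p,F}$ satisfies $\|v\|_{\gamma,p,F}=1$, so Theorem \ref{th1.1} applies to $v$.

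The heart of the argument is the elementary inequality
$$
(1+b)(a-b)\leq 1\quad\text{whenever }0\leq b<a\leq 1.
$$
This is immediate since $h(b)=(1+b)(a-b)=a+(a-1)b-b^2$ has derivative $(a-1)-2b\leq 0$ on $[0,\infty)$ when $a\leq 1$, so $h$ is decreasing with $h(0)=a\leq 1$. Taking $(N-1)$-th roots after rearranging gives
$$
(1+b)^{\frac{1}{N-1}}\leq\frac{1}{(a-b)^{\frac{1}{N-1}}},
$$
and multiplying by $|u|^{N/(N-1)}$ yields the pointwise bound
$$
|u|^{\frac{N}{N-1}}\bigl(1+\gamma\|u\|_p^N\bigr)^{\frac{1}{N-1}}\;\leq\;\frac{|u|^{\frac{N}{N-1}}}{\|u\|_{\gamma,p,F}^{N/(N-1)}}\;=\;|v|^{\frac{N}{N-1}}.
$$

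Inserting this inequality into the exponential and applying Theorem \ref{th1.1} to $v$ gives
$$
\int_\Omega \frac{e^{\lambda_N(1-\frac{\beta}{N})|u|^{\frac{N}{N-1}}(1+\gamma\|u\|_p^N)^{\frac{1}{N-1}}}}{F^o(x)^\beta}\,\mathrm{d}x\;\leq\;\int_\Omega \frac{e^{\lambda_N(1-\frac{\beta}{N})|v|^{\frac{N}{N-1}}}}{F^o(x)^\beta}\,\mathrm{d}x\;\leq\;\Lambda,
$$
uniformly in $u$, which is exactly \eqref{eq1.8}. I do not expect a genuine obstacle here: the only non-routine step is spotting the scalar inequality $(1+b)(a-b)\leq 1$, and once that is in hand the theorem follows in a couple of lines from Theorem \ref{th1.1}.
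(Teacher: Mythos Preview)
Your proof is correct and is essentially the same as the paper's: both deduce Theorem~\ref{th1.3} from Theorem~\ref{th1.1} via the scalar inequality $(1+b)(a-b)\leq 1$ for $0\leq b<a\leq 1$, where $a=\int_\Omega F(\nabla u)^N\,\mathrm{d}x$ and $b=\gamma\|u\|_p^N$. The only cosmetic difference is that the paper rescales by setting $v=(1+\gamma\|u\|_p^N)^{1/N}u$ and uses the inequality to show $\|v\|_{\gamma,p,F}\leq 1$, whereas you rescale to $v=u/\|u\|_{\gamma,p,F}$ and use the inequality to bound the exponent; the content is identical.
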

In fact, if $u\in W_{0}^{1,N}(\Omega)$ and $\int_{\Omega}F(\nabla u)^{N}\;\mathrm{d}x\leq1$, we set $v=(1+\gamma\|u\|_{p}^{N})^{\frac{1}{N}}u\in W_{0}^{1,N}(\Omega)$. Thus
\begin{align}
    \|v\|_{\gamma,p,F}^{N}=(1+\gamma\|u\|_{p}^{N})(\int_{\Omega}F(\nabla u)^{N}\;\mathrm{d}x-\gamma\|u\|_{p}^{N})\leq1-\gamma^{2}\|u\|_{p}^{2N}\leq1.\notag
\end{align}
Applying Theorem \ref{th1.1} to function $v$, we can obtain Theorem \ref{th1.3}.

Theorem \ref{th1.3} generalizes the conclusions in Adimurthi-Druet \cite{1}, Lu-Yang \cite{18}, Yang \cite{30}, Zhu \cite{34}, Wang-Miao \cite{28} and Zhou \cite{32}.

Finally, we give the existence of extremal functions of inequality \eqref{eq1.6}.
\begin{theorem}\label{th1.4}
The supremum
$$
\Lambda=\sup_{u\in W_{0}^{1,N}(\Omega),\;\|u\|_{\gamma,p,F}\leq1}\int_{\Omega}\frac{e^{\lambda_{N}(1-\frac{\beta}{N})\lvert u\rvert^{\frac{N}{N-1}}}}{F^{o}(x)^{\beta}}\;\mathrm{d}x
$$
can be attained by $u_{0}\in W_{0}^{1,N}(\Omega)$ with $\|u_{0}\|_{\gamma,p,F}=1$.
\end{theorem}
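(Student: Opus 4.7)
The plan is to follow the now-standard blow-up strategy for Trudinger--Moser extremal problems, adapted to the anisotropic and singular setting with the $L^p$ subtraction. I first reduce to a family of subcritical problems whose extremals exist by compactness, then perform a blow-up analysis near the concentration point, and finally derive a contradiction between a capacity-type upper bound and a Moser-type lower bound obtained via test functions. This forces the subcritical extremals to stay uniformly bounded, which yields strong convergence to an extremal of \eqref{eq1.6}.

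\textbf{Step 1: Subcritical extremals.} For every $\epsilon\in(0,1)$, consider the modified functional
\begin{equation*}
J_{\epsilon}(u)=\int_{\Omega}\frac{e^{\lambda_{N}(1-\beta/N)(1-\epsilon)|u|^{N/(N-1)}}}{F^{o}(x)^{\beta}}\,\mathrm{d}x
\end{equation*}
subject to $\|u\|_{\gamma,p,F}\le 1$. Since the exponent is strictly subcritical, the Sobolev--Trudinger compact embedding (together with control by Theorem~\ref{th1.1}) makes the supremum $\Lambda_{\epsilon}$ attained by some $u_{\epsilon}\in W_{0}^{1,N}(\Omega)$ with $\|u_{\epsilon}\|_{\gamma,p,F}=1$. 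Writing the Euler--Lagrange equation for this constrained maximizer produces an anisotropic quasilinear PDE of the form $-Q_{N}u_{\epsilon}-\gamma\mu_{\epsilon}|u_{\epsilon}|^{p-2}u_{\epsilon}\|u_{\epsilon}\|_{p}^{N-p}=c_{\epsilon}^{-1}F^{o}(x)^{-\beta}u_{\epsilon}^{1/(N-1)}e^{\lambda_{N}(1-\beta/N)(1-\epsilon)|u_{\epsilon}|^{N/(N-1)}}$, where $Q_{N}u=\operatorname{div}(F^{N-1}(\nabla u)\nabla_{\xi}F(\nabla u))$ is the $N$-Finsler--Laplacian and the scalars $c_{\epsilon},\mu_{\epsilon}$ are Lagrange multipliers.

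\textbf{Step 2: Alternative.} Let $u_{0}$ be the weak $W^{1,N}$ limit of (a subsequence of) $u_{\epsilon}$ and let $c_{\epsilon}=\|u_{\epsilon}\|_{\infty}=|u_{\epsilon}(x_{\epsilon})|$. Standard arguments in anisotropic Trudinger--Moser theory (compare Zhou--Zhou~\cite{33} and Liu~\cite{17}) show that either (i) $c_{\epsilon}$ remains bounded, in which case elliptic regularity promotes the weak convergence $u_{\epsilon}\rightharpoonup u_{0}$ to strong convergence and $u_{0}$ is the desired extremal (with $\|u_{0}\|_{\gamma,p,F}=1$ inherited by a concentration-compactness/Brézis--Lieb argument using $\gamma<\gamma_{1}$); or (ii) $c_{\epsilon}\to\infty$, in which case $|\nabla u_{\epsilon}|^{N}F(\nabla u_{\epsilon})^{N}$ concentrates as a Dirac mass at some $x_{0}$, and the singular weight $F^{o}(x)^{-\beta}$ forces $x_{0}=0$ (so $x_{\epsilon}\to 0$).

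\textbf{Step 3: Blow-up and upper bound.} In the concentration alternative, rescale by setting $r_{\epsilon}$ so that $r_{\epsilon}^{N-\beta}c_{\epsilon}^{N/(N-1)}e^{\lambda_{N}(1-\beta/N)(1-\epsilon)c_{\epsilon}^{N/(N-1)}}$ equals a fixed constant, and define $\psi_{\epsilon}(y)=c_{\epsilon}^{-1}u_{\epsilon}(x_{\epsilon}+r_{\epsilon}y)$, $\varphi_{\epsilon}(y)=c_{\epsilon}^{1/(N-1)}(u_{\epsilon}(x_{\epsilon}+r_{\epsilon}y)-c_{\epsilon})$. These converge to $1$ and to an explicit anisotropic bubble $\varphi_{0}(y)=-\tfrac{N-1}{\lambda_{N}(1-\beta/N)}\log(1+c_{N,\beta}F^{o}(y)^{N(1-\beta/N)/(N-1)})$ respectively. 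Combining this blow-up profile with a capacity estimate on the annulus between the concentration scale $r_{\epsilon}$ and a fixed small radius (the anisotropic analogue of Flucher--Lin's capacity argument, used in Csato--Nguyen--Roy~\cite{7} in the singular isotropic case and in Zhou--Zhou~\cite{33} in the anisotropic case) yields the sharp upper bound
\begin{equation*}
\Lambda\le \int_{\Omega}\frac{e^{\lambda_{N}(1-\beta/N)|u_{0}|^{N/(N-1)}}}{F^{o}(x)^{\beta}}\,\mathrm{d}x+\frac{\kappa_{N}}{N-\beta}\,e^{\lambda_{N}(1-\beta/N)A_{0}+\sum_{k=1}^{N-1}\frac{1}{k}},
\end{equation*}
where $A_{0}$ is the anisotropic regular part (Robin-type constant) of the Green's function of $Q_{N}-\gamma|\cdot|^{p-2}\cdot$ at $0$.

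\textbf{Step 4: Test function construction and contradiction.} Next I construct a Moser-type sequence $\phi_{n}\in W_{0}^{1,N}(\Omega)$, supported in a Wulff ball centered at $0$, interpolating between a logarithmic anisotropic profile on a small ball of radius $r/n$ and a rescaled Green's function on the annulus, then cut off with the singular weight $F^{o}(x)^{-\beta}$ in mind. Normalizing so $\|\phi_{n}\|_{\gamma,p,F}=1$ and expanding carefully (using $\gamma<\gamma_{1}$ to absorb the $L^{p}$ correction into the leading order, and using $\beta<N$ to extract the singular weight contribution), I get a lower bound
\begin{equation*}
\int_{\Omega}\frac{e^{\lambda_{N}(1-\beta/N)|\phi_{n}|^{N/(N-1)}}}{F^{o}(x)^{\beta}}\,\mathrm{d}x>\int_{\Omega}\frac{e^{\lambda_{N}(1-\beta/N)|u_{0}|^{N/(N-1)}}}{F^{o}(x)^{\beta}}\,\mathrm{d}x+\frac{\kappa_{N}}{N-\beta}\,e^{\lambda_{N}(1-\beta/N)A_{0}+\sum_{k=1}^{N-1}\frac{1}{k}}
\end{equation*}
for $n$ large. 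This strictly exceeds the upper bound in Step~3, contradicting the concentration alternative. Consequently, Step~2(i) prevails and $u_{0}$ realizes the supremum in \eqref{eq1.6}.

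The hardest step is undoubtedly Step~3, the sharp capacity upper bound. The anisotropy (which replaces Euclidean balls with Wulff balls and requires anisotropic radial symmetrization), the singular factor $F^{o}(x)^{-\beta}$ (which modifies the Green's function asymptotics and the fundamental solution at $0$), and the Lagrange correction from the $L^{p}$ subtraction must all be tracked to leading \emph{and} subleading order to match the constant in the test-function lower bound exactly; an error of more than $o(1)$ would break the contradiction. Step~4 is technical but mechanical once Step~3 has identified the precise quantity to beat.
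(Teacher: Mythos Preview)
Your strategy is exactly the paper's: subcritical maximizers (Lemma~\ref{le3.1}), the boundedness/blow-up alternative (Lemma~\ref{le4.1}), the capacity upper bound (Lemma~\ref{le4.6}), and the Moser-type test functions of Section~\ref{section5} producing a contradiction. A few small corrections are in order: the constant in front of the exponential must be $\tfrac{N\kappa_{N}}{N-\beta}$, not $\tfrac{\kappa_{N}}{N-\beta}$; in the blow-up alternative the paper first proves $u_{0}\equiv 0$ (via the Lions-type Lemma~\ref{le2.2}), so both the upper and lower bounds reduce to $\int_{\Omega}F^{o}(x)^{-\beta}\,\mathrm{d}x$ plus the exponential term, and your expressions retaining $u_{0}$ in that integral are only meaningful once this vanishing is established; and you have overloaded $c_{\epsilon}$ as both the Lagrange multiplier and the sup norm, which the paper keeps separate as $\lambda_{\epsilon}$ and $c_{\epsilon}$.
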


We organize this paper as follows. In Section \ref{section2}, we give some preliminaries. Meanwhile, under anisotropic Dirichlet norm and $L^{p}$ norm, we establish the Lions type concentration-compactness principle for singular Trudinger-Moser inequality. In Section \ref{section3}, we prove the existence of maximizers for subcritical inequality. In Section \ref{section4}, we analyze the asymptotic behavior of extremal functions in subcritical cases and establish capacity estimate under the assumption that the blow-up phenomena occurs. In Section \ref{section5}, we prove Theorem \ref{th1.1} and Theorem \ref{th1.4} by the results obove and the construction of test functions.

\section{Preliminaries}\label{section2}

In this section, we provide some preliminary information we will need later.

Let $F:\R^{N}\rightarrow[0,+\infty)$ be a convex function of class $C^{2}(\R^{N}\backslash\{0\})$, which is even and positively homogeneous of degree 1, then there holds
$$
F(tx)=\lvert t\rvert F(x)\;\;\mathrm{for}\;\mathrm{any}\;t\in\R,\;x\in\R^{N}.
$$

We further assume $F(x)>0$ for any $x\neq 0$ and $Hess(F^{2})$ is positive definite in $\R^{N}\backslash\{0\}$, which leading $Hess(F^{N})$ is positive definite in $\R^{N}\backslash\{0\}$ by Xie and Gong \cite{29}. There are two constants $0<a\leq b<\infty$ such that $a\lvert x\rvert\leq F(x)\leq b\lvert x\rvert$ for any $x\in\R^{N}$ and a typical example for $F$ is $F(x)=(\sum\limits_{i}\lvert x_{i}\rvert^{q})^{\frac{1}{q}}$ for $q\in(1,+\infty)$.

Considering the minimization problem
$$
\min_{u\in W^{1,N}(\R^{N})}\int_{\R^{N}}F(\nabla u)^{N}\;\mathrm{d}x,
$$
its Euler-Lagrange equation contains an operator of the form
$$
Q_{N}(u):=\sum_{i=1}^{N}\frac{\partial}{\partial x_{i}}(F(\nabla u)^{N-1}F_{x_{i}}(\nabla u)),
$$
which is called as $N$-anisotropic Laplacian or $N$-Finsler Laplacian.

Let $F^{o}$ be the support function of $K:=\{x\in\R^{N}:F(x)\leq 1\}$, which is defined by
$$
F^{o}(x):=\sup_{\xi\in K}\langle x,\xi\rangle,
$$
then $F^{o}:\R^{N}\rightarrow[0,+\infty)$ is also a convex, positively homogeneous function of class $C^{2}(\R^{N}\backslash\{0\})$.

From \cite{3}, $F^{o}$ is dual to $F$ in the sense that
$$
F^{o}(x)=\sup_{\xi\neq 0}\frac{\langle x,\xi\rangle}{F(\xi)}, \;F(x)=\sup_{\xi\neq 0}\frac{\langle x,\xi\rangle}{F^{o}(\xi)}.
$$

Consider a map $\Phi:S^{N-1}\rightarrow\R^{N}$ satisfying $\Phi(\xi)=\nabla F(\xi)$. Its image $\Phi(S^{N-1})$ is a smooth, convex hypersurface in $\R^{N}$, which is known as the Wulff shape (or equilibrium crystal shape) of $F$. As a result, $\Phi(S^{N-1})=\{x\in\R^{N}|F^{o}(x)=1\}$(see \cite{26}, Proposition 1). Denote $W_{r}^{x_{0}}=\{x\in\R^{N}:F^{o}(x-x_{0})\leq r\}$. We call $W_{r}$ as a Wulff ball with radius $r$ and center at the origin and denote $\kappa_{N}$ as the Lebesgue measure of $W_{1}$.

Accordingly, we provide some simple properties of $F$, as a direct consequence of the assumption on $F$, also found in \cite{4,10,26,27}.
\begin{lemma}\label{le2.1}
There are\\
(i) $\lvert F(x)-F(y)\rvert\leq F(x+y)\leq F(x)+F(y)$;\\
(ii) $\frac{1}{C}\leq\lvert\nabla F(x)\rvert\leq C$ and $\frac{1}{C}\leq \lvert\nabla F^{o}(x)\rvert\leq C$ for some $C>0$ and any $x\neq0$;\\
(iii) $\langle x,\nabla F(x)\rangle=F(x), \langle x, \nabla F^{o}(x)\rangle = F^{o}(x)$ for  any $x\neq 0$;\\
(iv) $\nabla F(tx)=sgn(t)\nabla F(x)$ for any $x\neq 0$ and $t\neq 0$;\\
(v) $\int_{\partial W_r}\frac{1}{|\nabla F^{o}(x)|}\;\mathrm{d}\sigma= N\kappa_Nr^{N-1}$;\\
(vi) $F(\nabla F^{o}(x))=1, F^{o}(\nabla F(x))=1$   for any $x\neq 0$;\\
(vii) $\langle F(x),\nabla F^{o}(\nabla F(x))\rangle=x,\langle F^{o}(x),\nabla F(\nabla F^{o}(x))\rangle=x$ for any $x\neq 0$.
\end{lemma}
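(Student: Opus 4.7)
My plan is to prove each item by direct computation, treating (i)--(iv) as immediate consequences of the defining properties of $F$ (convex, even, positively $1$-homogeneous, $C^2$ off the origin), leaving the Legendre-duality items (vi)--(vii) as the main substantive step and the coarea item (v) at the end.

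I would handle (i) via convexity and $1$-homogeneity: $F(x+y) = 2F(\tfrac{1}{2}(x+y)) \leq F(x)+F(y)$, and the reverse inequality $F(x)-F(y)\leq F(x+y)$ comes from applying subadditivity to $x = (x+y)+(-y)$ and using evenness. Differentiating the homogeneity identity $F(tx)=tF(x)$ in $t$ at $t=1$ gives Euler's identity (iii); differentiating $F(tx)=|t|F(x)$ in $x$ via the chain rule gives (iv). Item (ii) then follows: (iv) implies $\nabla F$ is $0$-homogeneous, so $|\nabla F|$ is a continuous function on the compact sphere $S^{N-1}$; continuity yields the upper bound, while $|\nabla F(x)||x|\geq \langle x,\nabla F(x)\rangle = F(x) \geq a|x|$ yields the lower bound. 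The same reasoning, with $F$ replaced by $F^o$, handles the corresponding assertions for $F^o$.

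The main step is (vi)--(vii), which I would obtain from the Legendre duality between $\tfrac{1}{2}F^2$ and $\tfrac{1}{2}(F^o)^2$. A direct computation using $F^o(y)=\sup_{F(\xi)\leq 1}\langle y,\xi\rangle$ identifies these as convex conjugates, and strict convexity (guaranteed by the assumption $\mathrm{Hess}(F^2)>0$) forces their gradient maps
$$\nabla(\tfrac{1}{2}F^2)(x) = F(x)\nabla F(x), \qquad \nabla(\tfrac{1}{2}(F^o)^2)(y) = F^o(y)\nabla F^o(y)$$
to be mutually inverse $C^1$-diffeomorphisms of $\R^N\setminus\{0\}$. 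Setting $\hat{x}=x/F(x)$ and applying this inverse relation at $y=\nabla F(\hat{x})=\nabla F(x)$ gives $F^o(\nabla F(x))\,\nabla F^o(\nabla F(x)) = \hat{x}$; taking the inner product of both sides with $\nabla F(x)$ and invoking (iii) twice yields $F^o(\nabla F(x))^2 = 1$, i.e., (vi). Inserting this back produces $\nabla F^o(\nabla F(x)) = x/F(x)$, which is (vii). The two symmetric identities follow by exchanging the roles of $F$ and $F^o$.

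For (v), I would apply the coarea formula to the Lipschitz function $F^o$:
$$|W_r| = \int_0^r\int_{\partial W_s}\frac{1}{|\nabla F^o|}\,d\sigma\,ds.$$
Since $W_r = rW_1$ by $1$-homogeneity of $F^o$, we have $|W_r|=\kappa_N r^N$; differentiating in $r$ gives $\int_{\partial W_r}\frac{1}{|\nabla F^o|}\,d\sigma = N\kappa_N r^{N-1}$. The only genuinely non-routine point in the lemma is the convex-conjugate identification underlying (vi)--(vii); every other property reduces to homogeneity, convexity, or compactness of the unit sphere.
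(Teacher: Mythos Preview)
Your argument is correct. Each step---subadditivity from convexity and evenness, Euler's identity and $0$-homogeneity of $\nabla F$ from $1$-homogeneity, the bounds in (ii) via compactness of $S^{N-1}$, the Legendre-duality derivation of (vi)--(vii), and the coarea computation for (v)---is standard and valid. One small remark: in (vii) the notation $\langle F(x),\nabla F^{o}(\nabla F(x))\rangle=x$ really means the scalar-vector product $F(x)\,\nabla F^{o}(\nabla F(x))=x$, which is exactly what you obtain.

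As for comparison with the paper: the paper does not prove Lemma~2.1 at all. It merely records these properties as ``a direct consequence of the assumption on $F$, also found in \cite{4,10,26,27}'' and moves on. So your proposal is not a different route but rather a self-contained justification where the paper simply defers to the literature; in that sense it adds value by making the Legendre-duality mechanism behind (vi)--(vii) explicit.
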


Next, we give the Lions type concentration-compactness principle in \cite{15} for singular Trudinger-Moser inequality under anisotropic Dirichlet norm and $L^{p}$ norm in bounded domain, which is the extention of Theorem 1.2 in \cite{16}.

\begin{lemma}\label{le2.2}
Let $u\in W_{0}^{1,N}(\Omega)\backslash\{0\}$, $0\leq\gamma<\gamma_{1}$ and $0\leq\beta<N$. Assume $\{u_{k}\}$ is a seqence in $W_{0}^{1,N}(\Omega)$ such that $\|u_{k}\|_{\gamma,p,F}=1$ and $u_{k}\rightharpoonup u$ weakly in $W_{0}^{1,N}(\Omega)$. If
$$
0<q<q_{N}(u):=\frac{1}{\left(1-\|u\|_{\gamma,p,F}^{N}\right)^{\frac{1}{N-1}}},
$$
then
$$
\sup_{k}\int_{\Omega}\frac{e^{q\lambda_{N}(1-\frac{\beta}{N})\lvert u_{k}\rvert^{\frac{N}{N-1}}}}{F^{o}(x)^{\beta}}\;\mathrm{d}x<+\infty.
$$
\end{lemma}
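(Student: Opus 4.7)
The plan is to follow the strategy of Lions' concentration–compactness principle, reducing the question of uniform exponential integrability for $\{u_k\}$ to that of the weak-limit difference $w_k:=u_k-u$, which can then be controlled by the sharp singular anisotropic Trudinger–Moser inequality (the version for $\|F(\nabla\cdot)\|_N\le 1$ with singular weight $F^{o}(x)^{-\beta}$, which arises by combining \eqref{eq1.2} and \eqref{eq1.4}). The first step is a Brezis–Lieb type decomposition of $\|\cdot\|_{\gamma,p,F}^{N}$. Since $W_{0}^{1,N}(\Omega)\hookrightarrow L^{p}(\Omega)$ is compact, $\|u_k-u\|_p\to 0$, so $\|u_k\|_p^{N}\to\|u\|_p^{N}$. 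Passing to a subsequence so that $\nabla u_k\to\nabla u$ a.e.\ and invoking the Brezis–Lieb lemma applied to $F(\nabla u_k)\to F(\nabla u)$ a.e.\ in $L^{N}(\Omega)$, I obtain
\[
\int_{\Omega}F(\nabla u_k)^{N}\,\mathrm{d}x=\int_{\Omega}F(\nabla w_k)^{N}\,\mathrm{d}x+\int_{\Omega}F(\nabla u)^{N}\,\mathrm{d}x+o(1),
\]
and combined with $\|u_k\|_{\gamma,p,F}^{N}=1$ this gives $A_k^{N}:=\int_{\Omega}F(\nabla w_k)^{N}\,\mathrm{d}x\to 1-\|u\|_{\gamma,p,F}^{N}$.

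Next, I would use the elementary inequality, valid for any $\epsilon>0$ with $C(\epsilon)$ depending only on $\epsilon$ and $N$,
\[
|u_k|^{\frac{N}{N-1}}\le(1+\epsilon)|w_k|^{\frac{N}{N-1}}+C(\epsilon)|u|^{\frac{N}{N-1}},
\]
together with Hölder's inequality with conjugate exponents $r,r'>1$, splitting the singular weight as $F^{o}(x)^{-\beta}=F^{o}(x)^{-\beta/r}\cdot F^{o}(x)^{-\beta/r'}$, to estimate
\[
\int_{\Omega}\frac{e^{q\lambda_{N}(1-\frac{\beta}{N})|u_k|^{\frac{N}{N-1}}}}{F^{o}(x)^{\beta}}\,\mathrm{d}x\le I_k^{1/r}\,J^{1/r'},
\]
where $I_k$ and $J$ denote the analogous singular-weighted exponential integrals of $w_k$ and $u$ with enlarged coefficients $qr(1+\epsilon)$ and $qr'C(\epsilon)$, respectively. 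The factor $J$ is finite and independent of $k$ by a single application of the singular anisotropic Trudinger–Moser inequality to the fixed function $u$.

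For $I_k$ I would normalize $v_k:=w_k/A_k$, so that $\int_{\Omega}F(\nabla v_k)^{N}\,\mathrm{d}x=1$ and the relevant exponent becomes $qr(1+\epsilon)A_k^{\frac{N}{N-1}}\lambda_{N}(1-\frac{\beta}{N})|v_k|^{\frac{N}{N-1}}$. Because $A_k^{\frac{N}{N-1}}\to(1-\|u\|_{\gamma,p,F}^{N})^{\frac{1}{N-1}}$ and the hypothesis $q<q_N(u)$ is exactly $q(1-\|u\|_{\gamma,p,F}^{N})^{\frac{1}{N-1}}<1$, I can choose $r$ close to $1$ and $\epsilon$ small so that $qr(1+\epsilon)A_k^{\frac{N}{N-1}}<1$ for all large $k$; the singular anisotropic Trudinger–Moser inequality applied to $v_k$ then yields $\sup_k I_k<+\infty$, completing the proof. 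The main obstacle is the Brezis–Lieb splitting of the anisotropic $F$-Dirichlet $N$-energy, which does not follow directly from weak convergence in $W_{0}^{1,N}(\Omega)$; it must be secured either via almost-everywhere convergence of $\nabla u_k$ (arranged through a Minty-type monotonicity argument using the strict convexity of $F^{N}$) or via uniform convexity of the norm $u\mapsto\|F(\nabla u)\|_{N}$ for $N\ge 2$. Once this splitting is in place, the remaining argument is a delicate but essentially mechanical Hölder-enlargement adapted to the singular weight and anisotropic setting.
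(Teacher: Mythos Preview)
Your approach differs substantially from the paper's. The paper never decomposes $u_k=u+w_k$ and never invokes a Brezis--Lieb splitting. Instead it simply normalizes $v_k:=u_k/\bigl(\int_\Omega F(\nabla u_k)^N\,\mathrm{d}x\bigr)^{1/N}$, so that $\int_\Omega F(\nabla v_k)^N\,\mathrm{d}x=1$; since $\|u_k\|_{\gamma,p,F}=1$ forces $\int_\Omega F(\nabla u_k)^N\,\mathrm{d}x=1+\gamma\|u_k\|_p^N\to 1+\gamma\|u\|_p^N$ by compact embedding alone, one gets $v_k\rightharpoonup v:=u/(1+\gamma\|u\|_p^N)^{1/N}$. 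The already-established Lions-type lemma for the constraint $\|F(\nabla\cdot)\|_N=1$ (Theorem~1.2 in \cite{16}) then applies directly to $\{v_k\}$, and a two-line computation shows the threshold there, $(1-\int_\Omega F(\nabla v)^N\,\mathrm{d}x)^{-1/(N-1)}$, equals $(1+\gamma\|u\|_p^N)^{1/(N-1)}q_N(u)$, which is exactly enough. All analytic difficulty is deferred to \cite{16}; the present lemma is a five-line corollary.

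Your from-scratch route essentially reproves \cite{16} along the way, and the obstacle you flag is real---but neither of your proposed fixes is available under the hypotheses of Lemma~\ref{le2.2}. The sequence $\{u_k\}$ is arbitrary: it satisfies no Euler--Lagrange equation, so there is no Minty-type monotonicity argument to force $\nabla u_k\to\nabla u$ a.e. And uniform convexity of $u\mapsto\|F(\nabla u)\|_N$ only upgrades weak to strong convergence when the norms also converge, which fails here since $\int_\Omega F(\nabla u_k)^N\,\mathrm{d}x\to 1+\gamma\|u\|_p^N>\int_\Omega F(\nabla u)^N\,\mathrm{d}x$ in general. Without a.e.\ gradient convergence the Brezis--Lieb identity for $\int_\Omega F(\nabla u_k)^N\,\mathrm{d}x$ can fail outright (already for $N\ge 3$ with an oscillatory perturbation of a fixed gradient). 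So as written your plan has a genuine gap at its first step; the paper's normalization trick avoids the issue entirely by pushing it into the cited reference.
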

\begin{proof}
    Since $\|u_{k}\|_{\gamma,p,F}=1$ and $u_{k}\rightharpoonup u$ weakly in $W_{0}^{1,N}(\Omega)$, there is
    $$
    \lim_{k\rightarrow+\infty}\int_{\Omega}F(\nabla u_{k})^{N}\;\mathrm{d}x=\lim_{k\rightarrow+\infty}(1+\gamma\|u_{k}\|_{p}^{N})=1+\gamma\|u\|_{p}^{N}.
    $$
    Set
    $$
    v_{k}=\frac{u_{k}}{\left(\int_{\Omega}F(\nabla u_{k})^{N}\;\mathrm{d}x\right)^{\frac{1}{N}}},
    $$
    then
    $$
    \int_{\Omega}F(\nabla v_{k})^{N}\;\mathrm{d}x=1
    $$
    and
    $$
    v_{k}\rightharpoonup v:=\frac{u}{(1+\gamma\|u\|_{p}^{N})^{\frac{1}{N}}}\;\mathrm{weakly}\;\mathrm{in}\;W_{0}^{1,N}(\Omega).$$
    By Theorem 1.2 in \cite{16},
    \begin{align}\label{eq2.1}
        \sup_{k}\int_{\Omega}\frac{e^{t\lambda_{N}(1-\frac{\beta}{N})\lvert v_{k}\rvert^{\frac{N}{N-1}}}}{F^{o}(x)^{\beta}}\;\mathrm{d}x<+\infty
    \end{align}
    for any $0<t<\left(1-\int_{\Omega}F(\nabla v)^{N}\;\mathrm{d}x\right)^{-\frac{1}{N-1}}$. Meanwhile
    \begin{align}
        \lim_{k\rightarrow\infty}q\left(\int_{\Omega}F(\nabla u_{k})^{N}\;\mathrm{d}x\right)^{\frac{1}{N-1}}&=q\left(1+\gamma\|u\|_{p}^{N}\right)^{\frac{1}{N-1}}\notag\\
        &<\left(\frac{1+\gamma\|u\|_{p}^{N}}{1-\|u\|_{\gamma,p,F}^{N}}\right)^{\frac{1}{N-1}}=\left(1-\int_{\Omega}F(\nabla v)^{N}\;\mathrm{d}x\right)^{-\frac{1}{N-1}}.\notag
    \end{align}
    Now take $t>0$ such that
    $$q(\int_{\Omega}F(\nabla u_{k})^{N}\;\mathrm{d}x)^{\frac{1}{N-1}}<t<\left(1-\int_{\Omega}F(\nabla v)^{N}\;\mathrm{d}x\right)^{-\frac{1}{N-1}}
    $$
    for $k$ large enough. Hence,
    \begin{align}
        &\int_{\Omega}\frac{e^{q\lambda_{N}(1-\frac{\beta}{N})\lvert u_{k}\rvert^{\frac{N}{N-1}}}}{F^{o}(x)^{\beta}}\;\mathrm{d}x\notag\\
        =&\int_{\Omega}\frac{e^{q\lambda_{N}(1-\frac{\beta}{N})(\int_{\Omega}F(\nabla u_{k})^{N}\;\mathrm{d}x)^{\frac{1}{N-1}}\lvert v_{k}\rvert^{\frac{N}{N-1}}}}{F^{o}(x)^{\beta}}\;\mathrm{d}x\notag\\
        <&\int_{\Omega}\frac{e^{t\lambda_{N}(1-\frac{\beta}{N})\lvert v_{k}\rvert^{\frac{N}{N-1}}}}{F^{o}(x)^{\beta}}\;\mathrm{d}x\notag
    \end{align}
    for $k$ large enough. So Lemma \ref{le2.2} is followed from \eqref{eq2.1}.
\end{proof}

\section{Maximizers for subcritical
inequalities}\label{section3}

In this section, we prove the existence of extremal functions for subcritical inequalities. Namely,
\begin{lemma}\label{le3.1}
    For any $0<\epsilon<1-\frac{\beta}{N}$, the supremum
    \begin{align}\label{eq3.1}
        \Lambda_{\epsilon}:=\sup_{u\in W_{0}^{1,N}(\Omega),\;\|u\|_{\gamma,p,F}\leq1}\int_{\Omega}\frac{e^{\lambda_{N}(1-\frac{\beta}{N}-\epsilon)\lvert u\rvert^{\frac{N}{N-1}}}}{F^{o}(x)^{\beta}}\;\mathrm{d}x
    \end{align}
    can be attained by $u_{\epsilon}\in W_{0}^{1,N}(\Omega)$ with $\|u_{\epsilon}\|_{\gamma,p,F}=1$. In the distributional sense, $u_{\epsilon}$ satisfies the following equation:
    \begin{align}\label{eq3.2}
    \left\{\begin{array}{l}
    -Q_{N}(u_{\epsilon})-\gamma\|u_{\epsilon}\|_{p}^{N-p}u_{\epsilon}^{p-1}=\lambda_{\epsilon}^{-1}F^{o}(x)^{-\beta}u_{\epsilon}^{\frac{1}{N-1}}e^{\lambda_{N}(1-\frac{\beta}{N}-\epsilon)u_{\epsilon}^{\frac{N}{N-1}}}\;\;\;\;\;in\;\Omega,\\
    \lambda_{\epsilon}=\int_{\Omega}F^{o}(x)^{-\beta}u_{\epsilon}^{\frac{N}{N-1}}e^{\lambda_{N}(1-\frac{\beta}{N}-\epsilon)u_{\epsilon}^{\frac{N}{N-1}}}\;\mathrm{d}x,\\
    u_{\epsilon}\geq0\;\;in\;\Omega.
    \end{array}\right.
    \end{align}
    Moreover,
    \begin{align}\label{eq3.3}
        \liminf_{\epsilon\rightarrow0}\lambda_{\epsilon}>0
    \end{align}
    and
    \begin{align}\label{eq3.4}
        \lim_{\epsilon\rightarrow0}\Lambda_{\epsilon}=\Lambda.
    \end{align}
\end{lemma}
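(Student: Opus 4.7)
The plan is the direct method: find a maximizer by taking a bounded maximizing sequence, pass to a weak limit, and verify that the exponential functional is continuous along the limit via a uniform-integrability argument; the Euler--Lagrange equation, the lower bound on $\lambda_\epsilon$, and the limit $\Lambda_\epsilon\to\Lambda$ then follow by fairly standard computations.

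\textbf{Attainment of $\Lambda_\epsilon$.} I would take a maximizing sequence $\{u_k\}\subset W_0^{1,N}(\Omega)$ with $\|u_k\|_{\gamma,p,F}=1$. The defining inequality $\gamma_1\|u_k\|_p^N\le \int_\Omega F(\nabla u_k)^N\,\mathrm{d}x$ combined with the constraint and $\gamma<\gamma_1$ yields $\|u_k\|_p^N\le 1/(\gamma_1-\gamma)$ and a uniform $W_0^{1,N}$ bound, so along a subsequence $u_k\rightharpoonup u_\epsilon$ weakly, strongly in every $L^q$, and a.e. By evenness of $F$ we may replace $u_k$ with $|u_k|$ and assume $u_\epsilon\ge 0$. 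The crux is to prove
\begin{equation*}
\sup_k \int_\Omega \frac{e^{q\lambda_N(1-\beta/N-\epsilon)|u_k|^{N/(N-1)}}}{F^o(x)^\beta}\,\mathrm{d}x < +\infty\qquad\text{for some }q>1,
\end{equation*}
since Vitali's convergence theorem will then promote the a.e.\ convergence $u_k\to u_\epsilon$ to convergence of the functional itself. When $u_\epsilon\ne 0$, Lemma \ref{le2.2} supplies this bound: $q_N(u_\epsilon)>1$ leaves room strictly above $1$ after absorbing the factor $(1-\beta/N-\epsilon)/(1-\beta/N)<1$. When $u_\epsilon=0$, strong $L^p$ convergence forces $\|F(\nabla u_k)\|_N^N\to 1$; applying the critical anisotropic singular Trudinger--Moser inequality (the combination of \eqref{eq1.2} and \eqref{eq1.4}) to $u_k/\|F(\nabla u_k)\|_N$ with exponent $q(1-\beta/N-\epsilon)\|F(\nabla u_k)\|_N^{N/(N-1)}<(1-\beta/N)$ — possible for $k$ large and $q>1$ close to $1$ thanks to the subcritical slack $\epsilon$ — again suffices. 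The weak limit $u_\epsilon$ is therefore admissible and attains $\Lambda_\epsilon$; positive homogeneity of $\|\cdot\|_{\gamma,p,F}$ together with strict monotonicity of the integrand in $|u|$ then forces $\|u_\epsilon\|_{\gamma,p,F}=1$ (noting $u_\epsilon\not\equiv 0$, since $\Lambda_\epsilon>\int_\Omega F^o(x)^{-\beta}\,\mathrm{d}x$ by any nonzero test function).

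\textbf{Euler--Lagrange equation and the two limits.} Equation \eqref{eq3.2} follows by differentiating both the objective and the constraint $\|\cdot\|_{\gamma,p,F}^N$ along $u_\epsilon+t\phi$, writing the Lagrange multiplier identity with some $\mu$, and testing against $\phi=u_\epsilon$ to pin $\mu$ using the homogeneity identity $-\int_\Omega Q_N(u_\epsilon)u_\epsilon\,\mathrm{d}x=\int_\Omega F(\nabla u_\epsilon)^N\,\mathrm{d}x$ and $\|u_\epsilon\|_{\gamma,p,F}^N=1$; the resulting normalization is precisely the quantity $\lambda_\epsilon$ of \eqref{eq3.2}. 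For \eqref{eq3.4}, $\Lambda_\epsilon\le\Lambda$ is trivial, while $\liminf\Lambda_\epsilon\ge\Lambda$ follows from the monotone convergence theorem applied to any admissible $u$ (the integrand increases pointwise as $\epsilon\downarrow 0$) followed by taking the supremum. For \eqref{eq3.3}, the elementary inequality $te^t\ge e^t-1$ for $t\ge 0$, applied with $t=\lambda_N(1-\beta/N-\epsilon)u_\epsilon^{N/(N-1)}$ and integrated against $F^o(x)^{-\beta}\,\mathrm{d}x$, gives
\begin{equation*}
\lambda_\epsilon \;\ge\; \frac{\Lambda_\epsilon-\int_\Omega F^o(x)^{-\beta}\,\mathrm{d}x}{\lambda_N(1-\beta/N-\epsilon)}.
\end{equation*}
Any single nonzero admissible test function shows $\Lambda>\int_\Omega F^o(x)^{-\beta}\,\mathrm{d}x$, and since $\Lambda_\epsilon\to\Lambda$ by \eqref{eq3.4} the right-hand side is bounded below by a positive constant.

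The main obstacle is the uniform integrability step powering Vitali's theorem, specifically the branch $u_\epsilon=0$ in which Lemma \ref{le2.2} is not directly applicable: one has to extract the factor $q>1$ out of the subcritical gap $\epsilon$ using $\|F(\nabla u_k)\|_N\to 1$ together with a careful scaling reduction to the critical singular anisotropic Trudinger--Moser inequality.
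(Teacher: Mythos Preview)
Your proposal is correct and lands at the same destination, but the uniform--integrability step is organized differently from the paper. The paper does \emph{not} split on whether $u_\epsilon=0$ and does not invoke Lemma~\ref{le2.2} here at all. Instead it separates the singular weight from the exponential by H\"older,
\[
\int_\Omega \frac{e^{i\lambda_N(1-\beta/N-\epsilon)u_k^{N/(N-1)}}}{F^o(x)^{i\beta}}\,\mathrm{d}x
\le\Big(\int_\Omega e^{s'i\lambda_N(1-\beta/N-\epsilon)u_k^{N/(N-1)}}\,\mathrm{d}x\Big)^{1/s'}
\Big(\int_\Omega F^o(x)^{-si\beta}\,\mathrm{d}x\Big)^{1/s},
\]
and then applies the non-singular inequality \eqref{eq1.5} (Theorem~1.1 in \cite{17}) directly to the constraint $\|u_k\|_{\gamma,p,F}\le 1$. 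Optimizing over $s$ yields $L^i$-boundedness for every $1\le i<\frac{1}{1-\epsilon}$ in one stroke, with no case distinction and no concentration--compactness input. The paper then concludes convergence of the functional via a Lipschitz estimate for $t\mapsto e^{t}$ combined with $L^i$-boundedness, rather than Vitali; either packaging is fine.

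Your route via Lemma~\ref{le2.2} in the $u_\epsilon\neq0$ branch and a rescaling to the critical singular anisotropic Trudinger--Moser bound in the $u_\epsilon=0$ branch also works, but it is heavier than necessary: the ``obstacle'' you flag in the $u_\epsilon=0$ branch disappears once you notice that the subcritical gap $\epsilon$ alone already buys the extra factor $q>1$, uniformly, through the H\"older/\eqref{eq1.5} argument above. The remaining parts (Euler--Lagrange via Lagrange multipliers and testing with $u_\epsilon$, $\Lambda_\epsilon\to\Lambda$ by monotone convergence/Fatou, and $\liminf\lambda_\epsilon>0$ from $te^t\ge e^t-1$) match the paper.
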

\begin{proof}
    Let $\{u_{k}\}$ be a maximizing sequence for $\Lambda_{\epsilon}$, i.e., $u_{k}\in W_{0}^{1,N}(\Omega)$, $\|u_{k}\|_{\gamma,p,F}\leq1$ and
    $$
    \lim_{k\rightarrow+\infty}\int_{\Omega}\frac{e^{\lambda_{N}(1-\frac{\beta}{N}-\epsilon)\lvert u_{k}\rvert^{\frac{N}{N-1}}}}{F^{o}(x)^{\beta}}\;\mathrm{d}x=\Lambda_{\epsilon}.
    $$
    Since $\|\lvert u_{k}\rvert\|_{\gamma,p,F}\leq\|u_{k}\|_{\gamma,p,F}\leq1$, we can assume $\{u_{k}\}$ is nonnegative.

    Notice that
    \begin{align}
        1\geq\|u_{k}\|_{\gamma,p,F}^{N}=\int_{\Omega}F(\nabla u_{k})^{N}\;\mathrm{d}x-\gamma\|u_{k}\|_{p}^{N}\geq\left(1-\frac{\gamma}{\gamma_{1}}\right)\int_{\Omega}F(\nabla u_{k})^{N}\;\mathrm{d}x\notag
    \end{align}
    and $\gamma<\gamma_{1}$, we have $\{u_{k}\}$ is bounded in $W_{0}^{1,N}(\Omega)$. Up to a subsequence, there exists some $u_{\epsilon}\in W_{0}^{1,N}(\Omega)$ such that
    \begin{align}
        &u_{k}\rightharpoonup u_{\epsilon}\;\mathrm{weakly}\;\mathrm{in}\;W_{0}^{1,N}(\Omega),\notag\\
        &u_{k}\rightarrow u_{\epsilon}\;\mathrm{strongly}\;\mathrm{in}\;L^{i}(\Omega)\;\mathrm{for}\;i\in[1,+\infty),\notag\\
        &u_{k}\rightarrow u_{\epsilon}\;\mathrm{a.e.}\;\mathrm{in}\;\Omega\notag
    \end{align}
    as $k\rightarrow+\infty$. Obviously, $u_{\epsilon}$ is also nonnegative and $\|u_{\epsilon}\|_{\gamma,p,F}\leq1$.

    For any $i\geq1$ and $s>1$, let $s'=\frac{s}{s-1}$, by the H\"{o}lder inequality,
    $$
    \int_{\Omega}\frac{e^{i\lambda_{N}(1-\frac{\beta}{N}-\epsilon)u_{k}^{\frac{N}{N-1}}}}{F^{o}(x)^{i\beta}}\;\mathrm{d}x\leq\left(\int_{\Omega}e^{s'i\lambda_{N}(1-\frac{\beta}{N}-\epsilon)u_{k}^{\frac{N}{N-1}}}\;\mathrm{d}x\right)^{\frac{1}{s'}}\left(\int_{\Omega}\frac{1}{F^{o}(x)^{si\beta}}\;\mathrm{d}x\right)^{\frac{1}{s}}.
    $$
    Then by Theorem 1.1 in \cite{17}, we have $\frac{e^{\lambda_{N}(1-\frac{\beta}{N}-\epsilon)u_{k}^{\frac{N}{N-1}}}}{F^{o}(x)^{\beta}}$ is bounded in $L^{i}(\Omega)$ for any $1\leq i<\frac{1}{1-\epsilon}$. Since
    \begin{align}
    &\frac{\lvert e^{\lambda_{N}(1-\frac{\beta}{N}-\epsilon)u_{k}^{\frac{N}{N-1}}}-e^{\lambda_{N}(1-\frac{\beta}{N}-\epsilon)u_{\epsilon}^{\frac{N}{N-1}}}\rvert}{F^{o}(x)^{\beta}}\notag\\
    \leq&\lambda_{N}(1-\frac{\beta}{N}-\epsilon)\frac{e^{\lambda_{N}(1-\frac{\beta}{N}-\epsilon)u_{k}^{\frac{N}{N-1}}}+e^{\lambda_{N}(1-\frac{\beta}{N}-\epsilon)u_{\epsilon}^{\frac{N}{N-1}}}}{F^{o}(x)^{\beta}}\lvert u_{k}^{\frac{N}{N-1}}-u_{\epsilon}^{\frac{N}{N-1}}\rvert\notag
    \end{align}
    and $u_{k}\rightarrow u_{\epsilon}$ strongly in $L^{i}(\Omega)$ for $i\in[1,+\infty)$, we conclude
    \begin{align}\label{eq3.5}
        \lim_{k\rightarrow+\infty}\int_{\Omega}\frac{e^{\lambda_{N}(1-\frac{\beta}{N}-\epsilon)u_{k}^{\frac{N}{N-1}}}}{F^{o}(x)^{\beta}}\;\mathrm{d}x=\int_{\Omega}\frac{e^{\lambda_{N}(1-\frac{\beta}{N}-\epsilon)u_{\epsilon}^{\frac{N}{N-1}}}}{F^{o}(x)^{\beta}}\;\mathrm{d}x,
    \end{align}
    which implies $u_{\epsilon}$ is a maximizer for $\Lambda_{\epsilon}$.

    Clearly $u_{\epsilon}\not\equiv0$ and we claim $\|u_{\epsilon}\|_{\gamma,p,F}=1$. If otherwise,
    \begin{align}
    \Lambda_{\epsilon}=\int_{\Omega}\frac{e^{\lambda_{N}(1-\frac{\beta}{N}-\epsilon)u_{\epsilon}^{\frac{N}{N-1}}}}{F^{o}(x)^{\beta}}\;\mathrm{d}x<\int_{\Omega}\frac{e^{\lambda_{N}(1-\frac{\beta}{N}-\epsilon)(\frac{u_{\epsilon}}{\|u_{\epsilon}\|_{\gamma,p,F}})^{\frac{N}{N-1}}}}{F^{o}(x)^{\beta}}\;\mathrm{d}x\leq\Lambda_{\epsilon},\notag
    \end{align}
    which is a contradiction.

    Furthermore, $u_{\epsilon}$ satisfies the Euler-Lagrange equation \eqref{eq3.2} in the distributional sense. Applying elliptic regularity theory obtained by Serrin (\cite{21}, Theorem 6 and Theorem 8) and Tolksdorf (\cite{24}, Theorem 1), we obtain $u_{\epsilon}\in C^{0}(\Omega)\cap C_{\mathrm{loc}}^{1}(\Omega\backslash\{0\})$.

    Obviously, $\limsup\limits_{\epsilon\rightarrow0}\Lambda_{\epsilon}\leq\Lambda$. On the other hand, for any $u\in W_{0}^{1,N}(\Omega)$ with $\|u\|_{\gamma,p,F}\leq1$, by Fatou's lemma,
    \begin{align}
        \int_{\Omega}\frac{e^{\lambda_{N}(1-\frac{\beta}{N})\lvert u\rvert^{\frac{N}{N-1}}}}{F^{o}(x)^{\beta}}\;\mathrm{d}x\leq\liminf_{\epsilon\rightarrow0}\int_{\Omega}\frac{e^{\lambda_{N}(1-\frac{\beta}{N}-\epsilon)\lvert u\rvert^{\frac{N}{N-1}}}}{F^{o}(x)^{\beta}}\;\mathrm{d}x\leq\liminf_{\epsilon\rightarrow0}\Lambda_{\epsilon},\notag
    \end{align}
    which implies $\liminf\limits_{\epsilon\rightarrow0}\Lambda_{\epsilon}\geq\Lambda$. Thus $\lim\limits_{\epsilon\rightarrow0}\Lambda_{\epsilon}=\Lambda$.

    Finally, by $e^{t}\leq1+te^{t}$ for any $t\geq0$,
    \begin{align}
        \liminf_{\epsilon\rightarrow0}\lambda_{\epsilon}&\geq\lim_{\epsilon\rightarrow0}\frac{1}{\lambda_{N}(1-\frac{\beta}{N}-\epsilon)}\int_{\Omega}\frac{e^{\lambda_{N}(1-\frac{\beta}{N}-\epsilon)u_{\epsilon}^{\frac{N}{N-1}}}-1}{F^{o}(x)^{\beta}}\;\mathrm{d}x\notag\\
        &=\frac{1}{\lambda_{N}(1-\frac{\beta}{N})}(\Lambda-\int_{\Omega}F^{o}(x)^{-\beta}\;\mathrm{d}x)\notag\\
        &>0.\notag
    \end{align}
\end{proof}

\section{Asymptotic behavior of maximizers for the subcritical inequalities}\label{section4}

In this section, we analyze the blow-up behavior of extremal functions in the subcritical cases and establish capacity estimate under the assumption that the blow-up phenomena occur.
\subsection{Energy concentration phenomenon}

Since $\|u_{\epsilon}\|_{\gamma,p,F}=1$ and $\gamma<\gamma_{1}$, we have $\{u_{\epsilon}\}$ is bounded in $W_{0}^{1,N}(\Omega)$. Up to a subsequence, there exists some $u_{0}\in W_{0}^{1,N}(\Omega)$ such that
\begin{align}
    &u_{\epsilon}\rightharpoonup u_{0}\;\mathrm{weakly}\;\mathrm{in}\;W_{0}^{1,N}(\Omega),\notag\\
    &u_{\epsilon}\rightarrow u_{0}\;\mathrm{strongly}\;\mathrm{in}\;L^{i}(\Omega),\;i\in[1,+\infty),\notag\\
    &u_{\epsilon}\rightarrow u_{0}\;\mathrm{a.e.}\;\mathrm{in}\;\Omega\notag
\end{align}
as $\epsilon\rightarrow0$.

Denote $c_{\epsilon}=\max\limits_{x\in\overline{\Omega}}u_{\epsilon}(x)=u_{\epsilon}(x_{\epsilon})$ for some $x_{\epsilon}\in\overline{\Omega}$. Up to a subsequence, we can assume $x_{\epsilon}\rightarrow x_{0}\in\overline{\Omega}$. If $c_{\epsilon}$ is bounded as $\epsilon\rightarrow0$, then for any $u\in W_{0}^{1,N}(\Omega)$ with $\|u\|_{\gamma,p,F}\leq1$, by the Lebesgue dominated convergence theorem,
\begin{align}\label{eq4.1}
    \Lambda=\lim_{\epsilon\rightarrow0}\int_{\Omega}\frac{e^{\lambda_{N}(1-\frac{\beta}{N}-\epsilon)u_{\epsilon}^{\frac{N}{N-1}}}}{F^{o}(x)^{\beta}}\;\mathrm{d}x=\int_{\Omega}\frac{e^{\lambda_{N}(1-\frac{\beta}{N})u_{0}^{\frac{N}{N-1}}}}{F^{o}(x)^{\beta}}\;\mathrm{d}x
\end{align}
which implies  $u_{0}$ is a maximizer for $\Lambda$ and $\|u_{0}\|_{\gamma,p,F}=1$.

In the following, we assume $c_{\epsilon}\rightarrow+\infty$.
\begin{lemma}\label{le4.1}
    $u_{0}\equiv0$, $x_{0}=0$ and $F(\nabla u_{\epsilon})^{N}\mathrm{d}x\rightharpoonup \delta_{0}$ weakly in the sense of measure as $\epsilon\rightarrow0$, where $\delta_{0}$ denotes the Dirac measure centered at $0$.
\end{lemma}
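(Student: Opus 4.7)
The plan is to argue by contradiction at each stage, using Lemma \ref{le2.2} together with the Euler--Lagrange equation \eqref{eq3.2} and elliptic regularity as the main tools. To prove $u_0\equiv 0$, I would suppose instead $u_0\not\equiv 0$, so that $\|u_0\|_{\gamma,p,F}^N>0$ and therefore $q_N(u_0)=(1-\|u_0\|_{\gamma,p,F}^N)^{-1/(N-1)}>1$. Fix some $q\in(1,q_N(u_0))$ chosen small enough that also $\beta(q-1)<N$; then Lemma \ref{le2.2} yields
\[
\sup_\epsilon\int_\Omega\frac{e^{q\lambda_N(1-\beta/N)u_\epsilon^{N/(N-1)}}}{F^o(x)^\beta}\,\mathrm{d}x<\infty.
\]
Splitting the integrand $e^{\lambda_N(1-\beta/N-\epsilon)u_\epsilon^{N/(N-1)}}/F^o(x)^\beta$ via H\"older as $\bigl(e^{q\lambda_N(1-\beta/N)u_\epsilon^{N/(N-1)}}/F^o(x)^\beta\bigr)^{1/q}\cdot F^o(x)^{-\beta(q-1)/q}$, together with the local integrability of $F^o(x)^{-\beta(q-1)}$ guaranteed by the choice of $q$, delivers equi-integrability of the family. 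Vitali's theorem with $u_\epsilon\to u_0$ a.e.\ then gives $\Lambda_\epsilon\to\int_\Omega e^{\lambda_N(1-\beta/N)u_0^{N/(N-1)}}/F^o(x)^\beta\,\mathrm{d}x$, which combined with \eqref{eq3.4} forces $u_0$ to be a maximizer. A rescaling argument rules out $\|u_0\|_{\gamma,p,F}<1$, so $\|u_0\|_{\gamma,p,F}=1$ and $u_\epsilon\to u_0$ strongly in $W_0^{1,N}(\Omega)$. The uniform $L^q$-bound just derived on the right-hand side of \eqref{eq3.2}, together with the Serrin--Tolksdorf regularity cited in the proof of Lemma \ref{le3.1}, then produces a uniform $L^\infty$-bound on $\{u_\epsilon\}$, contradicting $c_\epsilon\to\infty$.

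With $u_0\equiv 0$ established, extract a subsequence so that $F(\nabla u_\epsilon)^N\,\mathrm{d}x\rightharpoonup\mu$ weakly as measures on $\overline\Omega$, where $\mu(\overline\Omega)=1$ by $\|u_\epsilon\|_{\gamma,p,F}=1$ and $\|u_\epsilon\|_p\to 0$. For any $y\in\overline\Omega$, the local counterpart of Lemma \ref{le2.2}, obtained by multiplying $u_\epsilon$ by a smooth cut-off supported in a small ball around $y$, yields $L^{1+\eta}_{\mathrm{loc}}$-control of $\{e^{\lambda_N(1-\beta/N-\epsilon)u_\epsilon^{N/(N-1)}}/F^o(x)^\beta\}$ provided that the local mass satisfies $\mu(B_r(y))<\nu_c(y)$, where $\nu_c(0)=1$ (matching the singular anisotropic Trudinger--Moser exponent $\lambda_N(1-\beta/N)$) and $\nu_c(y)=(1-\beta/N)^{-(N-1)}>1$ for $y\neq 0$ (matching the non-singular inequality \eqref{eq1.4}, since $\lambda_N(1-\beta/N)\nu^{1/(N-1)}\le\lambda_N$ is required). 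Since $\mu(\overline\Omega)=1<\nu_c(y)$ for every $y\neq 0$ (when $\beta>0$), local $L^{1+\eta}$-control holds on every ball away from $0$, and elliptic regularity upgrades this to local uniform $L^\infty$-bounds on $\{u_\epsilon\}$. This rules out any blow-up outside $\{0\}$: the concentration point $x_0=\lim x_\epsilon$ must equal $0$, and $\mu$ must be supported at $\{0\}$. Combined with $\mu(\overline\Omega)=1$, this forces $\mu=\delta_0$.

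The principal obstacle is the localization step invoked in the second paragraph: one must verify that multiplying $u_\epsilon$ by a suitable cut-off $\eta$ and applying Lemma \ref{le2.2} on the truncated function still gives the correct critical mass threshold, with the singular weight at the origin producing the relaxed threshold $\nu_c(0)=1$ and the non-singular Moser inequality producing the stricter threshold $\nu_c(y)=(1-\beta/N)^{-(N-1)}$ elsewhere. The elliptic-regularity upgrade from exponential $L^{1+\eta}_{\mathrm{loc}}$-integrability to $L^\infty_{\mathrm{loc}}$-bounds via Serrin and Tolksdorf also needs care, since the $N$-Finsler operator $Q_N$ is degenerate at zero gradient. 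Finally, the borderline case $\beta=0$ (in which the singular mechanism that pins the blow-up point to the origin disappears) should be handled either by a symmetry convention or by a rearrangement argument showing that moving the concentration to $0$ strictly increases the functional whenever $\beta>0$.
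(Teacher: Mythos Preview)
Your approach is essentially correct and close to the paper's, but differs in organization and has one small gap worth flagging.

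For $u_0\equiv 0$: The paper's argument is exactly your last sentence of that paragraph --- Lemma~\ref{le2.2} bounds the right-hand side of \eqref{eq3.2} in $L^i$ for some $i>1$, and then the regularity estimate from \cite{33} produces a uniform $L^\infty$ bound, contradicting $c_\epsilon\to\infty$.  Your Vitali/maximizer detour is redundant: you never actually use the conclusion that $u_0$ is an extremal or that $u_\epsilon\to u_0$ strongly; the $L^q$-bound coming from Lemma~\ref{le2.2} alone already suffices for the regularity step.

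For $x_0=0$ and $\mu=\delta_0$: Here the arguments genuinely differ.  The paper treats the two claims in sequence.  First, assuming $x_0\ne 0$, it splits $\Omega$ into the Wulff ball $W_{F^o(x_0)/2}$ (containing the singular point) and its complement (containing $x_0$), bounds the nonlinearity in $L^{i}$ on each piece, and obtains a global $L^\infty$ bound --- contradiction.  Second, assuming $\mu\ne\delta_0$, it takes a cutoff near $0$ with $\int_{W_r}F(\nabla u_\epsilon)^N\,\mathrm{d}x\le\theta<1$, applies the singular anisotropic Trudinger--Moser inequality to $\phi u_\epsilon$, and gets a local $L^\infty$ bound on $W_{r/2}$, contradicting $c_\epsilon=u_\epsilon(x_\epsilon)\to\infty$ at the already-established blow-up point $x_0=0$.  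Your threshold argument, exploiting that away from the origin the effective critical mass is $(1-\beta/N)^{-(N-1)}>1$ (since the weight $F^o(x)^{-\beta}$ is locally bounded there and one may use the non-singular inequality \eqref{eq1.4}), is a more conceptual alternative that handles both conclusions at once.

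One genuine gap in your version: local $L^\infty$ bounds on $u_\epsilon$ away from $0$ do \emph{not} by themselves force $\mu$ to be supported at $\{0\}$; gradient energy could in principle concentrate where the functions stay bounded.  You need the further step (available via Tolksdorf) that the $L^\infty$ bound together with the $L^{1+\eta}_{\mathrm{loc}}$ bound on the right-hand side of \eqref{eq3.2} yields uniform $C^{1,\alpha}_{\mathrm{loc}}$ estimates away from $0$; then Arzel\`a--Ascoli and $u_0\equiv 0$ give $\nabla u_\epsilon\to 0$ locally uniformly, whence $\mu|_{\overline\Omega\setminus\{0\}}=0$.  The paper sidesteps this by arguing in the opposite direction: if $\mu\ne\delta_0$, then the mass near $0$ is strictly less than $1$, and a cutoff there gives an $L^\infty$ bound right at the concentration point, directly contradicting $c_\epsilon\to\infty$.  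Finally, as you note, when $\beta=0$ neither mechanism singles out the origin; the paper does not comment on this and the claim $x_0=0$ should then be read up to translation.
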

\begin{proof}
    Suppose $u_{0}\not\equiv0$. By Lemma \ref{le2.2} and $\|u_{\epsilon}\|_{p}^{N-p}u_{\epsilon}^{p-1}$ is bounded in $L^{\frac{p}{p-1}}(\Omega)$, there exists some $i>1$ such that $-Q_{N}(u_{\epsilon})$ is bounded in $L^{i}(\Omega)$. Then by Lemma 2.2 in \cite{33}, $u_{\epsilon}$ is uniformly bounded in $\Omega$, which contradicts $c_{\epsilon}\rightarrow+\infty$ as $\epsilon\rightarrow0$. Hence $u_{0}\equiv0$.

    Suppose $x_{0}\not=0$. Then $F^{o}(x)^{-\beta}e^{\lambda_{N}(1-\frac{\beta}{N}-\epsilon)u_{\epsilon}^{\frac{N}{N-1}}}$ is bounded in $L^{i_{1}}(\Omega\cap W_{\frac{F^{o}(x_{0})}{2}})$ for some $i_{1}>1$. In view of $F^{o}(x)^{-\beta}\leq(\frac{F^{o}(x_{0})}{2})^{-\beta}$ when $F^{o}(x)\geq\frac{F^{o}(x_{0})}{2}$ and Theorem 1.1 in \cite{17}, we have $F^{o}(x)^{-\beta}e^{\lambda_{N}(1-\frac{\beta}{N}-\epsilon)u_{\epsilon}^{\frac{N}{N-1}}}$ is bounded in $L^{i_{2}}(\Omega\backslash W_{\frac{F^{o}(x_{0})}{2}})$ for some $i_{2}>1$. Therefore $-Q_{N}(u_{\epsilon})$ is bounded in $L^{i}(\Omega)$ for some $i>1$ and by Lemma 2.2 in \cite{33}, $u_{\epsilon}$ is uniformly bounded in $\Omega$, which contradicts $c_{\epsilon}\rightarrow+\infty$ as $\epsilon\rightarrow0$. Hence $x_{0}=0$.

    If $F(\nabla u_{\epsilon})^{N}\mathrm{d}x\rightharpoonup\mu\not=\delta_{0}$ weakly in the sense of measure as $\epsilon\rightarrow0$. Notice that $\int_{\Omega}F(\nabla u_{\epsilon})^{N}\;\mathrm{d}x=1+o_{\epsilon}(1)$, there exists $r>0$ small enough and $\theta<1$ such that $W_{r}\subset\Omega$ and
    $$
    \lim_{\epsilon\rightarrow0}\int_{W_{r}}F(\nabla u_{\epsilon})^{N}\;\mathrm{d}x\leq\theta<1.
    $$
    Consider the cut-off function $\phi\in C_{0}^{\infty}(W_{r})$, which is equal to $1$ on $W_{\frac{r}{2}}$, since $u_{\epsilon}\rightarrow0$ in $L^{i}(\Omega)$ for all $i\in[1,+\infty)$, we have
    $$
    \limsup_{\epsilon\rightarrow0}\int_{W_{r}}F(\nabla(\phi u_{\epsilon}))^{N}\;\mathrm{d}x\leq\lim_{\epsilon\rightarrow0}\int_{W_{r}}F(\nabla u_{\epsilon})^{N}\;\mathrm{d}x\leq\theta<1.
    $$
    For sufficiently small $\epsilon>0$, using anisotropic singular Trudinger-Moser inequality (see \cite{35}, Theorem 2.1) to $\phi u_{\epsilon}$ on $W_{r}$, there exists some $i>1$ such that $-Q_{N}(u_{\epsilon})$ is bounded in $L^{i}(W_{\frac{r}{2}})$. By Lemma 2.2 in \cite{33}, $u_{\epsilon}$ is uniformly bounded in $W_{\frac{r}{2}}$, which contradicts $c_{\epsilon}\rightarrow+\infty$ as $\epsilon\rightarrow0$. Hence $F(\nabla u_{\epsilon})^{N}\mathrm{d}x\rightharpoonup \delta_{0}$ weakly in the sense of measure as $\epsilon\rightarrow0$.
\end{proof}

\subsection{Blow-up analysis}

Let
\begin{align}\label{eq4.2}
    r_{\epsilon}^{N}=\lambda_{\epsilon}c_{\epsilon}^{-\frac{N}{N-1}}e^{-\lambda_{N}(1-\frac{\beta}{N}-\epsilon)c_{\epsilon}^{\frac{N}{N-1}}}.
\end{align}
We have
\begin{align}\label{eq4.3}
    r_{\epsilon}\rightarrow0\;\;\mathrm{as}\;\;\epsilon\rightarrow0.
\end{align}

In fact, for any $i\in\R$,
\begin{align}
    c_{\epsilon}^{i}r_{\epsilon}^{N}&=c_{\epsilon}^{i-\frac{N}{N-1}}e^{-\lambda_{N}(1-\frac{\beta}{N}-\epsilon)c_{\epsilon}^{\frac{N}{N-1}}}\int_{\Omega}F^{o}(x)^{-\beta}u_{\epsilon}^{\frac{N}{N-1}}e^{\lambda_{N}(1-\frac{\beta}{N}-\epsilon)u_{\epsilon}^{\frac{N}{N-1}}}\;\mathrm{d}x\notag\\
    &\leq c_{\epsilon}^{i}e^{-\frac{\lambda_{N}}{2}(1-\frac{\beta}{N}-\epsilon)c_{\epsilon}^{\frac{N}{N-1}}}\int_{\Omega}F^{o}(x)^{-\beta}e^{\frac{\lambda_{N}}{2}(1-\frac{\beta}{N}-\epsilon)u_{\epsilon}^{\frac{N}{N-1}}}\;\mathrm{d}x\notag\\
    &\leq Cc_{\epsilon}^{i}e^{-\frac{\lambda_{N}}{2}(1-\frac{\beta}{N}-\epsilon)c_{\epsilon}^{\frac{N}{N-1}}}\rightarrow0,\notag
\end{align}
as $\epsilon\rightarrow0$. Here we use the H\"{o}lder inequality and Theorem 1.1 in \cite{17}.

We now distinguish two cases to proceed.

$\emph{\textbf{Case\;1.}}$ $\frac{F^{o}(x_{\epsilon})^{1-\frac{\beta}{N}}}{r_{\epsilon}}\rightarrow+\infty$.

Define
\begin{align}
    \Psi_{\epsilon}(x)=c_{\epsilon}^{-1}u_{\epsilon}(x_{\epsilon}+r_{\epsilon}F^{o}(x_{\epsilon})^{\frac{\beta}{N}}x)\notag
\end{align}
and
\begin{align}
    \Phi_{\epsilon}(x)=c_{\epsilon}^{\frac{N}{N-1}}(\Psi_{\epsilon}(x)-1)=c_{\epsilon}^{\frac{1}{N-1}}(u_{\epsilon}(x_{\epsilon}+r_{\epsilon}F^{o}(x_{\epsilon})^{\frac{\beta}{N}}x)-c_{\epsilon}).\notag
\end{align}
on $\Omega_{\epsilon}=\{x\in\R^{N}:x_{\epsilon}+r_{\epsilon}F^{o}(x_{\epsilon})^{\frac{\beta}{N}}x\in\Omega\}$.

A straightforward calculation shows  $\Psi_{\epsilon}$ is a weak solution to
\begin{align}\label{eq4.4}
    &-Q_{N}(\Psi_{\epsilon})(x)-\gamma\|u_{\epsilon}\|_{p}^{N-p}c_{\epsilon}^{p-N}r_{\epsilon}^{N}F^{o}(x_{\epsilon})^{\beta}\Psi_{\epsilon}(x)^{p-1}\notag\\
    =&c_{\epsilon}^{-N}F^{o}(x_{\epsilon})^{\beta}F^{o}(x_{\epsilon}+r_{\epsilon}F^{o}(x_{\epsilon})^{\frac{\beta}{N}}x)^{-\beta}\Psi_{\epsilon}(x)^{\frac{1}{N-1}}e^{\lambda_{N}(1-\frac{\beta}{N}-\epsilon)c_{\epsilon}^{\frac{N}{N-1}}(\Psi_{\epsilon}(x)^{\frac{N}{N-1}}-1)}
\end{align}
in $\Omega_{\epsilon}$. Since $0\leq\Psi_{\epsilon}\leq1$ and $F^{o}(x_{\epsilon})^{\beta}F^{o}(x_{\epsilon}+r_{\epsilon}F^{o}(x_{\epsilon})^{\frac{\beta}{N}}x)^{-\beta}=1+o_{\epsilon}(1)$, we have
\begin{align}
    &c_{\epsilon}^{-N}F^{o}(x_{\epsilon})^{\beta}F^{o}(x_{\epsilon}+r_{\epsilon}F^{o}(x_{\epsilon})^{\frac{\beta}{N}}x)^{-\beta}\Psi_{\epsilon}(x)^{\frac{1}{N-1}}e^{\lambda_{N}(1-\frac{\beta}{N}-\epsilon)c_{\epsilon}^{\frac{N}{N-1}}(\Psi_{\epsilon}(x)^{\frac{N}{N-1}}-1)}\notag\\
    \leq&c_{\epsilon}^{-N}+o_{\epsilon}(1)\rightarrow0\notag
\end{align}
and
\begin{align}
    &\left(\int_{\Omega_{\epsilon}}\left(\gamma\|u_{\epsilon}\|_{p}^{N-p}c_{\epsilon}^{p-N}r_{\epsilon}^{N}F^{o}(x_{\epsilon})^{\beta}\Psi_{\epsilon}(x)^{p-1}\right)^{\frac{p}{p-1}}\;\mathrm{d}x\right)^{\frac{p-1}{p}}\notag\\
    =&\gamma\|u_{\epsilon}\|_{p}^{N-1}c_{\epsilon}^{1-N}r_{\epsilon}^{\frac{N}{p}}F^{o}(x_{\epsilon})^{\frac{\beta}{p}}\rightarrow0\notag
\end{align}
as $\epsilon\rightarrow0$. Then applying elliptic regularity theory to \eqref{eq4.4}, we have $\Psi_{\epsilon}\rightarrow\Psi$ in $C^{0}(\R^{N})\cap C_{\mathrm{loc}}^{1}(\R^{N})$, where $\Psi$ is a weak solution to
\begin{align}\label{eq4.5}
    -Q_{N}(\Psi)=0
\end{align}
in $\R^{N}$. Since $0\leq\Psi\leq1$ and $\Psi(0)=1$, the Liouville theorem leads to $\Psi\equiv1$.

Also $\Phi_{\epsilon}$ is a weak solution to
\begin{align}\label{eq4.6}
    &-Q_{N}(\Phi_{\epsilon})(x)-\gamma\|u_{\epsilon}\|_{p}^{N-p}c_{\epsilon}^{p}r_{\epsilon}^{N}F^{o}(x_{\epsilon})^{\beta}\Psi_{\epsilon}(x)^{p-1}\notag\\
    =&F^{o}(x_{\epsilon})^{\beta}F^{o}(x_{\epsilon}+r_{\epsilon}F^{o}(x_{\epsilon})^{\frac{\beta}{N}}x)^{-\beta}\Psi_{\epsilon}(x)^{\frac{1}{N-1}}e^{\lambda_{N}(1-\frac{\beta}{N}-\epsilon)c_{\epsilon}^{\frac{N}{N-1}}(\Psi_{\epsilon}(x)^{\frac{N}{N-1}}-1)}
\end{align}
in $\Omega_{\epsilon}$. When $p>N$, for some $R>0$ and sufficiently small $\epsilon$,
\begin{align}
    \|u_{\epsilon}\|_{p}^{N-p}&\leq\left(\int_{W_{Rr_{\epsilon}F^{o}(x_{\epsilon})^{\beta/N}}^{x_{\epsilon}}}u_{\epsilon}(x)^{p}\;\mathrm{d}x\right)^{\frac{N-p}{p}}\notag\\
    &=c_{\epsilon}^{N-p}r_{\epsilon}^{\frac{N(N-p)}{p}}F^{o}(x_{\epsilon})^{\frac{\beta(N-p)}{p}}\left(\int_{W_{R}}\Psi_{\epsilon}(x)^{p}\;\mathrm{d}x\right)^{\frac{N-p}{p}}.\notag
\end{align}
Thus
\begin{align}
    \gamma\|u_{\epsilon}\|_{p}^{N-p}c_{\epsilon}^{p}r_{\epsilon}^{N}F^{o}(x_{\epsilon})^{\beta}\Psi_{\epsilon}(x)^{p-1}\leq\gamma c_{\epsilon}^{N}r_{\epsilon}^{\frac{N^{2}}{p}}F^{o}(x_{\epsilon})^{\frac{\beta N}{p}}\left(\int_{W_{R}}\Psi_{\epsilon}(x)^{p}\;\mathrm{d}x\right)^{\frac{N-p}{p}}\rightarrow0\notag
\end{align}
as $\epsilon\rightarrow0$. When $1<p\leq N$, obviously, $\gamma\|u_{\epsilon}\|_{p}^{N-p}c_{\epsilon}^{p}r_{\epsilon}^{N}F^{o}(x_{\epsilon})^{\beta}\Psi_{\epsilon}(x)^{p-1}\rightarrow0$ as $\epsilon\rightarrow0$. Meanwhile, by the mean value theorem,
\begin{align}
    c_{\epsilon}^{\frac{N}{N-1}}(\Psi_{\epsilon}(x)^{\frac{N}{N-1}}-1)&=u_{\epsilon}(x_{\epsilon}+r_{\epsilon}F^{o}(x_{\epsilon})^{\frac{\beta}{N}}x)^{\frac{N}{N-1}}-c_{\epsilon}^{\frac{N}{N-1}}\notag\\
    &=\frac{N}{N-1}\xi_{\epsilon}^{\frac{1}{N-1}}\left(u_{\epsilon}(x_{\epsilon}+r_{\epsilon}F^{o}(x_{\epsilon})^{\frac{\beta}{N}}x)-c_{\epsilon}\right)\notag\\
    &=\frac{N}{N-1}(\frac{\xi_{\epsilon}}{c_{\epsilon}})^{\frac{1}{N-1}}\Phi_{\epsilon}(x),\notag
\end{align}
where $\xi_{\epsilon}$ lies between $u_{\epsilon}(x_{\epsilon}+r_{\epsilon}F^{o}(x_{\epsilon})^{\frac{\beta}{N}}x)$ and $c_{\epsilon}$. Then applying elliptic regularity theory to \eqref{eq4.6}, we have $\Phi_{\epsilon}\rightarrow\Phi$ in $C^{0}(\R^{N})\cap C_{\mathrm{loc}}^{1}(\R^{N})$, where $\Phi$ is a weak solution to
\begin{align}\label{eq4.7}
    -Q_{N}(\Phi)=e^{\lambda_{N}(1-\frac{\beta}{N})\frac{N}{N-1}\Phi}
\end{align}
in $\R^{N}$ and $\Phi(0)=0=\sup_{\R^{N}}\Phi$.

On one hand, for any $R>0$,
\begin{align}
    &\int_{W_{R}}e^{\lambda_{N}(1-\frac{\beta}{N})\frac{N}{N-1}\Phi}\;\mathrm{d}x\notag\\
    =&\lim_{\epsilon\rightarrow0}\int_{W_{R}}e^{\lambda_{N}(1-\frac{\beta}{N}-\epsilon)(u_{\epsilon}(x_{\epsilon}+r_{\epsilon}F^{o}(x_{\epsilon})^{\frac{\beta}{N}}x)^{\frac{N}{N-1}}-c_{\epsilon}^{\frac{N}{N-1}})}\;\mathrm{d}x\notag\\
    =&\lim_{\epsilon\rightarrow0}\lambda_{\epsilon}^{-1}\int_{W_{Rr_{\epsilon}F^{o}(x_{\epsilon})^{\beta/N}}^{x_{\epsilon}}}c_{\epsilon}^{\frac{N}{N-1}}F^{o}(x_{\epsilon})^{-\beta}e^{\lambda_{N}(1-\frac{\beta}{N}-\epsilon)u_{\epsilon}(y)^{\frac{N}{N-1}}}\;\mathrm{d}y\notag\\
    =&\lim_{\epsilon\rightarrow0}\lambda_{\epsilon}^{-1}\int_{W_{Rr_{\epsilon}F^{o}(x_{\epsilon})^{\beta/N}}^{x_{\epsilon}}}u_{\epsilon}(y)^{\frac{N}{N-1}}F^{o}(y)^{-\beta}e^{\lambda_{N}(1-\frac{\beta}{N}-\epsilon)u_{\epsilon}(y)^{\frac{N}{N-1}}}\;\mathrm{d}y\notag\\
    \leq&1,\notag
\end{align}
which leads to
\begin{align}
    \int_{\R^{N}}e^{\lambda_{N}(1-\frac{\beta}{N})\frac{N}{N-1}\Phi}\;\mathrm{d}x\leq1.\notag
\end{align}
On the other hand, by the results in \cite{6}, we have
\begin{align}
    \int_{\R^{N}}e^{\lambda_{N}(1-\frac{\beta}{N})\frac{N}{N-1}\Phi}\;\mathrm{d}x=\left(1-\frac{\beta}{N}\right)^{1-N},\notag
\end{align}
which is a contradiction. Thus this case can not occur.

$\emph{\textbf{Case\;2.}}$ $\frac{F^{o}(x_{\epsilon})^{1-\frac{\beta}{N}}}{r_{\epsilon}}\leq C$ for some constant $C$.

Define
\begin{align}\label{eq4.8}
    \Psi_{\epsilon}(x)=c_{\epsilon}^{-1}u_{\epsilon}(x_{\epsilon}+r_{\epsilon}^{\frac{N}{N-\beta}}x),
\end{align}
and
\begin{align}\label{eq4.9}
    \Phi_{\epsilon}(x)=c_{\epsilon}^{\frac{N}{N-1}}(\Psi_{\epsilon}(x)-1)=c_{\epsilon}^{\frac{1}{N-1}}(u_{\epsilon}(x_{\epsilon}+r_{\epsilon}^{\frac{N}{N-\beta}}x)-c_{\epsilon}).
\end{align}
on $\Omega_{\epsilon}=\{x\in\R^{N}:x_{\epsilon}+r_{\epsilon}^{\frac{N}{N-\beta}}x\in\Omega\}$.

\begin{lemma}\label{le4.2}
    Let $\Psi_{\epsilon}$ and $\Phi_{\epsilon}$ be defined as in \eqref{eq4.8} and \eqref{eq4.9}, then

    (i) $\Psi_{\epsilon}\rightarrow1$ in $C^{0}(\R^{N})\cap C_{\mathrm{loc}}^{1}(\R^{N})$;

    (ii) $\Phi_{\epsilon}\rightarrow\Phi$ in $C^{0}(\R^{N})\cap C_{\mathrm{loc}}^{1}(\R^{N}\backslash\{0\})$, where
    $$
    \Phi(x)=-\frac{N-1}{\lambda_{N}(1-\frac{\beta}{N})}\log\left(1+\frac{\lambda_{N}F^{o}(x)^{\frac{N}{N-1}(1-\frac{\beta}{N})}}{N^{\frac{N}{N-1}}(1-\frac{\beta}{N})^{\frac{1}{N-1}}}\right).
    $$
    Moreover,
    $$
    \int_{\R^{N}}F^{o}(x)^{-\beta}e^{\lambda_{N}(1-\frac{\beta}{N})\frac{N}{N-1}\Phi}\;\mathrm{d}x=1.
    $$
\end{lemma}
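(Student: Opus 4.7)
The plan is to derive the elliptic PDEs satisfied by the rescaled functions, pass to the limit via quasilinear regularity, identify $\Phi$ through a classification theorem, and finally verify the mass identity by direct integration in Wulff polar coordinates. The Case 2 scaling $r_{\epsilon}^{N/(N-\beta)}$ is chosen precisely so that the volume factor and the singular weight combine as $F^{o}(x)^{-\beta}\,\mathrm{d}x=r_{\epsilon}^{-N}F^{o}(y)^{-\beta}\,\mathrm{d}y$ under $y=x_{\epsilon}+r_{\epsilon}^{N/(N-\beta)}x$; this together with the defining identity \eqref{eq4.2} absorbs the exponential prefactor from the blow-up normalization and is the algebraic reason why the limit equation retains the singular weight $F^{o}(x)^{-\beta}$.

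For part (i), a direct computation using \eqref{eq3.2} and the transformation rule $Q_{N}(u_{\epsilon})(y)=c_{\epsilon}^{N-1}r_{\epsilon}^{-N^{2}/(N-\beta)}Q_{N}(\Psi_{\epsilon})(x)$ yields an equation for $\Psi_{\epsilon}$ whose right-hand side is $O(c_{\epsilon}^{-N})$ on compact sets (using $0\le\Psi_{\epsilon}\le1$), while the lower-order $\Psi_{\epsilon}^{p-1}$ coefficient is $o_{\epsilon}(1)$ exactly as in Case 1. The regularity results of Serrin \cite{21} and Tolksdorf \cite{24} then give $\Psi_{\epsilon}\to\Psi$ in $C^{0}(\R^{N})\cap C^{1}_{\mathrm{loc}}(\R^{N})$ with $-Q_{N}(\Psi)=0$ and $\Psi(0)=\sup\Psi=1$, so Liouville forces $\Psi\equiv1$.

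For part (ii), the mean value theorem (as in Case 1) converts $c_{\epsilon}^{N/(N-1)}(\Psi_{\epsilon}^{N/(N-1)}-1)$ into $\frac{N}{N-1}(\xi_{\epsilon}/c_{\epsilon})^{1/(N-1)}\Phi_{\epsilon}$ with $\xi_{\epsilon}/c_{\epsilon}\to1$ by (i). The rescaled weight equals $r_{\epsilon}^{-\beta N/(N-\beta)}F^{o}(\tilde{x}_{\epsilon}+x)^{-\beta}$ where $\tilde{x}_{\epsilon}:=x_{\epsilon}/r_{\epsilon}^{N/(N-\beta)}$ is bounded in Case 2; provided $\tilde{x}_{\epsilon}\to0$, passing to the limit produces
\[-Q_{N}(\Phi)=F^{o}(x)^{-\beta}\exp\!\left(\frac{N\lambda_{N}}{N-1}\left(1-\frac{\beta}{N}\right)\Phi\right)\ \mathrm{in}\ \R^{N}\backslash\{0\},\quad \Phi(0)=0=\sup_{\R^{N}}\Phi.\]
The explicit formula stated in the lemma is then extracted from the classification of entire solutions in \cite{6}; regularity away from the origin gives $C^{1}_{\mathrm{loc}}(\R^{N}\backslash\{0\})$ convergence, and local boundedness near $0$ upgrades this to $C^{0}(\R^{N})$.

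For the mass identity, the change of variables together with \eqref{eq4.2} rewrites $\int_{W_{R}}F^{o}(x)^{-\beta}e^{\lambda_{N}(1-\beta/N)\frac{N}{N-1}\Phi}\,\mathrm{d}x$ as the limit of $\lambda_{\epsilon}^{-1}\int_{W_{Rr_{\epsilon}^{N/(N-\beta)}}(x_{\epsilon})}F^{o}(y)^{-\beta}u_{\epsilon}^{N/(N-1)}e^{\lambda_{N}(1-\beta/N-\epsilon)u_{\epsilon}^{N/(N-1)}}\,\mathrm{d}y\le1$, so letting $R\to\infty$ gives $\le1$. Inserting the explicit $\Phi$ and integrating in Wulff polar coordinates via Lemma \ref{le2.1}(v) reduces (after the substitution $s=a F^{o}(x)^{(N-\beta)/(N-1)}$ with $a^{N-1}=\kappa_{N}/(1-\beta/N)$) to the Beta integral $\int_{0}^{\infty}s^{N-2}(1+s)^{-N}\,\mathrm{d}s=1/(N-1)$, which evaluates to exactly $1$. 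The main obstacle I anticipate is justifying $\tilde{x}_{\epsilon}\to0$ in Step 3 so that the limit weight is centered at the origin: a blow-up point $\tilde{x}_{\epsilon}\to\bar x\ne0$ would correspond to a non-singular limit problem of the type already shown impossible in the Case 1 analysis, so this should follow from a careful contradiction argument along the same lines, but nailing down this centering rigorously is the delicate step.
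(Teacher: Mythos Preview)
Your treatment of (i) and the skeleton of (ii) match the paper's. The genuine gap is in the centering step. You propose to show $\tilde{x}_{\epsilon}:=x_{\epsilon}/r_{\epsilon}^{N/(N-\beta)}\to0$ by arguing that a nonzero limit $\bar{x}$ would produce ``a non-singular limit problem of the type already shown impossible in the Case~1 analysis.'' This is not correct: if $\tilde{x}_{\epsilon}\to\bar{x}\neq0$ the limit equation is $-Q_{N}(\Phi)=F^{o}(x+\bar{x})^{-\beta}e^{\lambda_{N}(1-\beta/N)\frac{N}{N-1}\Phi}$, which is still \emph{singular} (at $-\bar{x}$) and is \emph{not} the unweighted Liouville equation from Case~1. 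The Case~1 contradiction hinges on the identity $\int_{\R^{N}}e^{\lambda_{N}(1-\beta/N)\frac{N}{N-1}\Phi}\,\mathrm{d}x=(1-\beta/N)^{1-N}>1$ from \cite{6} for the \emph{non}-weighted equation; no such identity is available for a shifted singular weight, so your proposed route to $\bar{x}=0$ does not go through as written.

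The paper sidesteps this entirely. It passes to a subsequential limit $\tilde{x}_{\epsilon}\to x^{*}$ with $x^{*}$ \emph{a priori} arbitrary, derives the limit equation with weight $F^{o}(x+x^{*})^{-\beta}$, proves the bound $\int_{\R^{N}}F^{o}(x+x^{*})^{-\beta}e^{\lambda_{N}(1-\beta/N)\frac{N}{N-1}\Phi}\,\mathrm{d}x\le1$ by your change of variables, and then applies the \emph{singular} classification theorem of Lu--Shen--Xue--Zhu \cite[Theorem~4.1]{Lu-Shen-Xue-Zhu}, which under this mass bound and the normalization $\Phi(0)=\sup\Phi=0$ forces $x^{*}=0$, the explicit formula, and the mass identity all at once. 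Your citation of \cite{6} for the classification is misplaced: \cite{6} treats only the unweighted anisotropic $N$-Liouville equation (hence its role in Case~1), whereas the weighted equation here requires \cite{Lu-Shen-Xue-Zhu}. Your direct Beta-integral evaluation of the mass to exactly $1$ is correct and gives a nice independent check, but with the right classification result in hand it becomes redundant.
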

\begin{proof}

    (i)A straightforward calculation shows  $\Psi_{\epsilon}$ is a weak solution to
    \begin{align}\label{eq4.10}
        &-Q_{N}(\Psi_{\epsilon})(x)-\gamma\|u_{\epsilon}\|_{p}^{N-p}c_{\epsilon}^{p-N}r_{\epsilon}^{\frac{N^{2}}{N-\beta}}\Psi_{\epsilon}(x)^{p-1}\notag\\
        =&c_{\epsilon}^{-N}F^{o}(x+r_{\epsilon}^{-\frac{N}{N-\beta}}x_{\epsilon})^{-\beta}\Psi_{\epsilon}(x)^{\frac{1}{N-1}}e^{\lambda_{N}(1-\frac{\beta}{N}-\epsilon)c_{\epsilon}^{\frac{N}{N-1}}(\Psi_{\epsilon}(x)^{\frac{N}{N-1}}-1)}
    \end{align}
    in $\Omega_{\epsilon}$. We can assume $r_{\epsilon}^{-\frac{N}{N-\beta}}x_{\epsilon}\rightarrow x^{*}$, then by $0\leq\Psi_{\epsilon}\leq1$ and Lemma \ref{le2.1},
    \begin{align}
        c_{\epsilon}^{-N}F^{o}(x+r_{\epsilon}^{-\frac{N}{N-\beta}}x_{\epsilon})^{-\beta}\Psi_{\epsilon}(x)^{\frac{1}{N-1}}e^{\lambda_{N}(1-\frac{\beta}{N}-\epsilon)c_{\epsilon}^{\frac{N}{N-1}}(\Psi_{\epsilon}(x)^{\frac{N}{N-1}}-1)}\rightarrow0\notag
    \end{align}
    and
    \begin{align}
        \left(\int_{\Omega_{\epsilon}}\left(\gamma\|u_{\epsilon}\|_{p}^{N-p}c_{\epsilon}^{p-N}r_{\epsilon}^{\frac{N^{2}}{N-\beta}}\Psi_{\epsilon}(x)^{p-1}\right)^{\frac{p}{p-1}}\;\mathrm{d}x\right)^{\frac{p-1}{p}}=\gamma\|u_{\epsilon}\|_{p}^{N-1}c_{\epsilon}^{1-N}r_{\epsilon}^{\frac{N^{2}}{p(N-\beta)}}\rightarrow0\notag
    \end{align}
    as $\epsilon\rightarrow0$. Then applying elliptic regularity theory to \eqref{eq4.10}, we have $\Psi_{\epsilon}\rightarrow\Psi$ in $C^{0}(\R^{N})\cap C_{\mathrm{loc}}^{1}(\R^{N})$, where $\Psi$ is a weak solution to
    \begin{align}\label{eq4.11}
        -Q_{N}(\Psi)=0
    \end{align}
    in $\R^{N}$. Since $0\leq\Psi\leq1$ and $\Psi(0)=1$, the Liouville theorem leads to $\Psi\equiv1$.

    (ii) Also $\Phi_{\epsilon}$ is a weak solution to
    \begin{align}\label{eq4.12}
        &-Q_{N}(\Phi_{\epsilon})(x)-\gamma\|u_{\epsilon}\|_{p}^{N-p}c_{\epsilon}^{p}r_{\epsilon}^{\frac{N^{2}}{N-\beta}}\Psi_{\epsilon}(x)^{p-1}\notag\\
        =&F^{o}(x+r_{\epsilon}^{-\frac{N}{N-\beta}}x_{\epsilon})^{-\beta}\Psi_{\epsilon}(x)^{\frac{1}{N-1}}e^{\lambda_{N}(1-\frac{\beta}{N}-\epsilon)c_{\epsilon}^{\frac{N}{N-1}}(\Psi_{\epsilon}(x)^{\frac{N}{N-1}}-1)}
    \end{align}
    in $\Omega_{\epsilon}$. When $p>N$, for some $R>0$ and sufficiently small $\epsilon$,
    \begin{align}
        \|u_{\epsilon}\|_{p}^{N-p}&\leq\left(\int_{W_{Rr_{\epsilon}^{N/(N-\beta)}}^{x_{\epsilon}}}u_{\epsilon}(x)^{p}\;\mathrm{d}x\right)^{\frac{N-p}{p}}\notag\\
        &=c_{\epsilon}^{N-p}r_{\epsilon}^{\frac{N^{2}(N-p)}{(N-\beta)p}}\left(\int_{W_{R}}\Psi_{\epsilon}(x)^{p}\;\mathrm{d}x\right)^{\frac{N-p}{p}}.\notag
    \end{align}
    Thus
    \begin{align}
        \gamma\|u_{\epsilon}\|_{p}^{N-p}c_{\epsilon}^{p}r_{\epsilon}^{\frac{N^{2}}{N-\beta}}\Psi_{\epsilon}(x)^{p-1}\leq\gamma c_{\epsilon}^{N}r_{\epsilon}^{\frac{N^{3}}{(N-\beta)p}}\left(\int_{W_{R}}\Psi_{\epsilon}(x)^{p}\;\mathrm{d}x\right)^{\frac{N-p}{p}}\rightarrow0\notag
    \end{align}
    as $\epsilon\rightarrow0$. When $1<p\leq N$, $\gamma\|u_{\epsilon}\|_{p}^{N-p}c_{\epsilon}^{p}r_{\epsilon}^{\frac{N^{2}}{N-\beta}}\Psi_{\epsilon}(x)^{p-1}\rightarrow0$ as $\epsilon\rightarrow0$ is obvious. Meanwhile, by the mean value theorem,
    \begin{align}\label{eq4.13}
        c_{\epsilon}^{\frac{N}{N-1}}(\Psi_{\epsilon}(x)^{\frac{N}{N-1}}-1)&=u_{\epsilon}(x_{\epsilon}+r_{\epsilon}^{\frac{N}{N-\beta}}x)^{\frac{N}{N-1}}-c_{\epsilon}^{\frac{N}{N-1}}\notag\\
        &=\frac{N}{N-1}\xi_{\epsilon}^{\frac{1}{N-1}}\left(u_{\epsilon}(x_{\epsilon}+r_{\epsilon}^{\frac{N}{N-\beta}}x)-c_{\epsilon}\right)\notag\\
        &=\frac{N}{N-1}(\frac{\xi_{\epsilon}}{c_{\epsilon}})^{\frac{1}{N-1}}\Phi_{\epsilon}(x),
    \end{align}
    where $\xi_{\epsilon}$ lies between $u_{\epsilon}(x_{\epsilon}+r_{\epsilon}^{\frac{N}{N-\beta}}x)$ and $c_{\epsilon}$. Applying elliptic regularity theory to \eqref{eq4.12}, we have $\Phi_{\epsilon}\rightarrow\Phi$ in $C^{0}(\R^{N})\cap C_{\mathrm{loc}}^{1}(\R^{N}\backslash\{-x^{*}\})$, where $\Phi$ is a weak solution to
\begin{align}\label{eq4.14}
    -Q_{N}(\Phi)=F^{o}(x+x^{*})^{-\beta}e^{\lambda_{N}(1-\frac{\beta}{N})\frac{N}{N-1}\Phi}
\end{align}
in $\R^{N}\backslash \{-x^{*}\}$.

For any $R>F^{o}(x^{*})+1$,
\begin{align}
    &\int_{W_{R}^{-x^{*}}}F^{o}(x+x^{*})^{-\beta}e^{\lambda_{N}(1-\frac{\beta}{N})\frac{N}{N-1}\Phi}\;\mathrm{d}x\notag\\
    =&\lim_{\epsilon\rightarrow0}\int_{W_{R}^{-x^{*}}}F^{o}(x+r_{\epsilon}^{-\frac{N}{N-\beta}}x_{\epsilon})^{-\beta}e^{\lambda_{N}(1-\frac{\beta}{N}-\epsilon)(u_{\epsilon}(x_{\epsilon}+r_{\epsilon}^{\frac{N}{N-\beta}}x)^{\frac{N}{N-1}}-c_{\epsilon}^{\frac{N}{N-1}})}\;\mathrm{d}x\notag\\
    =&\lim_{\epsilon\rightarrow0}\lambda_{\epsilon}^{-1}\int_{W_{2Rr_{\epsilon}^{N/(N-\beta)}}}c_{\epsilon}^{\frac{N}{N-1}}F^{o}(y)^{-\beta}e^{\lambda_{N}(1-\frac{\beta}{N}-\epsilon)u_{\epsilon}(y)^{\frac{N}{N-1}}}\;\mathrm{d}y\notag\\
    =&\lim_{\epsilon\rightarrow0}\lambda_{\epsilon}^{-1}\int_{W_{2Rr_{\epsilon}^{N/(N-\beta)}}}u_{\epsilon}(y)^{\frac{N}{N-1}}F^{o}(y)^{-\beta}e^{\lambda_{N}(1-\frac{\beta}{N}-\epsilon)u_{\epsilon}(y)^{\frac{N}{N-1}}}\;\mathrm{d}y\notag\\
    \leq&1,\notag
\end{align}
which leads to
\begin{align}
    \int_{\R^{N}}F^{o}(x+x^{*})^{-\beta}e^{\lambda_{N}(1-\frac{\beta}{N})\frac{N}{N-1}\Phi}\;\mathrm{d}x\leq1.\notag
\end{align}
Thus by Theorem 4.1 in \cite{Lu-Shen-Xue-Zhu}, we obtain $x^{*}=0$,
$$
\Phi(x)=-\frac{N-1}{\lambda_{N}(1-\frac{\beta}{N})}\log\left(1+\frac{\lambda_{N}F^{o}(x)^{\frac{N}{N-1}(1-\frac{\beta}{N})}}{N^{\frac{N}{N-1}}(1-\frac{\beta}{N})^{\frac{1}{N-1}}}\right)
$$
and
$$
\int_{\R^{N}}F^{o}(x)^{-\beta}e^{\lambda_{N}(1-\frac{\beta}{N})\frac{N}{N-1}\Phi}\;\mathrm{d}x=1.
$$
\end{proof}

Define $u_{\epsilon,a}=\min\{u_{\epsilon},ac_{\epsilon}\}$. Then we have the following:
\begin{lemma}\label{le4.3}
    For any $0<a<1$, there holds
    $$
    \lim_{\epsilon\rightarrow0}\int_{\Omega}F(\nabla u_{\epsilon,a})^{N}\;\mathrm{d}x=a.
    $$
\end{lemma}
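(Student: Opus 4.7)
The plan is to use $u_{\epsilon,a}\in W_{0}^{1,N}(\Omega)$ as a test function in the Euler--Lagrange equation \eqref{eq3.2}. Since $\nabla u_{\epsilon,a}=\nabla u_{\epsilon}$ on $\{u_{\epsilon}\le ac_{\epsilon}\}$ and vanishes elsewhere, the identity $\nabla F(\xi)\cdot\xi = F(\xi)$ from Lemma \ref{le2.1}(iii) together with the $0$-homogeneity of $\nabla F$ reduces the left-hand side to $\int_{\Omega}F(\nabla u_{\epsilon,a})^{N}\,\mathrm{d}x$ and yields
\begin{equation*}
\int_{\Omega} F(\nabla u_{\epsilon,a})^{N}\,\mathrm{d}x = \gamma\|u_{\epsilon}\|_{p}^{N-p}\int_{\Omega} u_{\epsilon}^{p-1} u_{\epsilon,a}\,\mathrm{d}x + \int_{\Omega}\frac{u_{\epsilon,a}}{u_{\epsilon}}\,\mathrm{d}\mu_{\epsilon},
\end{equation*}
where $\mathrm{d}\mu_{\epsilon}:=\lambda_{\epsilon}^{-1}F^{o}(x)^{-\beta}u_{\epsilon}^{N/(N-1)}e^{\lambda_{N}(1-\frac{\beta}{N}-\epsilon)u_{\epsilon}^{N/(N-1)}}\,\mathrm{d}x$ is, by the very definition of $\lambda_{\epsilon}$ in \eqref{eq3.2}, a probability measure on $\Omega$. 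The first term on the right is bounded by $\gamma\|u_{\epsilon}\|_{p}^{N}\to 0$ because $u_{\epsilon}\to 0$ in $L^{p}(\Omega)$ by Lemma \ref{le4.1}, so the proof reduces to showing $\int_{\Omega}(u_{\epsilon,a}/u_{\epsilon})\,\mathrm{d}\mu_{\epsilon}\to a$.

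For this I would split $\Omega$ at the concentration scale, writing $\Omega = W_{Rr_{\epsilon}^{N/(N-\beta)}}^{x_{\epsilon}}\cup\bigl(\Omega\setminus W_{Rr_{\epsilon}^{N/(N-\beta)}}^{x_{\epsilon}}\bigr)$ for fixed but arbitrary $R>0$. Under the substitution $y=x_{\epsilon}+r_{\epsilon}^{N/(N-\beta)}x$, the ratio $u_{\epsilon,a}(y)/u_{\epsilon}(y)=\min\{1,a/\Psi_{\epsilon}(x)\}$ converges uniformly to $a$ on $W_{R}$ by Lemma \ref{le4.2}(i) (this crucially uses $a<1$), while the change-of-variables computation already performed in the proof of Lemma \ref{le4.2}(ii) shows that the pulled-back density converges in $L^{1}(W_{R})$ to $F^{o}(x)^{-\beta}e^{\lambda_{N}(1-\frac{\beta}{N})\frac{N}{N-1}\Phi(x)}$. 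Combining these gives
\begin{equation*}
\lim_{\epsilon\to 0}\int_{W_{Rr_{\epsilon}^{N/(N-\beta)}}^{x_{\epsilon}}}\frac{u_{\epsilon,a}}{u_{\epsilon}}\,\mathrm{d}\mu_{\epsilon} = a\int_{W_{R}} F^{o}(x)^{-\beta}e^{\lambda_{N}(1-\frac{\beta}{N})\frac{N}{N-1}\Phi(x)}\,\mathrm{d}x,
\end{equation*}
which tends to $a$ as $R\to\infty$ by the normalization $\int_{\R^{N}}F^{o}(x)^{-\beta}e^{\lambda_{N}(1-\frac{\beta}{N})\frac{N}{N-1}\Phi}\,\mathrm{d}x=1$ in Lemma \ref{le4.2}(ii). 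On the complement, the trivial bound $u_{\epsilon,a}/u_{\epsilon}\le 1$ controls the remainder by $\mu_{\epsilon}\bigl(\Omega\setminus W_{Rr_{\epsilon}^{N/(N-\beta)}}^{x_{\epsilon}}\bigr)=1-\mu_{\epsilon}\bigl(W_{Rr_{\epsilon}^{N/(N-\beta)}}^{x_{\epsilon}}\bigr)$, which vanishes as $\epsilon\to 0$ then $R\to\infty$ by the same concentration statement. Taking $\liminf$ and $\limsup$ in $\epsilon$ and sending $R\to\infty$ pinches the limit to $a$.

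The main technical point is ensuring that the uniform convergence $\Psi_{\epsilon}\to 1$ interacts cleanly with the pulled-back measure near the rescaled origin: the factor $F^{o}(x)^{-\beta}$ is singular there, but the assumption $\beta<N$ provides local integrability, and the uniform boundedness of $\Phi_{\epsilon}$ on bounded sets furnished by Lemma \ref{le4.2}(ii) supplies the dominating envelope needed to justify interchanging the limit and the integral. A minor bookkeeping issue is that Lemma \ref{le4.2} uses balls centered at the origin in the rescaled variable rather than at $x_{\epsilon}$, but this is harmless since the relation $r_{\epsilon}^{-N/(N-\beta)}x_{\epsilon}\to 0$ is already established there.
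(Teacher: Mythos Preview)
Your proof is correct and follows essentially the same strategy as the paper: test the Euler--Lagrange equation \eqref{eq3.2} against $u_{\epsilon,a}$, use Lemma \ref{le2.1}(iii) to reduce the left-hand side to $\int_{\Omega}F(\nabla u_{\epsilon,a})^{N}\,\mathrm{d}x$, discard the $\gamma$-term via $u_{\epsilon}\to 0$ in $L^{p}$, and then pass to the blow-up scale using Lemma \ref{le4.2}. The one substantive stylistic difference is in how you obtain the \emph{upper} bound: the paper repeats the lower-bound argument with $u_{\epsilon}-u_{\epsilon,a}$ in place of $u_{\epsilon,a}$ to get $\liminf_{\epsilon\to 0}\int_{\Omega}F(\nabla(u_{\epsilon}-u_{\epsilon,a}))^{N}\,\mathrm{d}x\ge 1-a$, and then uses $\int_{\Omega}F(\nabla u_{\epsilon})^{N}\,\mathrm{d}x\to 1$ to force $\limsup\le a$. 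Your probability-measure framing $\int_{\Omega}(u_{\epsilon,a}/u_{\epsilon})\,\mathrm{d}\mu_{\epsilon}$ with $\mu_{\epsilon}(\Omega)=1$ is a bit more direct, since $a\le u_{\epsilon,a}/u_{\epsilon}\le 1$ pointwise already gives $\liminf\ge a$ for free, and the upper bound follows from $\mu_{\epsilon}\bigl(\Omega\setminus W_{Rr_{\epsilon}^{N/(N-\beta)}}^{x_{\epsilon}}\bigr)\to 0$ without invoking the complementary truncation.

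One small caveat: your phrase ``dominating envelope'' is slightly imprecise, because the singularity of $F^{o}(x+r_{\epsilon}^{-N/(N-\beta)}x_{\epsilon})^{-\beta}$ moves with $\epsilon$, so there is no single $L^{1}$ majorant. What actually works is that the exponential factor converges \emph{uniformly} on $W_{R}$ (by Lemma \ref{le4.2}), while $\int_{W_{R}}F^{o}(x+z_{\epsilon})^{-\beta}\,\mathrm{d}x=\int_{W_{R}^{z_{\epsilon}}}F^{o}(y)^{-\beta}\,\mathrm{d}y$ is uniformly bounded and the family is uniformly integrable since $\beta<N$; this (or a change of variables plus the uniform continuity of the limit) justifies the $L^{1}$ convergence. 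The paper's proof glosses over the same point, so this is not a gap relative to the original.
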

\begin{proof}
    By the equation \eqref{eq3.2}, Lemma \ref{le2.1} and Lemma \ref{le4.2}, we have for any $R>0$,
    \begin{align}
        \int_{\Omega}F(\nabla u_{\epsilon,a})^{N}\;\mathrm{d}x=&\int_{\Omega}F(\nabla u_{\epsilon})^{N-1}\nabla F(\nabla u_{\epsilon})\cdot\nabla u_{\epsilon,a}\;\mathrm{d}x\notag\\
        =&-\int_{\Omega}\mathrm{div}\left(F(\nabla u_{\epsilon})^{N-1}\nabla F(\nabla u_{\epsilon})\right)u_{\epsilon,a}\;\mathrm{d}x\notag\\
        =&\int_{\Omega}\lambda_{\epsilon}^{-1}F^{o}(x)^{-\beta}u_{\epsilon}^{\frac{1}{N-1}}e^{\lambda_{N}(1-\frac{\beta}{N}-\epsilon)u_{\epsilon}^{\frac{N}{N-1}}}u_{\epsilon,a}\;\mathrm{d}x\notag\\
        &+\int_{\Omega}\gamma\|u_{\epsilon}\|_{p}^{N-p}u_{\epsilon}^{p-1}u_{\epsilon,a}\;\mathrm{d}x\notag\\
        =&\int_{\{u_{\epsilon}\leq ac_{\epsilon}\}}\lambda_{\epsilon}^{-1}F^{o}(x)^{-\beta}u_{\epsilon}^{\frac{1}{N-1}}e^{\lambda_{N}(1-\frac{\beta}{N}-\epsilon)u_{\epsilon}^{\frac{N}{N-1}}}u_{\epsilon,a}\;\mathrm{d}x\notag\\
        &+\int_{\{u_{\epsilon}>ac_{\epsilon}\}}\lambda_{\epsilon}^{-1}F^{o}(x)^{-\beta}u_{\epsilon}^{\frac{1}{N-1}}e^{\lambda_{N}(1-\frac{\beta}{N}-\epsilon)u_{\epsilon}^{\frac{N}{N-1}}}u_{\epsilon,a}\;\mathrm{d}x\notag\\
        &+\int_{\Omega}\gamma\|u_{\epsilon}\|_{p}^{N-p}u_{\epsilon}^{p-1}u_{\epsilon,a}\;\mathrm{d}x\notag\\
        \geq&\int_{W_{Rr_{\epsilon}^{N/(N-\beta)}}^{x_{\epsilon}}}ac_{\epsilon}\lambda_{\epsilon}^{-1}F^{o}(x)^{-\beta}u_{\epsilon}^{\frac{1}{N-1}}e^{\lambda_{N}(1-\frac{\beta}{N}-\epsilon)u_{\epsilon}^{\frac{N}{N-1}}}\;\mathrm{d}x+o_{\epsilon}(1)\notag
    \end{align}
    for $\epsilon>0$ small enough. Making the change in variable $x=x_{\epsilon}+r_{\epsilon}^{\frac{N}{N-\beta}}y$, we get
    \begin{align}
        &\int_{W_{Rr_{\epsilon}^{N/(N-\beta)}}^{x_{\epsilon}}}ac_{\epsilon}\lambda_{\epsilon}^{-1}F^{o}(x)^{-\beta}u_{\epsilon}^{\frac{1}{N-1}}e^{\lambda_{N}(1-\frac{\beta}{N}-\epsilon)u_{\epsilon}^{\frac{N}{N-1}}}\;\mathrm{d}x\notag\\
        =&\int_{W_{R}}aF^{o}(y+r_{\epsilon}^{-\frac{N}{N-\beta}}x_{\epsilon})^{-\beta}\Psi_{\epsilon}(y)^{\frac{1}{N-1}}e^{\lambda_{N}(1-\frac{\beta}{N}-\epsilon)c_{\epsilon}^{\frac{N}{N-1}}(\Psi_{\epsilon}(y)^{\frac{N}{N-1}}-1)}\;\mathrm{d}y.\notag
    \end{align}
    Thus by Lemma \ref{le4.2} and \eqref{eq4.13},
    \begin{align}
        \liminf_{\epsilon\rightarrow0}\int_{\Omega}F(\nabla u_{\epsilon,a})^{N}\;\mathrm{d}x\geq\int_{W_{R}}aF^{o}(x)^{-\beta}e^{\lambda_{N}(1-\frac{\beta}{N})\frac{N}{N-1}\Phi}\;\mathrm{d}x.\notag
    \end{align}
    Letting $R\rightarrow+\infty$, we obtain
    $$
    \liminf_{\epsilon\rightarrow0}\int_{\Omega}F(\nabla u_{\epsilon,a})^{N}\;\mathrm{d}x\geq a.
    $$
    Similarly we have
    $$
    \liminf_{\epsilon\rightarrow0}\int_{\Omega}F(\nabla(u_{\epsilon}-u_{\epsilon,a})))^{N}\;\mathrm{d}x\geq 1-a.
    $$
    Combining with
    \begin{align}
        \int_{\Omega}F(\nabla u_{\epsilon,a})^{N}\;\mathrm{d}x&=\int_{\Omega}F(\nabla u_{\epsilon})^{N}\;\mathrm{d}x-\int_{\Omega}F(\nabla(u_{\epsilon}-u_{\epsilon,a})))^{N}\;\mathrm{d}x\notag\\
        &=1+\gamma\|u_{\epsilon}\|_{p}^{N}-\int_{\Omega}F(\nabla(u_{\epsilon}-u_{\epsilon,a})))^{N}\;\mathrm{d}x,\notag
    \end{align}
    we obtain
    $$
    \limsup_{\epsilon\rightarrow0}\int_{\Omega}F(\nabla u_{\epsilon,a})^{N}\;\mathrm{d}x\leq a.
    $$
    Therefore,
    $$
    \lim_{\epsilon\rightarrow0}\int_{\Omega}F(\nabla u_{\epsilon,a})^{N}\;\mathrm{d}x=a.
    $$
\end{proof}

As a consequence of Lemma \ref{le4.3}, we have the following:
\begin{lemma}\label{le4.4}
    $$
    \Lambda=\lim_{\epsilon\rightarrow0}\int_{\Omega}\frac{e^{\lambda_{N}(1-\frac{\beta}{N}-\epsilon)u_{\epsilon}^{\frac{N}{N-1}}}}{F^{o}(x)^{\beta}}\;\mathrm{d}x\leq\int_{\Omega}\frac{1}{F^{o}(x)^{\beta}}\;\mathrm{d}x+\limsup_{\epsilon\rightarrow0}\frac{\lambda_{\epsilon}}{c_{\epsilon}^{N/(N-1)}}.
    $$
\end{lemma}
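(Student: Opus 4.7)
The equality $\Lambda = \lim_{\epsilon\to 0}\int_\Omega F^o(x)^{-\beta}e^{\lambda_N(1-\beta/N-\epsilon)u_\epsilon^{N/(N-1)}}\,\mathrm{d}x$ is immediate: $u_\epsilon$ attains $\Lambda_\epsilon$ by Lemma \ref{le3.1} and $\lim_{\epsilon\to 0}\Lambda_\epsilon = \Lambda$ by \eqref{eq3.4}. My plan for the inequality is to fix a parameter $a\in(0,1)$, split $\Omega$ into the super-level set $\{u_\epsilon > ac_\epsilon\}$ and its complement, estimate each piece separately, and let $a\to 1^-$ at the end.

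On the super-level set I exploit the crude lower bound $u_\epsilon^{N/(N-1)}\geq a^{N/(N-1)}c_\epsilon^{N/(N-1)}$ to insert a factor of $u_\epsilon^{N/(N-1)}$ into the integrand and then invoke the identity for $\lambda_\epsilon$ from \eqref{eq3.2}:
\begin{align*}
\int_{\{u_\epsilon > ac_\epsilon\}}\frac{e^{\lambda_N(1-\beta/N-\epsilon)u_\epsilon^{N/(N-1)}}}{F^o(x)^\beta}\,\mathrm{d}x
\;\leq\; \frac{1}{a^{N/(N-1)}c_\epsilon^{N/(N-1)}}\int_\Omega\frac{u_\epsilon^{N/(N-1)}e^{\lambda_N(1-\beta/N-\epsilon)u_\epsilon^{N/(N-1)}}}{F^o(x)^\beta}\,\mathrm{d}x
\;=\; \frac{\lambda_\epsilon}{a^{N/(N-1)}c_\epsilon^{N/(N-1)}}.
\end{align*}
This produces the $\lambda_\epsilon/c_\epsilon^{N/(N-1)}$ term.

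On the sub-level set, $u_\epsilon$ coincides with the truncation $u_{\epsilon,a}$, so the contribution is dominated by $\int_\Omega F^o(x)^{-\beta}e^{\lambda_N(1-\beta/N-\epsilon)u_{\epsilon,a}^{N/(N-1)}}\,\mathrm{d}x$, and I aim to show this converges to $\int_\Omega F^o(x)^{-\beta}\,\mathrm{d}x$ via Vitali's convergence theorem. Pointwise convergence of the integrand to $F^o(x)^{-\beta}$ is immediate because $u_0\equiv 0$ (Lemma \ref{le4.1}) forces $u_{\epsilon,a}\to 0$ a.e. For uniform integrability, Lemma \ref{le4.3} together with the strong $L^p$ convergence $u_{\epsilon,a}\to 0$ gives $\|u_{\epsilon,a}\|_{\gamma,p,F}^N\to a<1$; setting $v_\epsilon=u_{\epsilon,a}/\|u_{\epsilon,a}\|_{\gamma,p,F}$ and applying Hölder's inequality with conjugate exponents $s,s'$ chosen so that $s\beta<N$ and $s'(1-\beta/N)a^{1/(N-1)}<1$ (simultaneously achievable precisely because $a<1$), the bound $u_{\epsilon,a}^{N/(N-1)}\leq(a+o(1))^{1/(N-1)}v_\epsilon^{N/(N-1)}$ combined with the subcritical anisotropic Trudinger--Moser inequality \eqref{eq1.5} of \cite{17} applied to $v_\epsilon$ bounds $\int_\Omega e^{s'\lambda_N(1-\beta/N-\epsilon)u_{\epsilon,a}^{N/(N-1)}}\,\mathrm{d}x$ uniformly in $\epsilon$, while the complementary factor $(\int_E F^o(x)^{-s\beta}\,\mathrm{d}x)^{1/s}$ vanishes uniformly as $|E|\to 0$.

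Combining the two pieces and taking $\limsup_{\epsilon\to 0}$ yields $\Lambda\leq \int_\Omega F^o(x)^{-\beta}\,\mathrm{d}x + a^{-N/(N-1)}\limsup_{\epsilon\to 0}\lambda_\epsilon/c_\epsilon^{N/(N-1)}$, and letting $a\to 1^-$ concludes the proof. I expect the main obstacle to be the Vitali argument on $\{u_\epsilon\leq ac_\epsilon\}$: one must manage two closeness parameters simultaneously, namely $a$ approaching $1$ and the Hölder exponent $s'$ approaching the critical value $(1-\beta/N)^{-1}$, and exploit precisely the strict inequality $\|u_{\epsilon,a}\|_{\gamma,p,F}^N\to a<1$ furnished by Lemma \ref{le4.3} to leave enough room for \eqref{eq1.5} to apply before passing to the limit $a\to 1^-$.
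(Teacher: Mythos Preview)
Your proposal is correct and follows essentially the same route as the paper: split $\Omega$ at the level $ac_\epsilon$, bound the super-level piece by $\lambda_\epsilon/(ac_\epsilon)^{N/(N-1)}$ using the definition of $\lambda_\epsilon$, show the sub-level piece converges to $\int_\Omega F^o(x)^{-\beta}\,\mathrm{d}x$ via $L^i$-boundedness (from Lemma~\ref{le4.3} plus a Trudinger--Moser inequality) together with a.e.\ convergence, and then let $a\to 1^-$. The only cosmetic difference is that the paper obtains the $L^i$-bound by citing the singular anisotropic inequality (Theorem~2.1 in \cite{35}) directly, whereas you reach the same conclusion by H\"older-separating the weight $F^o(x)^{-\beta}$ from the exponential and invoking the nonsingular inequality \eqref{eq1.5}; both yield the needed uniform integrability.
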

\begin{proof}
    For any $0<a<1$, there holds
    \begin{align}\label{eq4.15}
        &\int_{\Omega}\frac{e^{\lambda_{N}(1-\frac{\beta}{N}-\epsilon)u_{\epsilon}^{\frac{N}{N-1}}}}{F^{o}(x)^{\beta}}\;\mathrm{d}x\notag\\
        =&\int_{\{u_{\epsilon}\leq ac_{\epsilon}\}}\frac{e^{\lambda_{N}(1-\frac{\beta}{N}-\epsilon)u_{\epsilon}^{\frac{N}{N-1}}}}{F^{o}(x)^{\beta}}\;\mathrm{d}x+\int_{\{u_{\epsilon}>ac_{\epsilon}\}}\frac{e^{\lambda_{N}(1-\frac{\beta}{N}-\epsilon)u_{\epsilon}^{\frac{N}{N-1}}}}{F^{o}(x)^{\beta}}\;\mathrm{d}x\notag\\
        <&\int_{\{u_{\epsilon}\leq ac_{\epsilon}\}}\frac{e^{\lambda_{N}(1-\frac{\beta}{N}-\epsilon)u_{\epsilon,a}^{\frac{N}{N-1}}}}{F^{o}(x)^{\beta}}\;\mathrm{d}x+(ac_{\epsilon})^{-\frac{N}{N-1}}\int_{\{u_{\epsilon}>ac_{\epsilon}\}}u_{\epsilon}^{\frac{N}{N-1}}\frac{e^{\lambda_{N}(1-\frac{\beta}{N}-\epsilon)u_{\epsilon}^{\frac{N}{N-1}}}}{F^{o}(x)^{\beta}}\;\mathrm{d}x\notag\\
        \leq&\int_{\Omega}\frac{e^{\lambda_{N}(1-\frac{\beta}{N}-\epsilon)u_{\epsilon,a}^{\frac{N}{N-1}}}}{F^{o}(x)^{\beta}}\;\mathrm{d}x+\frac{\lambda_{\epsilon}}{(ac_{\epsilon})^{N/(N-1)}}.
    \end{align}
    By Theorem 2.1 in \cite{35} and Lemma \ref{le4.3}, $F^{o}(x)^{-\beta}e^{\lambda_{N}(1-\frac{\beta}{N}-\epsilon)u_{\epsilon,a}^{\frac{N}{N-1}}}$ is bounded in $L^{i}(\Omega)$ for some $i>1$. Notice that $u_{\epsilon,a}$ converges to $0$ almost everywhere in $\Omega$, hence $F^{o}(x)^{-\beta}e^{\lambda_{N}(1-\frac{\beta}{N}-\epsilon)u_{\epsilon,a}^{\frac{N}{N-1}}}$ converges to $F^{o}(x)^{-\beta}$ in $L^{1}(\Omega)$. Letting $\epsilon\rightarrow0$ and $a\rightarrow1$ in \eqref{eq4.15}, we conclude the lemma.
\end{proof}

It follows from Lemma \ref{le4.4} that for any $\theta<\frac{N}{N-1}$,
\begin{align}\label{eq4.16}
    \limsup_{\epsilon\rightarrow0}\frac{\lambda_{\epsilon}}{c_{\epsilon}^{\theta}}=+\infty.
\end{align}
In fact, $\limsup\limits_{\epsilon\rightarrow0}\frac{\lambda_{\epsilon}}{c_{\epsilon}^{N/(N-1)}}>0$ and notice that
$$\limsup_{\epsilon\rightarrow0}\frac{\lambda_{\epsilon}}{c_{\epsilon}^{\theta}}\geq\limsup_{\epsilon\rightarrow0}\frac{\lambda_{\epsilon}}{c_{\epsilon}^{N/(N-1)}}\cdot\liminf_{\epsilon\rightarrow0}c_{\epsilon}^{\frac{N}{N-1}-\theta}.$$
\begin{lemma}\label{le4.5}
    For any $1<i<N$, $c_{\epsilon}^{\frac{1}{N-1}}u_{\epsilon}\rightharpoonup G$ weakly in $W_{0}^{1,i}(\Omega)$, where $G$ is a weak solution to
    \begin{align}\label{eq4.17}
        -Q_{N}(G)=\delta_{0}+\gamma\|G\|_{p}^{N-p}G^{p-1}
    \end{align}
    in $\Omega$. Moreover, $c_{\epsilon}^{\frac{1}{N-1}}u_{\epsilon}\rightarrow G$ in $C^{0}(\Omega)\cap C_{\mathrm{loc}}^{1}(\Omega\backslash\{0\})$ and $G$ takes the form
    \begin{align}
        G(x)=-\frac{N}{\lambda_{N}}\log F^{o}(x)+A_{0}+\xi(x),\notag
    \end{align}
    where $A_{0}$ is a constant and $\xi\in C^{1}(\overline{\Omega})$ with $\xi(x)=O(F^{o}(x))$ as $x\rightarrow0$.
\end{lemma}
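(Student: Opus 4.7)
Set $w_\epsilon := c_\epsilon^{1/(N-1)} u_\epsilon$. By the $(N-1)$-homogeneity of $Q_N$ in the gradient, the Euler-Lagrange equation \eqref{eq3.2} rescales to
$$
-Q_N(w_\epsilon) = c_\epsilon \lambda_\epsilon^{-1} F^o(x)^{-\beta} u_\epsilon^{\frac{1}{N-1}} e^{\lambda_N(1-\frac{\beta}{N}-\epsilon) u_\epsilon^{\frac{N}{N-1}}} + \gamma \|w_\epsilon\|_p^{N-p} w_\epsilon^{p-1} =: f_\epsilon.
$$
The first step is to show that $f_\epsilon$ is uniformly bounded in $L^1(\Omega)$, which then yields, via the standard regularity theory for the anisotropic $N$-Laplacian with $L^1$ right-hand side, the $W_0^{1,i}(\Omega)$ bound on $w_\epsilon$ for each $1<i<N$. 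For a fixed $a\in(0,1)$, I split $\Omega=\{u_\epsilon>ac_\epsilon\}\cup\{u_\epsilon\leq ac_\epsilon\}$. On $\{u_\epsilon>ac_\epsilon\}$, the pointwise inequality $c_\epsilon u_\epsilon^{1/(N-1)}\leq a^{-1}u_\epsilon^{N/(N-1)}$ immediately gives the bound $1/a$ for the first term of $f_\epsilon$ through the definition of $\lambda_\epsilon$. On $\{u_\epsilon\leq ac_\epsilon\}$, I apply Lemma~\ref{le2.2} to the normalized truncation $u_{\epsilon,a}/\|u_{\epsilon,a}\|_{\gamma,p,F}$---whose weak limit is $0$ and whose norm tends to $a^{1/N}<1$ by Lemma~\ref{le4.3}---to obtain a uniform bound on the exponential integral, while the prefactor $c_\epsilon^{N/(N-1)}/\lambda_\epsilon$ is controlled along the subsequence by Lemma~\ref{le4.4}. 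The $\gamma$-term is handled using $\|u_\epsilon\|_p\to 0$ together with interpolation.

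By weak compactness, a subsequence satisfies $w_\epsilon\rightharpoonup G$ in $W_0^{1,i}(\Omega)$ and, by Rellich-Kondrachov, strongly in $L^q(\Omega)$ for every $q<\infty$. To pass to the limit in the equation tested against $\phi\in C_c^\infty(\Omega)$, I use the concentration analysis of Section~\ref{section4}: the identity $\lambda_\epsilon^{-1}\int F^o(x)^{-\beta}u_\epsilon^{N/(N-1)}e^{\lambda_N(\cdots)u_\epsilon^{N/(N-1)}}\,dx=1$, together with concentration inside the Wulff ball of radius $r_\epsilon^{N/(N-\beta)}$ around $x_\epsilon\to 0$ established in Lemma~\ref{le4.2}, shows that the first term of $f_\epsilon$ converges to $\phi(0)$, i.e.\ to $\delta_0$ in the distributional sense. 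The $\gamma$-term passes to $\gamma\|G\|_p^{N-p}G^{p-1}$ by strong $L^q$ convergence, and a standard monotonicity argument for the anisotropic $N$-Laplacian gives a.e.\ convergence of $\nabla w_\epsilon$, so that $G$ weakly solves \eqref{eq4.17}. Away from $0$, the exponential in $f_\epsilon$ is controlled by subcritical Trudinger-Moser bounds, hence $f_\epsilon$ is uniformly in $L^q_{\mathrm{loc}}(\Omega\setminus\{0\})$ for some $q>1$; the Serrin \cite{21} and Tolksdorf \cite{24} regularity theorems then provide uniform $C^{1,\alpha}_{\mathrm{loc}}(\Omega\setminus\{0\})$ bounds, and Arzel\`a-Ascoli upgrades the convergence to $C^0(\Omega)\cap C^1_{\mathrm{loc}}(\Omega\setminus\{0\})$.

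For the expansion of $G$ near $0$, note that by \cite{6} the fundamental solution $\Gamma(x):=-\frac{N}{\lambda_N}\log F^o(x)$ satisfies $-Q_N\Gamma=\delta_0$ in $\R^N$, so the equation for $G$ reads $-Q_N G = -Q_N\Gamma + h$ in a neighborhood of $0$, with $h\in L^\infty_{\mathrm{loc}}$. The regularity theory for the anisotropic $N$-Laplacian with an isolated logarithmic source (cf.\ \cite{21,24}) shows that $G-\Gamma$ is locally $C^1$ up to $0$; setting $A_0:=(G-\Gamma)(0)$ and $\xi(x):=(G-\Gamma)(x)-A_0$, the bound $\xi(x)=O(F^o(x))$ follows from this $C^1$ regularity via the mean value theorem. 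The main obstacle is the $L^1$ bound on $\{u_\epsilon\leq ac_\epsilon\}$ in Step 1, which must simultaneously exploit the subcritical Trudinger-Moser inequality on $u_{\epsilon,a}$ and the lower bound on $\lambda_\epsilon/c_\epsilon^{N/(N-1)}$ from Lemma~\ref{le4.4}; the expansion of $G$ near $0$ with $O(F^o(x))$ remainder is also delicate, since the nonlinearity of $Q_N$ prevents a literal subtraction of the fundamental solution.
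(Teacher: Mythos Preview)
Your strategy coincides with the paper's. The paper organizes the argument around a single claim \eqref{eq4.18}, namely
\[
\lim_{\epsilon\to 0}\int_\Omega c_\epsilon\lambda_\epsilon^{-1}F^o(x)^{-\beta}u_\epsilon^{1/(N-1)}e^{\lambda_N(1-\beta/N-\epsilon)u_\epsilon^{N/(N-1)}}\varphi\,\mathrm{d}x=\varphi(0)
\]
for every $\varphi\in C(\overline\Omega)$, proved by splitting $\Omega$ into $\{u_\epsilon\le ac_\epsilon\}$, the blow-up ball $W_{Rr_\epsilon^{N/(N-\beta)}}^{x_\epsilon}$, and the remainder; this simultaneously delivers the $L^1$ bound (take $\varphi\equiv 1$) and the distributional convergence to $\delta_0$. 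You separate these two conclusions into two steps, but invoke the same ingredients (Lemmas~\ref{le4.2}, \ref{le4.3}, \ref{le4.4}), and you are in fact more explicit than the paper about the need for a.e.\ gradient convergence to pass $Q_N(w_\epsilon)$ to the limit.

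One correction: on $\{u_\epsilon\le ac_\epsilon\}$ you appeal to Lemma~\ref{le2.2} on the normalized truncation, but that lemma is stated for a \emph{nonzero} weak limit $u$ and gives no information when the limit is $0$ (then $q_N(0)=1$ and the conclusion is vacuous). The correct tool---and the one the paper uses---is simply the singular anisotropic Trudinger-Moser inequality (Theorem~2.1 in \cite{35}) applied to $u_{\epsilon,a}$, whose anisotropic Dirichlet energy tends to $a<1$ by Lemma~\ref{le4.3}; combined with the control on $c_\epsilon^{N/(N-1)}/\lambda_\epsilon$ from \eqref{eq4.16} this gives the required smallness. For the expansion of $G$ near $0$, the paper does not argue in place either but refers to the argument of Proposition~5.1 in \cite{30}, which handles this singular structure by comparison rather than by subtracting the fundamental solution---which, as you correctly flag, the nonlinearity of $Q_N$ forbids.
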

\begin{proof}
     We claim  for any $\varphi\in C(\overline{\Omega})$, there holds
     \begin{align}\label{eq4.18}
         \lim_{\epsilon\rightarrow0}\int_{\Omega}c_{\epsilon}\lambda_{\epsilon}^{-1}F^{o}(x)^{-\beta}u_{\epsilon}^{\frac{1}{N-1}}e^{\lambda_{N}(1-\frac{\beta}{N}-\epsilon)u_{\epsilon}^{\frac{N}{N-1}}}\varphi\;\mathrm{d}x=\varphi(0).
     \end{align}
     In fact, for any $R>0$ and $0<a<1$,
     \begin{align}
         &\int_{\Omega}c_{\epsilon}\lambda_{\epsilon}^{-1}F^{o}(x)^{-\beta}u_{\epsilon}^{\frac{1}{N-1}}e^{\lambda_{N}(1-\frac{\beta}{N}-\epsilon)u_{\epsilon}^{\frac{N}{N-1}}}\varphi\;\mathrm{d}x\notag\\
         =&\int_{\{u_{\epsilon}\leq ac_{\epsilon}\}}c_{\epsilon}\lambda_{\epsilon}^{-1}F^{o}(x)^{-\beta}u_{\epsilon}^{\frac{1}{N-1}}e^{\lambda_{N}(1-\frac{\beta}{N}-\epsilon)u_{\epsilon}^{\frac{N}{N-1}}}\varphi\;\mathrm{d}x\notag\\
         &+\int_{W_{Rr_{\epsilon}^{N/(N-\beta)}}^{x_{\epsilon}}}c_{\epsilon}\lambda_{\epsilon}^{-1}F^{o}(x)^{-\beta}u_{\epsilon}^{\frac{1}{N-1}}e^{\lambda_{N}(1-\frac{\beta}{N}-\epsilon)u_{\epsilon}^{\frac{N}{N-1}}}\varphi\;\mathrm{d}x\notag\\
         &+\int_{\{u_{\epsilon}>ac_{\epsilon}\}\backslash W_{Rr_{\epsilon}^{N/(N-\beta)}}^{x_{\epsilon}}}c_{\epsilon}\lambda_{\epsilon}^{-1}F^{o}(x)^{-\beta}u_{\epsilon}^{\frac{1}{N-1}}e^{\lambda_{N}(1-\frac{\beta}{N}-\epsilon)u_{\epsilon}^{\frac{N}{N-1}}}\varphi\;\mathrm{d}x\notag
     \end{align}
     for $\epsilon>0$ small enough.

     We estimate the three integrals on the right hand respectively. By Theorem 2.1 in \cite{35}, Lemma \ref{le4.3} and \eqref{eq4.16},
     \begin{align}
         \int_{\{u_{\epsilon}\leq ac_{\epsilon}\}}c_{\epsilon}\lambda_{\epsilon}^{-1}F^{o}(x)^{-\beta}u_{\epsilon}^{\frac{1}{N-1}}e^{\lambda_{N}(1-\frac{\beta}{N}-\epsilon)u_{\epsilon}^{\frac{N}{N-1}}}\varphi\;\mathrm{d}x=o_{\epsilon}(1).\notag
     \end{align}
     By Lemma \ref{le4.2} and \eqref{eq4.13}, we have
     \begin{align}
         &\int_{W_{Rr_{\epsilon}^{N/(N-\beta)}}^{x_{\epsilon}}}c_{\epsilon}\lambda_{\epsilon}^{-1}F^{o}(x)^{-\beta}u_{\epsilon}^{\frac{1}{N-1}}e^{\lambda_{N}(1-\frac{\beta}{N}-\epsilon)u_{\epsilon}^{\frac{N}{N-1}}}\varphi\;\mathrm{d}x\notag\\
         =&\varphi(0)(1+o_{\epsilon}(1))\int_{W_{Rr_{\epsilon}^{N/(N-\beta)}}^{x_{\epsilon}}}c_{\epsilon}\lambda_{\epsilon}^{-1}F^{o}(x)^{-\beta}u_{\epsilon}^{\frac{1}{N-1}}e^{\lambda_{N}(1-\frac{\beta}{N}-\epsilon)u_{\epsilon}^{\frac{N}{N-1}}}\;\mathrm{d}x\notag\\
         =&\varphi(0)(1+o_{\epsilon}(1))\int_{W_{R}}F^{o}(x+r_{\epsilon}^{-\frac{N}{N-\beta}}x_{\epsilon})^{-\beta}\Psi_{\epsilon}(x)^{\frac{1}{N-1}}e^{\lambda_{N}(1-\frac{\beta}{N}-\epsilon)c_{\epsilon}^{\frac{N}{N-1}}(\Psi_{\epsilon}(x)^{\frac{N}{N-1}}-1)}\;\mathrm{d}x\notag\\
         \rightarrow&\varphi(0)\int_{W_{R}}F^{o}(x)^{-\beta}e^{\lambda_{N}(1-\frac{\beta}{N})\frac{N}{N-1}\Phi}\;\mathrm{d}x\notag
     \end{align}
     and
     \begin{align}
         &\int_{\{u_{\epsilon}>ac_{\epsilon}\}\backslash W_{Rr_{\epsilon}^{N/(N-\beta)}}^{x_{\epsilon}}}c_{\epsilon}\lambda_{\epsilon}^{-1}F^{o}(x)^{-\beta}u_{\epsilon}^{\frac{1}{N-1}}e^{\lambda_{N}(1-\frac{\beta}{N}-\epsilon)u_{\epsilon}^{\frac{N}{N-1}}}\varphi\;\mathrm{d}x\notag\\
         \leq&\|\varphi\|_{\infty}\left(\frac{1}{a}-\int_{W_{R}}F^{o}(x+r_{\epsilon}^{-\frac{N}{N-\beta}}x_{\epsilon})^{-\beta}\Psi_{\epsilon}(x)^{\frac{1}{N-1}}e^{\lambda_{N}(1-\frac{\beta}{N}-\epsilon)c_{\epsilon}^{\frac{N}{N-1}}(\Psi_{\epsilon}(x)^{\frac{N}{N-1}}-1)}\;\mathrm{d}x\right)\notag\\
         \rightarrow&\|\varphi\|_{\infty}\left(\frac{1}{a}-\int_{W_{R}}F^{o}(x)^{-\beta}e^{\lambda_{N}(1-\frac{\beta}{N})\frac{N}{N-1}\Phi}\;\mathrm{d}x\right)\notag
     \end{align}
     as $\epsilon\rightarrow0$. Letting $R\rightarrow+\infty$ and $a\rightarrow1$, we conclude \eqref{eq4.18}.

     In view of the equation \eqref{eq3.2}, $c_{\epsilon}^{\frac{1}{N-1}}u_{\epsilon}$ is a weak solution to
     \begin{align}\label{eq4.19}
         -Q_{N}(c_{\epsilon}^{\frac{1}{N-1}}u_{\epsilon})-c_{\epsilon}\gamma\|u_{\epsilon}\|_{p}^{N-p}u_{\epsilon}^{p-1}=c_{\epsilon}\lambda_{\epsilon}^{-1}F^{o}(x)^{-\beta}u_{\epsilon}^{\frac{1}{N-1}}e^{\lambda_{N}(1-\frac{\beta}{N}-\epsilon)u_{\epsilon}^{\frac{N}{N-1}}}
     \end{align}
     in $\Omega$. By \eqref{eq4.18}, $c_{\epsilon}\lambda_{\epsilon}^{-1}F^{o}(x)^{-\beta}u_{\epsilon}^{\frac{1}{N-1}}e^{\lambda_{N}(1-\frac{\beta}{N}-\epsilon)u_{\epsilon}^{\frac{N}{N-1}}}$ is bounded in $L^{1}(\Omega)$. Then by Lemma 2.2 in \cite{17}, $c_{\epsilon}^{\frac{1}{N-1}}u_{\epsilon}$ is bounded in $W_{0}^{1,i}(\Omega)$ for any $1<i<N$. Hence there exists some $G\in W_{0}^{1,i}(\Omega)$ such that $c_{\epsilon}^{\frac{1}{N-1}}u_{\epsilon}\rightharpoonup G$ weakly in $W_{0}^{1,i}(\Omega)$ for any $1<i<N$. Since \eqref{eq4.18} implies  $c_{\epsilon}\lambda_{\epsilon}^{-1}F^{o}(x)^{-\beta}u_{\epsilon}^{\frac{1}{N-1}}e^{\lambda_{N}(1-\frac{\beta}{N}-\epsilon)u_{\epsilon}^{\frac{N}{N-1}}}\rightharpoonup \delta_{0}$ weakly in the sense of measure as $\epsilon\rightarrow0$, in view of \eqref{eq4.19}, $G$ is a weak solution to \eqref{eq4.17}. Applying elliptic regularity theory, we get $c_{\epsilon}^{\frac{1}{N-1}}u_{\epsilon}\rightarrow G$ in $C^{0}(\Omega)\cap C_{\mathrm{loc}}^{1}(\Omega\backslash\{0\})$. Using the argument in the proof of Proposition 5.1 in \cite{30}, we obtain the form of $G$.
\end{proof}

\subsection{Capacity estimate}

In this subsection, we use the capacity estimate, which was first used by Li \cite{13} in this topic, to derive an upper bound of the integrals $\int_{\Omega}F^{o}(x)^{-\beta}e^{\lambda_{N}(1-\frac{\beta}{N}-\epsilon)u_{\epsilon}^{\frac{N}{N-1}}}\;\mathrm{d}x$.
\begin{lemma}\label{le4.6}
    $$
    \Lambda=\lim_{\epsilon\rightarrow0}\int_{\Omega}\frac{e^{\lambda_{N}(1-\frac{\beta}{N}-\epsilon)u_{\epsilon}^{\frac{N}{N-1}}}}{F^{o}(x)^{\beta}}\;\mathrm{d}x\leq\int_{\Omega}\frac{1}{F^{o}(x)^{\beta}}\;\mathrm{d}x+\frac{N\kappa_{N}}{N-\beta}e^{\lambda_{N}(1-\frac{\beta}{N})A_{0}+\sum_{k=1}^{N-1}\frac{1}{k}}.
    $$
\end{lemma}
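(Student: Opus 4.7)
By Lemma~\ref{le4.4}, it suffices to bound $\limsup_{\epsilon\to 0}\lambda_\epsilon/c_\epsilon^{N/(N-1)}$ from above by $\frac{N\kappa_N}{N-\beta}\exp\bigl(\lambda_N(1-\tfrac{\beta}{N})A_0+\sum_{k=1}^{N-1}\tfrac{1}{k}\bigr)$. I plan to obtain this via an anisotropic capacity estimate on a Wulff annulus, in the spirit of Li~\cite{13}. Fix $\delta>0$ small and $R>1$ large, and work on $\mathcal A_\epsilon := W_\delta^{x_\epsilon}\setminus W_{Rr_\epsilon^{N/(N-\beta)}}^{x_\epsilon}$. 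On the outer scale $u_\epsilon$ is governed by the Green-type limit $G$ from Lemma~\ref{le4.5}, while on the inner scale it is governed by the bubble $\Phi$ from Lemma~\ref{le4.2}; since $\Phi$ depends only on $F^o$, I write $\Phi(R)$ for its value at any point with $F^o=R$.

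Set $s_\epsilon := \sup_{\partial W_\delta^{x_\epsilon}} u_\epsilon$ and $i_\epsilon := \inf_{\partial W_{Rr_\epsilon^{N/(N-\beta)}}^{x_\epsilon}} u_\epsilon$. Lemma~\ref{le4.5} and the explicit form of $G$ yield
\begin{align*}
s_\epsilon &= c_\epsilon^{-1/(N-1)}\bigl(-\tfrac{N}{\lambda_N}\log\delta + A_0 + O(\delta) + o_\epsilon(1)\bigr),\\
i_\epsilon &= c_\epsilon + c_\epsilon^{-1/(N-1)}\bigl(\Phi(R) + o_\epsilon(1)\bigr).
\end{align*}
The anisotropic $N$-capacity of $\overline{W_{Rr_\epsilon^{N/(N-\beta)}}^{x_\epsilon}}$ in $W_\delta^{x_\epsilon}$, realized by the logarithmic competitor $\log(\delta/F^o(x-x_\epsilon))/\log(\delta/(Rr_\epsilon^{N/(N-\beta)}))$ and using Lemma~\ref{le2.1}(v) together with $F(\nabla F^o)=1$, equals $N\kappa_N/\bigl(\log(\delta/(Rr_\epsilon^{N/(N-\beta)}))\bigr)^{N-1}$. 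Because $(u_\epsilon-s_\epsilon)^+\in W_0^{1,N}(W_\delta^{x_\epsilon})$ and exceeds $i_\epsilon-s_\epsilon$ on the inner Wulff ball, this gives
\[
(i_\epsilon-s_\epsilon)^N\,\frac{N\kappa_N}{\bigl(\log(\delta/(Rr_\epsilon^{N/(N-\beta)}))\bigr)^{N-1}} \leq \int_{W_\delta^{x_\epsilon}} F(\nabla u_\epsilon)^N\;\mathrm{d}x = 1 + o_\epsilon(1),
\]
where the last equality uses $\int_\Omega F(\nabla u_\epsilon)^N\;\mathrm{d}x=1+\gamma\|u_\epsilon\|_p^N\to 1$ and the concentration in Lemma~\ref{le4.1}.

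Substituting the definition~\eqref{eq4.2} of $r_\epsilon$ and taking the $(N-1)$-st root, using the identity $(N\kappa_N)^{1/(N-1)}=\lambda_N/N$, the leading terms $\tfrac{\lambda_N}{N}c_\epsilon^{N/(N-1)}$ cancel on both sides. Expanding $(i_\epsilon-s_\epsilon)^{N/(N-1)}=c_\epsilon^{N/(N-1)}-\tfrac{N}{N-1}\bigl(A_0-\tfrac{N}{\lambda_N}\log\delta-\Phi(R)\bigr)+o(1)$ and using the large-$R$ asymptotics $\Phi(R)=-\tfrac{N}{\lambda_N}\log R+O(1)$ to match the $\log R$ divergences produced by the capacity, the resulting inequality on $\log(\lambda_\epsilon/c_\epsilon^{N/(N-1)})$ reads off in the limit $\epsilon\to 0$, then $R\to\infty$, then $\delta\to 0$ (in this order), giving the claimed constant.

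The main technical obstacle is twofold. First, the energy remainder $\int_{\mathcal A_\epsilon}F(\nabla u_\epsilon)^N\;\mathrm{d}x-1$ must be refined to order $o(c_\epsilon^{-N/(N-1)})$ so that the $O(1)$ constants actually survive the leading-order cancellation of $\tfrac{\lambda_N}{N}c_\epsilon^{N/(N-1)}$ terms; it is precisely at this refined level that the harmonic number $\sum_{k=1}^{N-1}\tfrac{1}{k}$ emerges, by matching the subleading Taylor expansion of $\log(1+cR^{(N-\beta)/(N-1)})$ in $\Phi(R)$ against the binomial expansion $(1+c_\epsilon^{-N/(N-1)}K)^{N/(N-1)}$ on the capacity side. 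Second, one has to ensure, along the chosen subsequence, that $\epsilon c_\epsilon^{N/(N-1)}$ remains bounded so that the cross term $-\tfrac{\lambda_N\epsilon}{N-\beta}c_\epsilon^{N/(N-1)}$ produced by the $\epsilon$-dependence in~\eqref{eq4.2} does not spoil the $O(1)$ analysis.
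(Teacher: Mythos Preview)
Your overall capacity approach matches the paper's, but the proposal as written has a genuine gap and one misidentification.

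\textbf{The gap.} Bounding the right-hand side of the capacity inequality by $\int_{W_\delta^{x_\epsilon}}F(\nabla u_\epsilon)^N\,\mathrm{d}x=1+o_\epsilon(1)$ is too crude: after the leading terms $\tfrac{\lambda_N}{N}c_\epsilon^{N/(N-1)}$ cancel, an unspecified $o_\epsilon(1)$ multiplied by $\log(\delta/(Rr_\epsilon^{N/(N-\beta)}))\sim c_\epsilon^{N/(N-1)}$ swamps all $O(1)$ constants, so no finite bound on $\log(\lambda_\epsilon/c_\epsilon^{N/(N-1)})$ can emerge. The paper does not attempt to sharpen $o_\epsilon(1)$ directly; instead it restricts the competitor to the annulus $\mathcal A_\epsilon$ and writes
\[
\int_{\mathcal A_\epsilon}F(\nabla u_\epsilon)^N\,\mathrm{d}x
=1+\gamma\|u_\epsilon\|_p^N-\int_{\Omega\setminus W_\delta^{x_\epsilon}}F(\nabla u_\epsilon)^N\,\mathrm{d}x-\int_{W_{Rr_\epsilon^{N/(N-\beta)}}^{x_\epsilon}}F(\nabla u_\epsilon)^N\,\mathrm{d}x,
\]
then computes each piece to order $c_\epsilon^{-N/(N-1)}$: the outer integral via $c_\epsilon^{1/(N-1)}u_\epsilon\to G$ and an integration by parts against the Green equation, the inner integral via the rescaling to $\Phi$. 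Your plan never mentions this decomposition, and it is the essential mechanism.

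\textbf{Where the harmonic number comes from.} It does \emph{not} arise from a Taylor expansion of $\log(1+cR^{(N-\beta)/(N-1)})$ matched against a binomial expansion; neither of those produces $\sum_{k=1}^{N-1}\tfrac{1}{k}$. In the paper it appears in the inner-ball energy through the explicit identity
\[
\int_0^T\frac{t^{N-1}}{(1+t)^N}\,\mathrm{d}t=\log(1+T)-\sum_{k=1}^{N-1}\frac{1}{k}\Bigl(\frac{T}{1+T}\Bigr)^k,
\]
applied with $T=(\kappa_N/(1-\tfrac{\beta}{N}))^{1/(N-1)}R^{(N-\beta)/(N-1)}$ after reducing $\int_{W_R}F(\nabla\Phi)^N\,\mathrm{d}x$ to a one-variable integral via the coarea formula and Lemma~\ref{le2.1}(v),(vi).

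\textbf{On your second obstacle.} The worry about $\epsilon c_\epsilon^{N/(N-1)}$ is unnecessary: substituting \eqref{eq4.2} produces $\tfrac{\lambda_N(1-\beta/N-\epsilon)}{N-\beta}c_\epsilon^{N/(N-1)}$ on the right, and after cancelling the $\epsilon=0$ part against the left the remaining term $-\tfrac{\lambda_N\epsilon}{N-\beta}c_\epsilon^{N/(N-1)}$ has the favourable sign for an \emph{upper} bound on $\log(\lambda_\epsilon/c_\epsilon^{N/(N-1)})$ and can simply be dropped.
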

\begin{proof}
    For any $R>0$, take $\delta>0$ such that $W_{2\delta}\subset\Omega$ and $\epsilon>0$ such that $Rr_{\epsilon}^{\frac{N}{N-\beta}}<\delta$. For $a,b\in\R$, we define a function space
    $$
    W_{\epsilon}(a,b)=\{u\in W^{1,N}(W_{\delta}^{x_{\epsilon}}\backslash\overline{W_{Rr_{\epsilon}^{N/(N-\beta)}}^{x_{\epsilon}}}):u|_{\partial W_{\delta}^{x_{\epsilon}}}=a,u|_{\partial W_{Rr_{\epsilon}^{N/(N-\beta)}}^{x_{\epsilon}}}=b\}.
    $$
    Let
    $$
    s_{\epsilon}=\sup_{\partial W_{\delta}^{x_{\epsilon}}}u_{\epsilon}\;\mathrm{and}\;i_{\epsilon}=\inf_{\partial W_{Rr_{\epsilon}^{N/(N-\beta)}}^{x_{\epsilon}}}u_{\epsilon}.
    $$
    By Lemma \ref{le4.2} and Lemma \ref{le4.5},
    $$
    s_{\epsilon}=c_{\epsilon}^{-\frac{1}{N-1}}\left(-\frac{N}{\lambda_{N}}\log\delta+A_{0}+o_{\delta}(1)+o_{\epsilon}(\delta)\right),
    $$
    $$
    i_{\epsilon}=c_{\epsilon}+c_{\epsilon}^{-\frac{1}{N-1}}\left(-\frac{N-1}{\lambda_{N}(1-\frac{\beta}{N})}\log(1+\frac{\lambda_{N}R^{\frac{N}{N-1}(1-\frac{\beta}{N})}}{N^{\frac{N}{N-1}}(1-\frac{\beta}{N})^{\frac{1}{N-1}}})+o_{\epsilon}(R^{-1})\right).
    $$
    Therefore $i_{\epsilon}>s_{\epsilon}$ for $\epsilon>0$ small enough.

    It is not difficult to see
    $$
    \inf_{u\in W_{\epsilon}(s_{\epsilon},i_{\epsilon})}\int_{W_{\delta}^{x_{\epsilon}}\backslash\overline{W_{Rr_{\epsilon}^{N/(N-\beta)}}^{x_{\epsilon}}}}F(\nabla u)^{N}\;\mathrm{d}x
    $$
    is attained by $h(x)$ satisfying
    \begin{align}
    \left\{\begin{array}{l}
    -Q_{N}(h)=0\;\;\;\;\;\mathrm{in}\;W_{\delta}^{x_{\epsilon}}\backslash\overline{W_{Rr_{\epsilon}^{N/(N-\beta)}}^{x_{\epsilon}}},\\
    h_{\partial W_{\delta}^{x_{\epsilon}}}=s_{\epsilon},\\
    h_{\partial W_{Rr_{\epsilon}^{N/(N-\beta)}}^{x_{\epsilon}}}=i_{\epsilon}.
    \end{array}\right.\notag
    \end{align}
    Then by the uniqueness of solution,
    \begin{align}
        h(x)=\frac{s_{\epsilon}(\log F^{o}(x-x_{\epsilon}))-\log(Rr_{\epsilon}^{N/(N-\beta)}))+i_{\epsilon}(\log\delta-\log F^{o}(x-x_{\epsilon}))}{\log\delta-\log(Rr_{\epsilon}^{N/(N-\beta)})}\notag
    \end{align}
    and
    \begin{align}\label{eq4.20}
        \int_{W_{\delta}^{x_{\epsilon}}\backslash\overline{W_{Rr_{\epsilon}^{N/(N-\beta)}}^{x_{\epsilon}}}}F(\nabla h)^{N}\;\mathrm{d}x=\frac{N\kappa_{N}(i_{\epsilon}-s_{\epsilon})^{N}}{(\log\delta-\log(Rr_{\epsilon}^{N/(N-\beta)}))^{N-1}}.
    \end{align}
    Notice that by the inequality $(1+t)^{m}\geq1+mt$ for $t\geq-1$ and $m\geq1$,
    \begin{align}\label{eq4.21}
        (i_{\epsilon}-s_{\epsilon})^{\frac{N}{N-1}}=&c_{\epsilon}^{\frac{N}{N-1}}\left(1+c_{\epsilon}^{-\frac{N}{N-1}}\left(-\frac{N-1}{\lambda_{N}(1-\frac{\beta}{N})}\log(1+\frac{\lambda_{N}R^{\frac{N}{N-1}(1-\frac{\beta}{N})}}{N^{\frac{N}{N-1}}(1-\frac{\beta}{N})^{\frac{1}{N-1}}})\right.\right.\notag\\
        &\left.\left.+\frac{N}{\lambda_{N}}\log\delta-A_{0}+o_{\epsilon}(R^{-1})+o_{\delta}(1)+o_{\epsilon}(\delta)\right)\right)^{\frac{N}{N-1}}\notag\\
        \geq&c_{\epsilon}^{\frac{N}{N-1}}+\frac{N}{N-1}\left(-\frac{N-1}{\lambda_{N}(1-\frac{\beta}{N})}\log(1+\frac{\lambda_{N}R^{\frac{N}{N-1}(1-\frac{\beta}{N})}}{N^{\frac{N}{N-1}}(1-\frac{\beta}{N})^{\frac{1}{N-1}}})\right.\notag\\
        &\left.+\frac{N}{\lambda_{N}}\log\delta-A_{0}+o_{\epsilon}(R^{-1})+o_{\delta}(1)+o_{\epsilon}(\delta)\right)
    \end{align}
    and by \eqref{eq4.2},
    \begin{align}\label{eq4.22}
        \log\delta-\log(Rr_{\epsilon}^{N/(N-\beta)})=\log\frac{\delta}{R}-\frac{1}{N-\beta}\log(\lambda_{\epsilon}c_{\epsilon}^{-\frac{N}{N-1}})+\frac{\lambda_{N}(1-\frac{\beta}{N}-\epsilon)c_{\epsilon}^{\frac{N}{N-1}}}{N-\beta}.
    \end{align}

    Now define $\overline{u}_{\epsilon}=\max\{s_{\epsilon},\min\{i_{\epsilon},u_{\epsilon}\}\}$. Then $\overline{u}_{\epsilon}\in W_{\epsilon}(s_{\epsilon},i_{\epsilon})$ and $F(\nabla\overline{u}_{\epsilon})\leq F(\nabla u_{\epsilon})$ almost everywhere in $W_{\delta}^{x_{\epsilon}}\backslash\overline{W_{Rr_{\epsilon}^{N/(N-\beta)}}^{x_{\epsilon}}}$ for $\epsilon>0$ small enough. Therefore
    \begin{align}\label{eq4.23}
        &\int_{W_{\delta}^{x_{\epsilon}}\backslash\overline{W_{Rr_{\epsilon}^{N/(N-\beta)}}^{x_{\epsilon}}}}F(\nabla h)^{N}\;\mathrm{d}x\notag\\
        \leq&\int_{W_{\delta}^{x_{\epsilon}}\backslash\overline{W_{Rr_{\epsilon}^{N/(N-\beta)}}^{x_{\epsilon}}}}F(\nabla\overline{u}_{\epsilon})^{N}\;\mathrm{d}x\notag\\
        \leq&\int_{W_{\delta}^{x_{\epsilon}}\backslash\overline{W_{Rr_{\epsilon}^{N/(N-\beta)}}^{x_{\epsilon}}}}F(\nabla u_{\epsilon})^{N}\;\mathrm{d}x\notag\\
        =&1+\gamma\|u_{\epsilon}\|_{p}^{N}-\int_{\Omega\backslash W_{\delta}^{x_{\epsilon}}}F(\nabla u_{\epsilon})^{N}\;\mathrm{d}x-\int_{W_{Rr_{\epsilon}^{N/(N-\beta)}}^{x_{\epsilon}}}F(\nabla u_{\epsilon})^{N}\;\mathrm{d}x.
    \end{align}
    We estimate the two integrals on the right hand respectively. By Lemma \ref{le2.1} and Lemma \ref{le4.5},
    \begin{align}\label{eq4.24}
        &\int_{\Omega\backslash W_{\delta}^{x_{\epsilon}}}F(\nabla u_{\epsilon})^{N}\;\mathrm{d}x\notag\\
        =&c_{\epsilon}^{-\frac{N}{N-1}}\left(\int_{\Omega\backslash W_{\delta}^{x_{\epsilon}}}F(\nabla G)^{N}\;\mathrm{d}x+o_{\epsilon}(\delta)\right)\notag\\
        =&c_{\epsilon}^{-\frac{N}{N-1}}\left(-\int_{\Omega\backslash W_{\delta}^{x_{\epsilon}}}\mathrm{div}(F(\nabla G)^{N-1}\nabla F(\nabla G))G\;\mathrm{d}x\right.\notag\\
        &\left.\;\;\;\;\;\;\;\;\;\;\;\;+\int_{\partial(\Omega\backslash W_{\delta}^{x_{\epsilon}})}GF(\nabla G)^{N-1}\nabla F(\nabla G)\cdot v\;\mathrm{d}\sigma+o_{\epsilon}(\delta)\right)\notag\\
        =&c_{\epsilon}^{-\frac{N}{N-1}}\left(\gamma\|G\|_{p}^{N}-\int_{\partial W_{\delta}^{x_{\epsilon}}}GF(\nabla G)^{N-1}\nabla F(\nabla G)\cdot v\;\mathrm{d}\sigma+o_{\delta}(1)+o_{\epsilon}(\delta)\right)\notag\\
        =&c_{\epsilon}^{-\frac{N}{N-1}}\left(\gamma\|G\|_{p}^{N}-\frac{N}{\lambda_{N}}\log\delta+A_{0}+o_{\delta}(1)+o_{\epsilon}(\delta)\right)
    \end{align}
    and
    \begin{align}\label{eq4.25}
        \|u_{\epsilon}\|_{p}^{N}=c_{\epsilon}^{-\frac{N}{N-1}}(\|G\|_{p}^{N}+o_{\epsilon}(1)).
    \end{align}
    Notice that for any $T>0$,
    \begin{align}
        I_{N}(T):&=\int_{0}^{T}\frac{t^{N-1}}{(1+t)^{N}}\;\mathrm{d}t\notag\\
        &=\frac{1}{1-N}\int_{0}^{T}t^{N-1}\;\mathrm{d}(1+t)^{1-N}\notag\\
        &=\frac{1}{1-N}(\frac{T}{1+T})^{N-1}+I_{N-1}(T).\notag
    \end{align}
    Since $I_{1}(T)=\log(1+T)$, by iteration, we have
    $$
    I_{N}(T)=\log(1+T)-\sum_{k=1}^{N-1}\frac{1}{k}(\frac{T}{1+T})^{k}.
    $$
    By Lemma \ref{le2.1}, Lemma \ref{le4.2} and the change in variable $t=(\frac{\kappa_{N}}{1-\frac{\beta}{N}})^{\frac{1}{N-1}}r^{\frac{N-\beta}{N-1}}$,
    \begin{align}\label{eq4.26}
        &\int_{W_{Rr_{\epsilon}^{N/(N-\beta)}}^{x_{\epsilon}}}F(\nabla u_{\epsilon})^{N}\;\mathrm{d}x\notag\\
        =&\frac{1}{c_{\epsilon}^{\frac{1}{N-1}}}\left(\int_{W_{R}}F(\nabla\Phi)^{N}\;\mathrm{d}x+o_{\epsilon}(R^{-1})\right)\notag\\
        =&(\frac{1}{c_{\epsilon}^{\frac{1}{N-1}}}\frac{N}{\lambda_{N}})^{N}\left(\int_{W_{R}}(\frac{F^{o}(x)^{\frac{N-\beta}{N-1}-1}}{(\frac{\kappa_{N}}{1-\frac{\beta}{N}})^{-\frac{1}{N-1}}+F^{o}(x)^{\frac{N-\beta}{N-1}}})^{N}\;\mathrm{d}x+o_{\epsilon}(R^{-1})\right)\notag\\
        =&(\frac{1}{c_{\epsilon}^{\frac{1}{N-1}}}\frac{N}{\lambda_{N}})^{N}\left(\int_{0}^{R}(\frac{r^{\frac{N-\beta}{N-1}-1}}{(\frac{\kappa_{N}}{1-\frac{\beta}{N}})^{-\frac{1}{N-1}}+r^{\frac{N-\beta}{N-1}}})^{N}\int_{\partial W_{r}}\frac{1}{\lvert\nabla F^{o}(x)\rvert}\;\mathrm{d}\sigma\;\mathrm{d}r+o_{\epsilon}(R^{-1})\right)\notag\\
        =&\frac{1}{c_{\epsilon}^{\frac{N}{N-1}}}\frac{N}{\lambda_{N}}\left(\int_{0}^{R}\frac{r^{\frac{N-\beta}{N-1}N-1}}{((\frac{\kappa_{N}}{1-\frac{\beta}{N}})^{-\frac{1}{N-1}}+r^{\frac{N-\beta}{N-1}})^{N}}\;\mathrm{d}r+o_{\epsilon}(R^{-1})\right)\notag\\
        =&\frac{1}{c_{\epsilon}^{\frac{N}{N-1}}}\frac{N-1}{\lambda_{N}(1-\frac{\beta}{N})}\left(\int_{0}^{(\frac{\kappa_{N}}{1-\frac{\beta}{N}})^{\frac{1}{N-1}}R^{\frac{N-\beta}{N-1}}}\frac{t^{N-1}}{(1+t)^{N}}\;\mathrm{d}t+o_{\epsilon}(R^{-1})\right)\notag\\
        =&\frac{1}{c_{\epsilon}^{\frac{N}{N-1}}}\frac{N-1}{\lambda_{N}(1-\frac{\beta}{N})}\left(\log\left(1+(\frac{\kappa_{N}}{1-\frac{\beta}{N}})^{\frac{1}{N-1}}R^{\frac{N-\beta}{N-1}}\right)-\sum_{k=1}^{N-1}\frac{1}{k}(\frac{(\frac{\kappa_{N}}{1-\frac{\beta}{N}})^{\frac{1}{N-1}}R^{\frac{N-\beta}{N-1}}}{1+(\frac{\kappa_{N}}{1-\frac{\beta}{N}})^{\frac{1}{N-1}}R^{\frac{N-\beta}{N-1}}})^{k}+o_{\epsilon}(R^{-1})\right)\notag\\
        =&\frac{1}{c_{\epsilon}^{\frac{N}{N-1}}}\left(\frac{N}{\lambda_{N}}\log R+\frac{1}{\lambda_{N}(1-\frac{\beta}{N})}\log\frac{\kappa_{N}}{1-\frac{\beta}{N}}-\frac{N-1}{\lambda_{N}(1-\frac{\beta}{N})}\sum_{k=1}^{N-1}\frac{1}{k}+O(\log R^{-1})+o_{\epsilon}(R^{-1})\right)
    \end{align}
    Hence, by \eqref{eq4.23}-\eqref{eq4.26} and the inequality $(1+t)^{m}\leq 1+mt$ for $t\geq-1$ and $m\in[0,1]$,
    \begin{align}\label{eq4.27}
        &\left(\int_{W_{\delta}^{x_{\epsilon}}\backslash\overline{W_{Rr_{\epsilon}^{N/(N-\beta)}}^{x_{\epsilon}}}}F(\nabla h)^{N}\;\mathrm{d}x\right)^{\frac{1}{N-1}}\notag\\
        \leq&\left(1-c_{\epsilon}^{-\frac{N}{N-1}}\left(\frac{N}{\lambda_{N}}\log\frac{R}{\delta}+\frac{1}{\lambda_{N}(1-\frac{\beta}{N})}\log\frac{\kappa_{N}}{1-\frac{\beta}{N}}\right.\right.\notag\\
        &\left.\left.-\frac{N-1}{\lambda_{N}(1-\frac{\beta}{N})}\sum_{k=1}^{N-1}\frac{1}{k}+A_{0}+O(\log R^{-1})+o_{\epsilon}(1)+o_{\delta}(1)+o_{\epsilon}(\delta)+o_{\epsilon}(R^{-1})\right)\right)^{\frac{1}{N-1}}\notag\\
        \leq&1-\frac{c_{\epsilon}^{-\frac{N}{N-1}}}{N-1}\left(\frac{N}{\lambda_{N}}\log\frac{R}{\delta}+\frac{1}{\lambda_{N}(1-\frac{\beta}{N})}\log\frac{\kappa_{N}}{1-\frac{\beta}{N}}\right.\notag\\
        &\left.-\frac{N-1}{\lambda_{N}(1-\frac{\beta}{N})}\sum_{k=1}^{N-1}\frac{1}{k}+A_{0}+O(\log R^{-1})+o_{\epsilon}(1)+o_{\delta}(1)+o_{\epsilon}(\delta)+o_{\epsilon}(R^{-1})\right)\notag\\
        :=&1+b(\epsilon,\delta,R)\notag\\
        \rightarrow&1
   \end{align}
    as $\epsilon\rightarrow0$ for each fixed $\delta>0$ and $R>0$.

    Finally, combining \eqref{eq4.20}-\eqref{eq4.22} and \eqref{eq4.27} together, we have
    \begin{align}
        &(N\kappa_{N})^{\frac{1}{N-1}}\left(c_{\epsilon}^{\frac{N}{N-1}}+\frac{N}{N-1}\left(-\frac{N-1}{\lambda_{N}(1-\frac{\beta}{N})}\log(1+\frac{\lambda_{N}R^{\frac{N}{N-1}(1-\frac{\beta}{N})}}{N^{\frac{N}{N-1}}(1-\frac{\beta}{N})^{\frac{1}{N-1}}})\right.\right.\notag\\
        &\left.\left.+\frac{N}{\lambda_{N}}\log\delta-A_{0}+o_{\epsilon}(R^{-1})+o_{\delta}(1)+o_{\epsilon}(\delta)\right)\right)\notag\\
        \leq&\left(\log\frac{\delta}{R}-\frac{1}{N-\beta}\log(\lambda_{\epsilon}c_{\epsilon}^{-\frac{N}{N-1}})+\frac{\lambda_{N}(1-\frac{\beta}{N}-\epsilon)c_{\epsilon}^{\frac{N}{N-1}}}{N-\beta}\right)\notag\\
        &\cdot\left(1-\frac{c_{\epsilon}^{-\frac{N}{N-1}}}{N-1}\left(\frac{N}{\lambda_{N}}\log\frac{R}{\delta}+\frac{1}{\lambda_{N}(1-\frac{\beta}{N})}\log\frac{\kappa_{N}}{1-\frac{\beta}{N}}\right.\right.\notag\\
        &\left.\left.-\frac{N-1}{\lambda_{N}(1-\frac{\beta}{N})}\sum_{k=1}^{N-1}\frac{1}{k}+A_{0}+O(\log R^{-1})+o_{\epsilon}(1)+o_{\delta}(1)+o_{\epsilon}(\delta)+o_{\epsilon}(R^{-1})\right)\right)\notag\\
        =&(1+b(\epsilon,\delta,R))\left(\log\frac{\delta}{R}-\frac{1}{N-\beta}\log(\lambda_{\epsilon}c_{\epsilon}^{-\frac{N}{N-1}})\right)+\frac{\lambda_{N}(1-\frac{\beta}{N}-\epsilon)c_{\epsilon}^{\frac{N}{N-1}}}{N-\beta}\notag\\
        &-\frac{\lambda_{N}(1-\frac{\beta}{N}-\epsilon)}{(N-\beta)(N-1)}\left(\frac{N}{\lambda_{N}}\log\frac{R}{\delta}+\frac{1}{\lambda_{N}(1-\frac{\beta}{N})}\log\frac{\kappa_{N}}{1-\frac{\beta}{N}}\right.\notag\\
        &\left.-\frac{N-1}{\lambda_{N}(1-\frac{\beta}{N})}\sum_{k=1}^{N-1}\frac{1}{k}+A_{0}+O(\log R^{-1})+o_{\epsilon}(1)+o_{\delta}(1)+o_{\epsilon}(\delta)+o_{\epsilon}(R^{-1})\right).\notag
    \end{align}
    Rearranging the terms and letting $\epsilon\rightarrow0$ first and then $\delta\rightarrow0$, $R\rightarrow+\infty$, we have
    $$
    \limsup_{\epsilon\rightarrow0}\log\frac{\lambda_{\epsilon}}{c_{\epsilon}^{N/(N-1)}}\leq\log\frac{N\kappa_{N}}{N-\beta}+\sum_{k=1}^{N-1}\frac{1}{k}+\lambda_{N}(1-\frac{\beta}{N})A_{0}.
    $$
    Therefore we conclude our desired inequality by Lemma \ref{le4.4}.
\end{proof}

\section{Proof of Theorem \ref{th1.1} and Theorem \ref{th1.4}}\label{section5}
\subsection{Proof of Theorem \ref{th1.1}}

In case $c_{\epsilon}$ is bounded, the equality \eqref{eq4.1} implies Theorem \ref{th1.1}. In case $c_{\epsilon}\rightarrow+\infty$, Theorem \ref{th1.1}  is followed from Lemma \ref{le4.6}.

\subsection{Proof of Theorem \ref{th1.4}}

In case $c_{\epsilon}$ is bounded, the equality \eqref{eq4.1} implies Theorem \ref{th1.4}. In case $c_{\epsilon}\rightarrow+\infty$, Lemma \ref{le4.6} holds. We shall construct a sequence of functions $\varphi_{\epsilon}\in W_{0}^{1,N}(\Omega)$ with $\|\varphi_{\epsilon}\|_{\gamma,p,F}=1$ such that
$$
\int_{\Omega}\frac{e^{\lambda_{N}(1-\frac{\beta}{N})\varphi_{\epsilon}^{\frac{N}{N-1}}}}{F^{o}(x)^{\beta}}\;\mathrm{d}x>\int_{\Omega}\frac{1}{F^{o}(x)^{\beta}}\;\mathrm{d}x+\frac{N\kappa_{N}}{N-\beta}e^{\lambda_{N}(1-\frac{\beta}{N})A_{0}+\sum_{k=1}^{N-1}\frac{1}{k}}.
$$
which is the contradiction with Lemma \ref{le4.6}. Therefore $c_{\epsilon}$ must be bounded and the proof of Theorem \ref{th1.4} is finished.

Define a sequence of functions on $\Omega$ by
\begin{align}
    \varphi_{\epsilon}(x)=
    \left\{\begin{array}{l}
    c+\frac{1}{c^{\frac{1}{N-1}}}\left(b-\frac{N-1}{\lambda_{N}(1-\frac{\beta}{N})}\log\left(1+(\frac{\kappa_{N}}{1-\frac{\beta}{N}})^{\frac{1}{N-1}}(\frac{F^{o}(x)}{\epsilon})^{\frac{N-\beta}{N-1}}\right)\right)\;\mathrm{in}\;\overline{W_{R\epsilon}},\\
    \frac{G-\eta\xi}{c^{\frac{1}{N-1}}}\;\;\;\;\;\;\;\;\;\;\mathrm{in}\;W_{2R\epsilon}\backslash\overline{W_{R\epsilon}},\\
    \frac{G}{c^{\frac{1}{N-1}}}\;\;\;\;\;\;\;\;\;\;\mathrm{in}\;\Omega\backslash W_{2R\epsilon},\end{array}\right.\notag
\end{align}
    where $G$ and $\xi$ are functions given in Lemma \ref{le4.5}, $R=(-\log\epsilon)^{1/(1-\frac{\beta}{N})}$, $b$ and $c$ are constants depending only on $\epsilon$ to be determined later, $\eta$ is a cut-off function
    in $C_{0}^{\infty}(W_{2R\epsilon})$ with $\eta=1$ on $W_{R\epsilon}$ and $F(\nabla\eta)\leq\frac{2}{R\epsilon}$. Clearly $W_{2R\epsilon}\subset\Omega$ provided  $\epsilon$ is sufficiently small.

    In order to assure  $\varphi_{\epsilon}\in W_{0}^{1,N}(\Omega)$, we set
    $$
    c+\frac{1}{c^{\frac{1}{N-1}}}\left(b-\frac{N-1}{\lambda_{N}(1-\frac{\beta}{N})}\log(1+(\frac{\kappa_{N}}{1-\frac{\beta}{N}})^{\frac{1}{N-1}}R^{\frac{N-\beta}{N-1}})\right)=\frac{1}{c^{\frac{1}{N-1}}}(-\frac{N}{\lambda_{N}}\log(R\epsilon)+A_{0}),
    $$
    which implies
    \begin{align}\label{eq5.1}
        c^{\frac{N}{N-1}}=-\frac{N}{\lambda_{N}}\log\epsilon+A_{0}-b+\frac{1}{\lambda_{N}(1-\frac{\beta}{N})}\log\frac{\kappa_{N}}{1-\frac{\beta}{N}}+O(\log R^{-1}).
    \end{align}
    and $\varphi_{\epsilon}$ is continuous at any point $x\in\partial W_{R\epsilon}$.

    Since $\xi\in C^{1}(\overline{\Omega})$ and $\xi(x)=O(F^{o}(x))$ as $x\rightarrow0$, we have $F(\nabla(\eta\xi))=O(1)$ as $\epsilon\rightarrow0$. Moreover, $F(\nabla G)=O(F^{o}(x)^{-1})$ as $x\rightarrow0$. Hence,
    \begin{align}\label{eq5.2}
        &\int_{\Omega\backslash W_{R\epsilon}}F(\nabla\varphi_{\epsilon})^{N}\;\mathrm{d}x\notag\\
        =&\frac{1}{c^{\frac{N}{N-1}}}\left(\int_{\Omega\backslash W_{2R\epsilon}}F(\nabla G)^{N}\;\mathrm{d}x+\int_{W_{2R\epsilon}\backslash W_{R\epsilon}}F(\nabla(G-\eta\xi))^{N}\;\mathrm{d}x\right)\notag\\
        =&\frac{1}{c^{\frac{N}{N-1}}}\left(\int_{\Omega\backslash W_{2R\epsilon}}F(\nabla G)^{N}\;\mathrm{d}x+\int_{W_{2R\epsilon}\backslash W_{R\epsilon}}F(\nabla G)^{N}(1+O(F^{o}(x)^{N}))\;\mathrm{d}x\right)\notag\\
        =&\frac{1}{c^{\frac{N}{N-1}}}\left(\int_{\Omega\backslash W_{R\epsilon}}F(\nabla G)^{N}\;\mathrm{d}x+\int_{W_{2R\epsilon}\backslash W_{R\epsilon}}O(1)\;\mathrm{d}x\right)\notag\\
        =&\frac{1}{c^{\frac{N}{N-1}}}\left(\gamma\|G\|_{p}^{N}-\frac{N}{\lambda_{N}}\log(R\epsilon)+A_{0}+O(R\epsilon)+O((R\epsilon)^{N})\right)
    \end{align}
    and
    \begin{align}\label{eq5.3}
        &\int_{\Omega\backslash W_{R\epsilon}}\lvert\varphi_{\epsilon}\rvert^{p}\;\mathrm{d}x\notag\\
        =&\frac{1}{c^{\frac{N}{N-1}}}\left(\int_{\Omega\backslash W_{2R\epsilon}}\lvert G\rvert^{p}\;\mathrm{d}x+\int_{W_{2R\epsilon\backslash W_{R\epsilon}}}\lvert G-\eta\xi\rvert^{p}\;\mathrm{d}x\right)\notag\\
        =&\frac{1}{c^{\frac{N}{N-1}}}\left(\int_{\Omega\backslash W_{2R\epsilon}}\lvert G\rvert^{p}\;\mathrm{d}x+\int_{W_{2R\epsilon\backslash W_{R\epsilon}}}\lvert G\rvert^{p}(1+O((\frac{F^{o}(x)^{}}{F^{o}(x)+\log(F^{o}(x)^{-1})}))^{p})\;\mathrm{d}x\right)\notag\\
        =&\frac{1}{c^{\frac{N}{N-1}}}\left(\int_{\Omega\backslash W_{R\epsilon}}\lvert G\rvert^{p}\;\mathrm{d}x+\int_{W_{2R\epsilon\backslash W_{R\epsilon}}}O(F^{o}(x)^{p})\;\mathrm{d}x\right)\notag\\
        =&\frac{1}{c^{\frac{N}{N-1}}}\left(\|G\|_{p}^{p}+O(R\epsilon)+O((R\epsilon)^{N+p})\right).
    \end{align}
    By Lemma \ref{le2.1} and the change in variable $t=(\frac{\kappa_{N}}{1-\frac{\beta}{N}})^{\frac{1}{N-1}}(\frac{r}{\epsilon})^{\frac{N-\beta}{N-1}}$,
    \begin{align}\label{eq5.4}
        &\int_{W_{R\epsilon}}F(\nabla\varphi_{\epsilon})^{N}\;\mathrm{d}x\notag\\
        =&(\frac{1}{c^{\frac{1}{N-1}}}\frac{N}{\lambda_{N}})^{N}\int_{W_{R\epsilon}}(\frac{F^{o}(x)^{\frac{N-\beta}{N-1}-1}}{\epsilon^{\frac{N-\beta}{N-1}}(\frac{\kappa_{N}}{1-\frac{\beta}{N}})^{-\frac{1}{N-1}}+F^{o}(x)^{\frac{N-\beta}{N-1}}})^{N}\;\mathrm{d}x\notag\\
        =&(\frac{1}{c^{\frac{1}{N-1}}}\frac{N}{\lambda_{N}})^{N}\int_{0}^{R\epsilon}(\frac{r^{\frac{N-\beta}{N-1}-1}}{\epsilon^{\frac{N-\beta}{N-1}}(\frac{\kappa_{N}}{1-\frac{\beta}{N}})^{-\frac{1}{N-1}}+r^{\frac{N-\beta}{N-1}}})^{N}\int_{\partial W_{r}}\frac{1}{\lvert\nabla F^{o}(x)\rvert}\;\mathrm{d}\sigma\;\mathrm{d}r\notag\\
        =&\frac{1}{c^{\frac{N}{N-1}}}\frac{N}{\lambda_{N}}\int_{0}^{R\epsilon}\frac{r^{\frac{N-\beta}{N-1}N-1}}{\left(\epsilon^{\frac{N-\beta}{N-1}}(\frac{\kappa_{N}}{1-\frac{\beta}{N}})^{-\frac{1}{N-1}}+r^{\frac{N-\beta}{N-1}}\right)^{N}}\mathrm{d}r\notag\\
        =&\frac{1}{c^{\frac{N}{N-1}}}\frac{N-1}{\lambda_{N}(1-\frac{\beta}{N})}\int_{0}^{(\frac{\kappa_{N}}{1-\frac{\beta}{N}})^{\frac{1}{N-1}}R^{\frac{N-\beta}{N-1}}}\frac{t^{N-1}}{(1+t)^{N}}\;\mathrm{d}t\notag\\
        =&\frac{1}{c^{\frac{N}{N-1}}}\frac{N-1}{\lambda_{N}(1-\frac{\beta}{N})}\left(\log\left(1+(\frac{\kappa_{N}}{1-\frac{\beta}{N}})^{\frac{1}{N-1}}R^{\frac{N-\beta}{N-1}}\right)-\sum_{k=1}^{N-1}\frac{1}{k}(\frac{(\frac{\kappa_{N}}{1-\frac{\beta}{N}})^{\frac{1}{N-1}}R^{\frac{N-\beta}{N-1}}}{1+(\frac{\kappa_{N}}{1-\frac{\beta}{N}})^{\frac{1}{N-1}}R^{\frac{N-\beta}{N-1}}})^{k}\right)\notag\\
        =&\frac{1}{c^{\frac{N}{N-1}}}\left(\frac{N}{\lambda_{N}}\log R+\frac{1}{\lambda_{N}(1-\frac{\beta}{N})}\log\frac{\kappa_{N}}{1-\frac{\beta}{N}}-\frac{N-1}{\lambda_{N}(1-\frac{\beta}{N})}\sum_{k=1}^{N-1}\frac{1}{k}+O(\log R^{-1})\right).
    \end{align}
    Meanwhile, by \eqref{eq5.1},
    \begin{align}
        0\leq\varphi_{\epsilon}&\leq\frac{1}{c^{\frac{1}{N-1}}}\left(-\frac{N}{\lambda_{N}}\log R\epsilon+A_{0}+\frac{N-1}{\lambda_{N}(1-\frac{\beta}{N})}\log(1+(\frac{\kappa_{N}}{1-\frac{\beta}{N}})^{\frac{1}{N-1}}R^{\frac{N-\beta}{N-1}})\right)\notag\\
        &=O(\log(R\epsilon)^{-1})\;\mathrm{on}\;W_{R\epsilon}.\notag
    \end{align}
    So,
    \begin{align}\label{eq5.5}
        \int_{W_{R\epsilon}}\lvert\varphi_{\epsilon}\rvert^{p}\;\mathrm{d}x=O((R\epsilon)^{N}(\log(R\epsilon)^{-1})^{p})
    \end{align}
    Therefore, combining \eqref{eq5.2}-\eqref{eq5.5} together, we have
    \begin{align}\label{eq5.6}
        \|\varphi_{\epsilon}\|_{\gamma,p,F}=\frac{1}{c^{\frac{N}{N-1}}}&\left(-\frac{N}{\lambda_{N}}\log\epsilon+\frac{1}{\lambda_{N}(1-\frac{\beta}{N})}\log\frac{\kappa_{N}}{1-\frac{\beta}{N}}-\frac{N-1}{\lambda_{N}(1-\frac{\beta}{N})}\sum_{k=1}^{N-1}\frac{1}{k}+A_{0}+o_{\epsilon}(1)\right),
    \end{align}
    The term in bracket is positive for $\epsilon>0$ small enough. Hence, we can choose $c$ such that $\|\varphi_{\epsilon}\|_{\gamma,p,F}=1$, i.e.,
    \begin{align}\label{eq5.7}
        c^{\frac{N}{N-1}}=-\frac{N}{\lambda_{N}}\log\epsilon+\frac{1}{\lambda_{N}(1-\frac{\beta}{N})}\log\frac{\kappa_{N}}{1-\frac{\beta}{N}}-\frac{N-1}{\lambda_{N}(1-\frac{\beta}{N})}\sum_{k=1}^{N-1}\frac{1}{k}+A_{0}+o_{\epsilon}(1).
    \end{align}
    Combining \eqref{eq5.1} and \eqref{eq5.7} together, we have
    \begin{align}\label{eq5.8}
        b=\frac{N-1}{\lambda_{N}(1-\frac{\beta}{N})}\sum_{k=1}^{N-1}\frac{1}{k}+o_{\epsilon}(1).
    \end{align}
    Now we estimate $\int_{\Omega}\frac{e^{\lambda_{N}(1-\frac{\beta}{N})\varphi_{\epsilon}^{N/(N-1)}}}{F^{o}(x)^{\beta}}\;\mathrm{d}x$. By \eqref{eq5.7} and $F^{o}(x)<R\epsilon$ on $W_{R\epsilon}$,
    \begin{align}
        \frac{b-\frac{N-1}{\lambda_{N}(1-\frac{\beta}{N})}\log\left(1+(\frac{\kappa_{N}}{1-\frac{\beta}{N}})^{\frac{1}{N-1}}(\frac{F^{o}(x)}{\epsilon})^{\frac{N-\beta}{N-1}}\right)}{c^{\frac{N}{N-1}}}>-1\notag
    \end{align}
    for $\epsilon>0$ small enough. Then by \eqref{eq5.7}, \eqref{eq5.8} and the inequality $(1+t)^{m}\geq1+mt$ for $t\geq-1$ and $m\geq1$,
    \begin{align}\label{eq5.9}
        \varphi_{\epsilon}^{\frac{N}{N-1}}\geq& c^{\frac{N}{N-1}}+\frac{N}{N-1}\left(b-\frac{N-1}{\lambda_{N}(1-\frac{\beta}{N})}\log\left(1+(\frac{\kappa_{N}}{1-\frac{\beta}{N}})^{\frac{1}{N-1}}(\frac{F^{o}(x)}{\epsilon})^{\frac{N-\beta}{N-1}}\right)\right)\notag\\
        =&-\frac{N}{\lambda_{N}}\log\epsilon+\frac{1}{\lambda_{N}(1-\frac{\beta}{N})}\log\frac{\kappa_{N}}{1-\frac{\beta}{N}}+\frac{1}{\lambda_{N}(1-\frac{\beta}{N})}\sum_{k=1}^{N-1}\frac{1}{k}+A_{0}\notag\\
        &-\frac{N}{\lambda_{N}(1-\frac{\beta}{N})}\log\left(1+(\frac{\kappa_{N}}{1-\frac{\beta}{N}})^{\frac{1}{N-1}}(\frac{F^{o}(x)}{\epsilon})^{\frac{N-\beta}{N-1}}\right)+o_{\epsilon}(1)
    \end{align}
    on $W_{R\epsilon}$. Notice that by Lemma \ref{le2.1} and the change in variable $t=(\frac{\kappa_{N}}{1-\frac{\beta}{N}})^{\frac{1}{N-1}}(\frac{r}{\epsilon})^{\frac{N-\beta}{N-1}}$,
    \begin{align}\label{eq5.10}
        &\int_{W_{R\epsilon}}\frac{1}{(1+(\frac{\kappa_{N}}{1-\frac{\beta}{N}})^{\frac{1}{N-1}}(\frac{F^{o}(x)}{\epsilon})^{\frac{N-\beta}{N-1}})^{N}F^{o}(x)^{\beta}}\;\mathrm{d}x\notag\\
        =&\int_{0}^{R\epsilon}\frac{1}{(1+(\frac{\kappa_{N}}{1-\frac{\beta}{N}})^{\frac{1}{N-1}}(\frac{r}{\epsilon})^{\frac{N-\beta}{N-1}})^{N}r^{\beta}}\int_{\partial W_{r}}\frac{1}{\lvert\nabla F^{o}(x)\rvert}\;\mathrm{d}\sigma\;\mathrm{d}r\notag\\
        =&(N-1)\epsilon^{N-\beta}\int_{0}^{(\frac{\kappa_{N}}{1-\frac{\beta}{N}})^{\frac{1}{N-1}}R^{\frac{N-\beta}{N-1}}}\frac{t^{N-2}}{(1+t)^{N}}\;\mathrm{d}t\notag\\
        =&\epsilon^{N-\beta}\left(\int_{0}^{(\frac{\kappa_{N}}{1-\frac{\beta}{N}})^{\frac{1}{N-1}}R^{\frac{N-\beta}{N-1}}}\frac{(N-2)t^{N-3}}{(1+t)^{N-1}}\;\mathrm{d}t-\left.\frac{t^{N-2}}{(1+t)^{N-1}}\right\rvert_{0}^{(\frac{\kappa_{N}}{1-\frac{\beta}{N}})^{\frac{1}{N-1}}R^{\frac{N-\beta}{N-1}}}\right)\notag\\
        =&\epsilon^{N-\beta}\int_{0}^{(\frac{\kappa_{N}}{1-\frac{\beta}{N}})^{\frac{1}{N-1}}R^{\frac{N-\beta}{N-1}}}\frac{1}{(1+t)^{2}}\;\mathrm{d}t+O(R^{-\frac{N-\beta}{N-1}})\notag\\
        =&\epsilon^{N-\beta}+O(R^{-\frac{N-\beta}{N-1}}).
    \end{align}
    Combining \eqref{eq5.9} and \eqref{eq5.10} together, we have
    \begin{align}\label{eq5.11}
        \int_{W_{R\epsilon}}\frac{e^{\lambda_{N}(1-\frac{\beta}{N})\varphi_{\epsilon}^{\frac{N}{N-1}}}}{F^{o}(x)^{\beta}}\;\mathrm{d}x\geq&\frac{\kappa_{N}}{(1-\frac{\beta}{N})\epsilon^{N-\beta}}e^{\sum_{k=1}^{N-1}\frac{1}{k}+\lambda_{N}(1-\frac{\beta}{N})A_{0}+o_{\epsilon}(1)}\notag\\
        &\cdot\int_{W_{R\epsilon}}\frac{1}{(1+(\frac{\kappa_{N}}{1-\frac{\beta}{N}})^{\frac{1}{N-1}}(\frac{F^{o}(x)}{\epsilon})^{\frac{N-\beta}{N-1}})^{N}F^{o}(x)^{\beta}}\;\mathrm{d}x\notag\\
        &=\frac{\kappa_{N}}{(1-\frac{\beta}{N})}e^{\sum_{k=1}^{N-1}\frac{1}{k}+\lambda_{N}(1-\frac{\beta}{N})A_{0}+o_{\epsilon}(1)}+O(R^{-\frac{N-\beta}{N-1}}).
    \end{align}
    On the other hand, by \eqref{eq5.7} and the inequality $e^{t}\geq1+\frac{t^{N-1}}{(N-1)!}$,
    \begin{align}\label{eq5.12}
        &\int_{\Omega\backslash W_{R\epsilon}}\frac{e^{\lambda_{N}(1-\frac{\beta}{N})\varphi_{\epsilon}^{\frac{N}{N-1}}}}{F^{o}(x)^{\beta}}\;\mathrm{d}x\notag\\
        \geq&\int_{\Omega\backslash W_{2R\epsilon}}\frac{1}{F^{o}(x)^{\beta}}\;\mathrm{d}x+\frac{\lambda_{N}^{N-1}(1-\frac{\beta}{N})^{N-1}}{(N-1)!c^{\frac{N}{N-1}}}\int_{\Omega\backslash W_{2R\epsilon}}\frac{G^{N}}{F^{o}(x)^{\beta}}\;\mathrm{d}x\notag\\
        =&\int_{\Omega}\frac{1}{F^{o}(x)^{\beta}}\;\mathrm{d}x+\frac{\lambda_{N}^{N-1}(1-\frac{\beta}{N})^{N-1}}{(N-1)!c^{\frac{N}{N-1}}}\int_{\Omega}\frac{G^{N}}{F^{o}(x)^{\beta}}\;\mathrm{d}x+O((R\epsilon)^{N-\beta}).
    \end{align}
    Therefore, by \eqref{eq5.11} and \eqref{eq5.12}, we conclude
    $$
    \int_{\Omega}\frac{e^{\lambda_{N}(1-\frac{\beta}{N})\varphi_{\epsilon}^{\frac{N}{N-1}}}}{F^{o}(x)^{\beta}}\;\mathrm{d}x>\int_{\Omega}\frac{1}{F^{o}(x)^{\beta}}\;\mathrm{d}x+\frac{N\kappa_{N}}{N-\beta}e^{\lambda_{N}(1-\frac{\beta}{N})A_{0}+\sum_{k=1}^{N-1}\frac{1}{k}}
    $$
    for $\epsilon>0$ sufficiently small.

\def\refname{References }

\end{document}